\newtheorem{thm}{Theorem}[section]
\newtheorem{cor}[thm]{Corollary}
\newtheorem{lem}[thm]{Lemma}
\newtheorem{prop}[thm]{Proposition}
\newtheorem{rem}[thm]{Remark}
\newtheorem{exam}[thm]{Example}
\numberwithin{equation}{section}
\title{Braided Morita equivalence for finite-dimensional semisimple and cosemisimple Hopf algebras
\centerline{\small \it Dedicated to Professor Masahiko Suzuki on the occasion of his 65th birthday}}
\author{Michihisa \textsc{Wakui}\\ 
{\footnotesize  Department of Mathematics, Faculty of Engineering Science,}\\ 
{\footnotesize  Kansai University, Suita-shi, Osaka 564-8680, Japan}\\
{\footnotesize  e-mail address: wakui@kansai-u.ac.jp}\\ 
}
\date{}
\begin{document}

\maketitle 
\begin{abstract}
Braided Morita invariants of finite-dimensional semisimple and cosemisimple Hopf algebras with braidings are constructed by refining the polynomial invariants introduced by the author. 
The invariants are computed for the duals of Suzuki's braided Hopf algebras, 
and as an application of that, the braided Morita equivalence classes over the $8$-dimensional Kac-Paljutkin algebra are determined. 
This paper also includes the modified results and proofs on determination of the coribbon elements of Suzuki's braided Hopf algebras, that are discussed and given in \cite{W2}. 
\end{abstract}

\baselineskip 15pt 

\section{Introduction}
On the classification of Hopf algebras over a field $\boldsymbol{k}$ two problems are now actively progressed. 
One is the classification up to isomorphism under some restriction like dimension fixed, or semisimple, or pointed. 
Another is the classification up to monoidal Morita equivalence, that is based on a categorical point of view. 
Two Hopf algebras $A$ and $B$ are called a $\boldsymbol{k}$-linear monoidal Morita equivalent if their module categories ${}_A\mathbb{M}$ and ${}_B\mathbb{M}$ are equivalent as $\boldsymbol{k}$-linear monoidal categories. 
In this paper we take the later stance, and consider some classification problem on quasitriangular Hopf algebras, namely, Hopf algebras with braiding structures. 
A braiding structure on a Hopf algebra $A$ is determined by some element $R\in A\otimes A$ called a universal $R$-matrix, which is introduced by Drinfeld \cite{Dri}. 
We write $c^R$ for the braiding structure and ${}_{(A,R)}\mathbb{M}$ for the braided monoidal category  $({}_A\mathbb{M}, c^R)$. 
Two quasitriangular Hopf algebras $(A, R)$ and $(B, R^{\prime})$ are called braided Morita equivalent 
if the braided categories ${}_{(A,R)}\mathbb{M}$ and ${}_{(B,R^{\prime})}\mathbb{M}$ are equivalent as 
$\boldsymbol{k}$-linear braided monoidal categories. 
There are a few results of classification of quasitriangular Hopf algebras up to braided Morita equivalence \cite{GMN, NN}. 
\par 
The eigenvalues of $S$-matrices and the Brauer groups in a braided monoidal category are well-known as braided Morita invariants \cite{KrillovJr, VanOZ}. 
In \cite{W} the author introduced some monoidal Morita invariant of semisimple and cosemisimple Hopf algebras of finite dimension. 
It is given as a polynomial in one variable, which constructed from the data of the braidings and the absolutely simple modules. 
By refining the invariant on braidings we have braided Morita invariants of semisimple and cosemisimple quasitriangular Hopf algebras of finite dimension. 
In this paper we compute these braided Morita invariants for the duals of Suzuki's braided Hopf algebras \cite{Suzuki2}, which fit into a Hopf algebra extension 
$1 \longrightarrow (\boldsymbol{k}C_2)^{\ast} \longrightarrow K \longrightarrow \boldsymbol{k}D_{2L} \longrightarrow 1$, where $C_2$ is the cyclic group of order $2$, and $D_{2L}$ is the dihedral group of order $2L$. 
In particular, the $8$-dimensional Kac-Paljutkin algebra \cite{KP, Masuoka0}, denoted by $H_8$, is contained in the family of Suzuki's Hopf algebras. 
As an application of the computation results of our polynomial invariants, we determine the braided Morita equivalence classes over $H_8$. 
\par 
In closely connection with the above consideration, the coribbon elements of Suzuki's braided Hopf algebras are determined.
Actually, although they have studied in \cite{W2} by the author, 
the proof of Lemma 8 and the statement of Theorem 5  in \cite{W2} contain several mistakes. 
I noticed them by a detailed note \cite{Sommer} sent from Sommerh\"{a}user. 
We modify arguments  in \cite{W2} and show the correct results on that with thanks to him. 
Another proof of the revised version of Theorem 5  in \cite{W2} 
is also given by using the spherical structures of Suzuki's Hopf algebras. 
\par 
This paper is organized as follows. 
In Section 2 we review the definition of (co)ribbon Hopf algebras, 
and introduce braided Morita invariants of semisimple and cosemisimple quasitriangular Hopf algebras of finite dimension. 
In Section 3  we review the definition of Suzuki's braided Hopf algebras and some basic results on that obtained by Satoshi Suzuki \cite{Suzuki2}. 
We give the revised results on determination of the coribbon elements of Suzuki's braided Hopf algebras. 
In Section 4 we compute the polynomial invariants defined in Section 2 for the duals of Suzuki's braided Hopf algebras. 
In the final section we compute the Hopf algebra automorphism group for $H_8$, 
and determine the braided Morita equivalence classes of $H_8$. 
In Appendix we give a list of corrigenda in my paper \cite{W2}. 
\par 
Throughout this paper $\boldsymbol{k}$ denotes a field. 
For a bialgebra or a Hopf algebra $A$, denoted by $\Delta $, $\varepsilon $ and $S$ the comultiplication, the counit and the antipode of $A$, respectively. 
We use Sweedler's notation such as $\Delta (x)=\sum x_{(1)}\otimes x_{(2)}$ for $x\in A$. 
For general facts on Hopf algebras or monoidal categories, refer to Montgomery's book \cite{MontgomeryBook} and Kassel's book \cite{Kassel}. 

\par \noindent 
{\bf Acknowledgments}.  
I express my sincere gratitude to Professor Yorck Sommerh\"{a}user for careful reading  my paper \cite{W2} and for  reminding me that there are incorrect descriptions in it.  
I would like to thank Professor Hiroyuki Yamane for giving an opportunity to speak in this conference and write a paper in the proceedings. 
I would also like to thank the referee for helpful comments on improving this paper.

\section{Braided Morita invariants of quasitriangular Hopf algebras}
\subsection{Definitions of braided and coribbon Hopf algebras} 
The notion of a quasitriangular bialgebra or a quasitriangular Hopf algebra is introduced by Drinfeld \cite{Dri}. 
It is a pair of a bialgebra or a Hopf algebra $A$ over $\boldsymbol{k}$ and an invertible element $R\in A\otimes A$ satisfying some suitable conditions. 
Such an $R$ is called a universal $R$-matrix of $A$. 

\par 
\begin{lem}[{\bf Drinfeld\cite{Dri}, Radford\cite{R}}]\label{properties_DE}
Let $(A,R)$ be a quasitriangular Hopf algebra. Then
\par 
$(1)$ the antipode $S$ is bijective,  
\par 
$(2)$ $R^{-1}=(S\otimes \text{id})(R)$, 
\par 
$(3)$ $R=(S\otimes S)(R)$, 
\par 
$(4)$ $(\varepsilon \otimes \text{id})(R)=1=(\text{id}\otimes \varepsilon )(R)$. 
\par 
Furthermore, if we write $R$ in the form 
$R=\sum R^{(1)}\otimes R^{(2)}$, 
and set $u:=\sum S(R^{(2)})R^{(1)}  \in A$, 
then the following conditions are satisfied. 
\begin{enumerate}
\itemindent=1cm 
\item[$(DE1)$]  $u$ is invertible, and $S^2(a)=uau^{-1}$ for all $a\in A$, 
\item[$(DE2)$] $\Delta (u)=(u\otimes u)(R_{21}R)^{-1}=(R_{21}R)^{-1}(u\otimes u)$,
\item[$(DE3)$] $\varepsilon (u)=1$,
\item[$(DE4)$] $u^{-1}=\sum  R^{(2)}S^2(R^{(1)})$. 
\end{enumerate}
Here, $R_{21}=\sum  R^{(2)}\otimes R^{(1)}$. 
The element $u$ is called the Drinfeld element of $(A, R)$. 
\qed 
\end{lem}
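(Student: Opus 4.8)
The plan is to derive everything from the three defining axioms of quasitriangularity, which I will write (with $R=\sum R^{(1)}\otimes R^{(2)}$) as: (QT1) $\Delta^{op}(a)R=R\Delta(a)$ for all $a\in A$; (QT2) $(\Delta\otimes\text{id})(R)=R_{13}R_{23}$; and (QT3) $(\text{id}\otimes\Delta)(R)=R_{13}R_{12}$. The overall strategy is to obtain the ``easy'' identities $(4)$ and $(2)$ first, by feeding the counit and the antipode into the hexagon axioms (QT2) and (QT3); then to build the Drinfeld element $u$ and extract $(1)$ together with $(DE1)$--$(DE4)$ from it; and finally to read off $(3)$ as the companion of $(2)$.

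First I would prove $(4)$. Applying $\varepsilon$ to the leg produced by $\Delta$ in (QT2) and using $(\varepsilon\otimes\text{id})\Delta=\text{id}$ collapses the left-hand side to $R$ and the right-hand side to $\big(1\otimes(\varepsilon\otimes\text{id})(R)\big)R$; since $R$ is invertible this forces $(\varepsilon\otimes\text{id})(R)=1$, and the symmetric manipulation of (QT3) gives $(\text{id}\otimes\varepsilon)(R)=1$. Next, for $(2)$, applying $m\circ(S\otimes\text{id})$ to the first two legs of (QT2) turns the left-hand side into $\sum\varepsilon(R^{(1)})\otimes R^{(2)}=1\otimes1$, by the antipode axiom together with $(4)$, while the right-hand side becomes exactly $(S\otimes\text{id})(R)\cdot R$; hence $(S\otimes\text{id})(R)=R^{-1}$.

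The heart of the argument is the Drinfeld element. With $u=\sum S(R^{(2)})R^{(1)}$ I would: (i) prove the commutation law $u\,h=S^2(h)\,u$ for all $h\in A$ by a direct computation in Sweedler notation, rewriting $\Delta$ and $\Delta^{op}$ through (QT1) and collapsing repeatedly with the antipode axioms; (ii) verify, again by direct computation from the hexagon axioms and $(2)$, that $\bar u:=\sum R^{(2)}S^2(R^{(1)})$ is a two-sided inverse of $u$, which simultaneously establishes invertibility, yields $(DE4)$, and upgrades (i) to $S^2(h)=uhu^{-1}$, i.e. $(DE1)$. Since $S^2$ is then an inner automorphism it is bijective, and $S^2$ bijective forces $S$ bijective, giving $(1)$ (in the finite-dimensional setting one may instead quote Larson--Sweedler). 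Identity $(DE3)$ then follows by applying $\varepsilon$ to $u$ and using $\varepsilon\circ S=\varepsilon$ with $(4)$; and $(3)$ follows by combining $(2)$ with its mirror $(\text{id}\otimes S^{-1})(R)=R^{-1}$ --- the analogue of $(2)$ obtained from (QT3), now legitimate because $S^{-1}$ exists --- to conclude $(S\otimes S)(R)=R$.

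The main obstacle is $(DE2)$. I would compute $\Delta(u)=\sum\Delta(S(R^{(2)}))\Delta(R^{(1)})$ using $\Delta\circ S=(S\otimes S)\circ\Delta^{op}$ and both hexagon axioms, steering the resulting tangle of four tensor legs until it reorganizes as $(u\otimes u)(R_{21}R)^{-1}$; the second form $(R_{21}R)^{-1}(u\otimes u)$ then follows because $R_{21}R$ commutes with $\Delta(u)$ --- indeed with all of $\Delta(A)$, by (QT1) --- which forces $u\otimes u$ to commute with $R_{21}R$. This coproduct bookkeeping, together with the discipline of not invoking $S^{-1}$ until bijectivity has actually been secured, is where the real care is needed; the remaining identities are short once $(2)$, $(4)$, and the invertibility of $u$ are in hand.
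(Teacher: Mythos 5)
The paper never proves this lemma: it is quoted as a known result of Drinfeld and Radford, stated with a \qed and the proof outsourced to \cite{Dri} and \cite{R}. So your proposal is not competing with an in-paper argument but reconstructing the classical one, and it does so correctly and in a sound order. The derivation of $(4)$ and $(2)$ by feeding $\varepsilon$ and $m\circ(S\otimes\text{id})$ into the hexagon axioms is exactly right (invertibility of $R$, which is part of the definition, is what turns the one-sided identities into equalities with $R^{-1}$); the construction of $\bar u=\sum R^{(2)}S^2(R^{(1)})$ as an explicit two-sided inverse correctly yields $(DE4)$, upgrades the commutation law to $(DE1)$, and gives $(1)$ since $S^2$ inner forces $S$ injective and surjective; and your deduction of $(3)$ from the mirror identity $(\text{id}\otimes S^{-1})(R)=R^{-1}$ is legitimate precisely because you postpone it until $S^{-1}$ exists (the mirror computation needs the antipode property of $S^{-1}$ in $A^{\mathrm{cop}}$, namely $\sum S^{-1}(x_{(2)})x_{(1)}=\varepsilon(x)1$). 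Your argument for the second equality in $(DE2)$ is also correct: flipping (QT1) gives $\Delta(a)R_{21}=R_{21}\Delta^{op}(a)$, hence $R_{21}R$ commutes with all of $\Delta(A)$, and applying this to $\Delta(u)=(u\otimes u)(R_{21}R)^{-1}$ forces $u\otimes u$ to commute with $R_{21}R$. The only caveat is that the two computational cores --- the law $uh=S^2(h)u$ and the identity $\Delta(u)=(u\otimes u)(R_{21}R)^{-1}$ --- are invoked as ``direct computations'' rather than carried out; these are the genuinely laborious steps, so as written you have a correct, well-ordered outline (matching the treatment in Kassel's and Montgomery's books) rather than a complete proof, but no step would fail when the Sweedler-notation details are filled in.
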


\par 
An element $v\in A$ is called a \textit{ribbon element} of a quasitriangular bialgebra $(A, R)$
 and the triplet $(A, R, v)$ is called a \textit{ribbon bialgebra} \cite{RT} if the following conditions are satisfied: 
\begin{enumerate}
\itemindent=1cm 
\item[(Rib1)] $v\in Z(A)$, where $Z(A)$ denotes the center of $A$, 
\item[(Rib2)] $\Delta (v)=(v\otimes v)(R_{21}R)^{-1}$,
\item[(Rib3)] $\varepsilon (v)=1$. 
\end{enumerate} 
In the case where $(A, R)$ is a quasitriangular Hopf algebra, the condition 
\begin{enumerate}
\itemindent=1cm 
\item[(Rib4)] $S(v)=v$ 
\end{enumerate} 
is also required in addition to the above three conditions.  
Then, the triplet $(A, R, v)$ is called a \textit{ribbon Hopf algebra}. 
By definition any ribbon element $v$ is invertible, and 
if $A$ is of finite dimension, then the condition
\begin{enumerate}
\itemindent=1cm 
\item[(Rib0)] $v^2=uS(u)$
\end{enumerate} 
is automatically satisfied \cite{Yetter}, where $u$ is the Drinfeld element of $(A, R)$. 

\par 
A ribbon element is characterized by a special group-like element as follows \cite{Kauff}. 

\par 
\begin{lem}\label{4-1}
Let $(A,R)$ be a quasitriangular Hopf algebra over $\boldsymbol{k}$. 
For an element $v\in A$ the following conditions (1) and (2) are equivalent. 
\par 
$(1)$ $v$ is a ribbon element of $(A,R)$. 
\par 
$(2)$ there is an element $g\in G(A)$ such that 
\par \smallskip \centerline{$\textrm{(a)}\ v=g^{-1}u,
\quad 
\textrm{(b)}\ S(u)=g^{-2}u, 
\quad 
\textrm{(c)}\ g^{-1}u\in Z(A). $}
\par \smallskip \noindent 
Here, $G(A)$ denotes the set of the group-like elements of $A$. \qed 
\end{lem}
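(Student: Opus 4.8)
The plan is to realize the required group-like element explicitly as $g:=v^{-1}u$, which is legitimate since any ribbon element is invertible, and to prove the two implications separately. Throughout I will use freely that a group-like element $g$ satisfies $\Delta(g)=g\otimes g$, $\varepsilon(g)=1$ and $S(g)=g^{-1}$, and the observation that centrality of $v$ forces $g$ to commute with $u$: from $g^{-1}u=v\in Z(A)$ one gets $gv=vg$, hence $u=g^{-1}ug$, i.e.\ $gu=ug$.

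For the implication $(1)\Rightarrow(2)$ I would first record that (a) is automatic, since $g^{-1}u=u^{-1}vu=v$ by centrality of $v$. To place $g$ in $G(A)$, I compute $\varepsilon(g)=\varepsilon(v)^{-1}\varepsilon(u)=1$ from (Rib3) and $(DE3)$, and then establish $\Delta(g)=g\otimes g$. For the latter I invert (Rib2) to obtain $\Delta(v^{-1})=(R_{21}R)(v^{-1}\otimes v^{-1})$ and combine it with the second form of $(DE2)$, namely $\Delta(u)=(R_{21}R)^{-1}(u\otimes u)$; because $v^{-1}\otimes v^{-1}$ is central in $A\otimes A$ it slides past $(R_{21}R)^{-1}$, the two factors $R_{21}R$ cancel, and what remains is $(v^{-1}\otimes v^{-1})(u\otimes u)=g\otimes g$. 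Condition (c) is then just (Rib1) read through (a). For (b) I apply $S$ to $v=g^{-1}u$, using $S(g^{-1})=g$, to get $v=S(v)=S(u)g$ from (Rib4); solving for $S(u)$ and using $gu=ug$ gives $S(u)=g^{-1}ug^{-1}=g^{-2}u$.

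For the converse $(2)\Rightarrow(1)$ I would verify the four ribbon axioms for $v=g^{-1}u$ directly. Axiom (Rib1) is precisely (c), and (Rib3) follows from $\varepsilon(v)=\varepsilon(g)^{-1}\varepsilon(u)=1$. For (Rib2) I use $\Delta(g^{-1})=g^{-1}\otimes g^{-1}$ together with $(DE2)$ to compute $\Delta(v)=(g^{-1}\otimes g^{-1})\Delta(u)=(g^{-1}u\otimes g^{-1}u)(R_{21}R)^{-1}=(v\otimes v)(R_{21}R)^{-1}$. Finally, (Rib4) comes from $S(v)=S(u)S(g^{-1})=S(u)g=g^{-2}ug=g^{-1}u=v$, where I have used (b), $S(g^{-1})=g$, and the commutation $gu=ug$ coming from (c).

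The step I expect to be the main obstacle is proving $\Delta(g)=g\otimes g$ in the first implication: this is the only point where (Rib2) and the Drinfeld relation $(DE2)$ genuinely interact, and the cancellation of the two copies of $R_{21}R$ succeeds only because the centrality of $v$ allows the non-group-like factor $v^{-1}\otimes v^{-1}$ to be moved through $R_{21}R$. Once this is in hand, all remaining verifications are short manipulations reducing to the defining identities of group-like elements and to the commutation of $g$ with $u$.
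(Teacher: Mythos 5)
Your proof is correct: the paper itself states Lemma~\ref{4-1} without proof (it is quoted from Kauffman's paper \cite{Kauff} with a \qed), and your argument---taking $g:=v^{-1}u$, deriving group-likeness from the cancellation of $R_{21}R$ between the inverted (Rib2) and $(DE2)$, and exploiting the commutation $gu=ug$ forced by centrality of $g^{-1}u$---is precisely the standard way this equivalence is established. All the individual steps (the use of $S(g^{-1})=g$, the sliding of the central factor $v^{-1}\otimes v^{-1}$ past $(R_{21}R)^{-1}$, and both directions of the verification) check out, so your proposal supplies a complete proof of the cited result.
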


\par 
Although the Drinfeld element is not necessary to be a ribbon element, 
 in the semisimple and cosemisimple case the following holds. 

\par 
\begin{prop}[{\bf Gelaki \cite[Lemma 2.1.1]{G}}] \label{4-rib}
Let $(A,R)$ be a quasitriangular Hopf algebra over $\boldsymbol{k}$, and $u$ be its Drinfeld element. 
If $A$ is semisimple and cosemisimple, then 
$u\in Z(A)$ and $u=S(u)$. 
Therefore, the Drinfeld element $u$ of 
$(A,R)$ is a ribbon element of $(A,R)$. \qed 
\end{prop}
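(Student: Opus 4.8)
The plan is to reduce the whole statement to the single fact that $A$ being semisimple and cosemisimple forces $S^2=\mathrm{id}_A$, and then to extract both conclusions from the Drinfeld--Radford identities collected in Lemma~\ref{properties_DE}. First I would invoke the Larson--Radford theorem (in characteristic $0$) together with its extension by Etingof--Gelaki (in positive characteristic): a finite-dimensional Hopf algebra that is simultaneously semisimple and cosemisimple satisfies $S^2=\mathrm{id}_A$. Granting this, the centrality of $u$ is immediate, since condition $(DE1)$ gives $S^2(a)=uau^{-1}$ for every $a\in A$, and as the left-hand side is now $a$ we obtain $ua=au$ for all $a$, that is, $u\in Z(A)$. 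Because the antipode maps the center onto the center, $S(u)\in Z(A)$ as well.

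The substantial point is the equality $u=S(u)$. Here I would first show that the element $\ell:=uS(u)^{-1}$ is a group-like element of $A$. Using $(DE2)$, the identity $R=(S\otimes S)(R)$ from Lemma~\ref{properties_DE}(3), and the centrality of $u$ and $S(u)$, one computes $\Delta(S(u))=(R_{21}R)^{-1}(S(u)\otimes S(u))$ by applying $S\otimes S$ to the flipped form of $\Delta(u)$; combining this with $\Delta(u)=(R_{21}R)^{-1}(u\otimes u)$ and cancelling the central factor $R_{21}R$ yields $\Delta(\ell)=\ell\otimes\ell$, while $(DE3)$ and $\varepsilon\circ S=\varepsilon$ give $\varepsilon(\ell)=1$. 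Thus $\ell\in G(A)$, and in fact $\ell$ is central.

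The remaining and genuinely hard step is to prove $\ell=1$: the manipulations permitted by $S^2=\mathrm{id}_A$ alone do not force $R=R_{21}$, so one cannot expect $u=S(u)$ from formal identities in $R$ (indeed it holds already for Drinfeld doubles, where $R\neq R_{21}$). Instead I would appeal to Radford's description of the group-like element $uS(u)^{-1}$ in terms of the distinguished group-like elements of $A$ and of $A^{\ast}$. Semisimplicity makes $A$ unimodular, so the distinguished group-like element $\alpha\in G(A^{\ast})$ is trivial, $\alpha=\varepsilon$; cosemisimplicity makes $A^{\ast}$ unimodular, so the distinguished group-like element $g_A\in A$ is trivial. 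Feeding these into Radford's formula collapses it to $\ell=1$, that is, $S(u)=u$. I expect this group-like vanishing to be the main obstacle, since it is precisely the step where the full strength of both semisimplicity and cosemisimplicity---rather than merely $S^2=\mathrm{id}_A$---must be used.

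Finally I would assemble the ribbon conclusion. With $u\in Z(A)$ and $S(u)=u$ in hand, conditions (Rib1)--(Rib4) are verified directly: (Rib1) is centrality, (Rib2) is exactly the first equality in $(DE2)$, (Rib3) is $(DE3)$, and (Rib4) is the equality just proved; the automatic relation (Rib0) then reads $u^2=uS(u)=u^2$. Equivalently, one may apply Lemma~\ref{4-1} with $g=1$, for which (a) $v=u$, (b) $S(u)=u$, and (c) $u\in Z(A)$ all hold. Either way $u$ is a ribbon element of $(A,R)$.
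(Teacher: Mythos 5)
Your proof is correct. Note, however, that the paper itself offers no argument for this proposition: it is quoted verbatim from Gelaki's survey (Lemma 2.1.1 of \cite{G}) with the proof omitted, so there is no internal proof to compare against. What you have written is essentially a reconstruction of the standard argument behind that citation, and all three of your inputs are the right ones: the Larson--Radford theorem together with its positive-characteristic extension by Etingof--Gelaki (the latter being exactly the paper's reference \cite{EG}) to get $S^2=\mathrm{id}_A$ and hence $u\in Z(A)$ from $(DE1)$; the observation that $\ell=uS(u)^{-1}$ is a central group-like (your computation of $\Delta(S(u))=(R_{21}R)^{-1}(S(u)\otimes S(u))$ is right, though strictly speaking one cancels because $\ell\otimes\ell$ is central, not because $R_{21}R$ is); and Radford's formula from \cite{R} expressing $uS(u)^{-1}$ through the distinguished group-like elements of $A$ and $A^{\ast}$, which collapse to $1$ and $\varepsilon$ by unimodularity of $A$ and $A^{\ast}$ (consequences of semisimplicity and cosemisimplicity, respectively). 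You correctly identify this last step as the one where the hypotheses are genuinely used; the formal identities in $R$ alone cannot yield $S(u)=u$, as your (slightly garbled) Drinfeld-double remark suggests. The verification of (Rib1)--(Rib4), or equivalently the application of Lemma~\ref{4-1} with $g=1$, is routine and correct. One small point worth making explicit: finite-dimensionality, needed both for $S^2=\mathrm{id}_A$ and for Radford's formula, is automatic here since a semisimple Hopf algebra is necessarily finite-dimensional.
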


\par 
Using Lemma~\ref{4-1} and Proposition~\ref{4-rib} we have: 

\par 
\begin{prop}\label{4-3}
Let $(A, R)$ be a finite-dimensional quasitriangular Hopf algebra over $\boldsymbol{k}$, and $u$ be its Drinfeld element. 
If $A$ is semisimple and cosemisimple, then 
the set of all ribbon elements $\text{Rib}(A,R)$ is given by 
$\text{Rib}(A,R)=\{ \ gu\ \vert \ g\in G(A)\cap Z(A),\ g^2=1\ \} $. \qed 
\end{prop}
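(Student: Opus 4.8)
The plan is to start from the characterization of ribbon elements supplied by Lemma~\ref{4-1} and then simplify its three conditions (a)--(c) using the special features of the semisimple and cosemisimple case recorded in Proposition~\ref{4-rib}. By Lemma~\ref{4-1}, an element $v \in A$ is a ribbon element of $(A,R)$ if and only if there is a group-like element $g \in G(A)$ with $v = g^{-1}u$, $S(u) = g^{-2}u$, and $g^{-1}u \in Z(A)$. So it suffices to determine exactly which $g \in G(A)$ can occur, and to rewrite $g^{-1}u$ accordingly.

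First I would invoke Proposition~\ref{4-rib}, which gives $u \in Z(A)$ and $S(u) = u$ whenever $A$ is semisimple and cosemisimple. Substituting $S(u) = u$ into condition (b) yields $u = g^{-2}u$; since $u$ is invertible by (DE1), cancelling $u$ gives $g^{-2} = 1$, that is, $g^2 = 1$. Conversely $g^2 = 1$ forces (b) back, so in this setting condition (b) is equivalent to $g^2 = 1$.

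Next I would rewrite condition (c). Because $u$ is central and invertible, for every $a \in A$ one has $(g^{-1}u)a = (g^{-1}a)u$ and $a(g^{-1}u) = (ag^{-1})u$, so $g^{-1}u \in Z(A)$ if and only if $g^{-1}a = ag^{-1}$ for all $a$, that is, $g^{-1} \in Z(A)$; taking inverses of an invertible element (the center is closed under them) this is equivalent to $g \in Z(A)$. Hence conditions (b) and (c) together amount exactly to $g \in G(A) \cap Z(A)$ with $g^2 = 1$.

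Finally, when $g^2 = 1$ we have $g^{-1} = g$, so $v = g^{-1}u = gu$; and as $g$ ranges over the involutive central group-like elements, so does $g^{-1}$. Combining the equivalences gives $\text{Rib}(A,R) = \{\, gu \mid g \in G(A) \cap Z(A),\ g^2 = 1 \,\}$, as claimed. The argument is essentially bookkeeping once Lemma~\ref{4-1} and Proposition~\ref{4-rib} are in hand; the only point requiring care is ensuring that each reduction of (b) and (c) is a genuine equivalence rather than a one-way implication, which is precisely where the invertibility and centrality of $u$ are used.
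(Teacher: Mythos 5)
Your proof is correct and follows exactly the route the paper intends: the paper states Proposition~\ref{4-3} as an immediate consequence of Lemma~\ref{4-1} and Proposition~\ref{4-rib} (with the details left to the reader), and your argument supplies precisely those details --- reducing condition (b) to $g^2=1$ and condition (c) to $g\in Z(A)$ via the centrality and invertibility of $u$. Nothing is missing.
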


\par 
In order to know the ribbon elements of a finite-dimensional semisimple and cosemisimple quasitriangular Hopf algebra $(A, R)$,  
it is enough to determine the set 
$\text{Sph}(A): =\{\ g\in G(A)\cap Z(A)\ \vert \ g^2=1\ \}$ by Proposition~\ref{4-3}. 
\par  
Let us recall the definitions of braided Hopf algebras and coribbon Hopf algebras that are the dual notions of quasitriangular Hopf algebras and ribbon Hopf algebras, respectively. 
The former and the letter are introduced by Doi \cite{Doi} and Hayashi \cite{Hayashi, Hayashi2}, respectively. 
Let $A$ be a bialgebra $A$ over $\boldsymbol{k}$. 
A linear functional $\sigma : A\otimes A\longrightarrow \boldsymbol{k}$ is called a \textit{braiding} of $A$, if it is convolution-invertible, 
and the following conditions are satisfied: 
\begin{enumerate}\itemindent=1cm 
\item[(B1)] $\sum\sigma (x_{(1)},y_{(1)})x_{(2)}y_{(2)}=\sum\sigma (x_{(2)},y_{(2)})y_{(1)}x_{(1)}$,
\item[(B2)] $\sigma (xy,z)=\sum\sigma (x,z_{(1)})\sigma (y,z_{(2)})$, 
\item[(B3)] $\sigma (x,yz)=\sum \sigma (x_{(1)},z)\sigma (x_{(2)},y)$
\end{enumerate}
for all $x,y,z\in A$. 
The pair $(A,\sigma )$ is called a \textit{braided bialgebra}. 
In a braided bialgebra $(A,\sigma )$ the following equation holds: 
\begin{enumerate}\itemindent=1cm 
\item[(B4)] $\sigma (1_A, x)=\sigma (x, 1_A)=\varepsilon (x)$ for all $x\in A$. 
\end{enumerate}

\par 
An invertible element $\theta \in A^{\ast }$ is said to be a \textit{coribbon element} of a braided bialgebra $(A,\sigma )$ if the following conditions are satisfied: 
\begin{enumerate}\itemindent=1cm 
\item[(CR1)] $\sum \theta (x_{(1)})x_{(2)}=\sum \theta (x_{(2)})x_{(1)}$,
\item[(CR2)] $\theta (xy)=\sum \sigma ^{-1}(x_{(1)},y_{(1)})\theta (x_{(2)})\theta (y_{(2)})\sigma ^{-1}(y_{(3)},x_{(3)})$, 
\item[(CR3)] $\theta (1)=1$ 
\end{enumerate} 
 for all $x,y\in A$. The triplet $(A,\sigma ,\theta )$ is called  a \textit{coribbon bialgebra}. 
Furthermore, if $A$ is a Hopf algebra and the condition
\begin{enumerate}\itemindent=1cm 
\item[(CR4)] $\theta \circ S=\theta $ 
\end{enumerate} 
is satisfied, then the triplet $(A,\sigma ,\theta )$ is called a \textit{coribbon Hopf algebra}. 

\par 
\begin{rem}
If a Hopf algebra $A$ is of finite dimension, then a braiding $\sigma $ of $A$ is a universal $R$-matrix of  $A^{\ast}$ via 
the usual isomorphism $(A\otimes A)^{\ast}\cong A^{\ast}\otimes A^{\ast}$. 
This construction gives a one-to-one correspondence between the  braidings of $A$ and the universal $R$-matrices of  $A^{\ast}$. 
Furthermore, 
an element $\theta \in A^{\ast}$ is a coribbon element of a braided Hopf algebra $(A, \sigma )$ if and only if 
it is a ribbon element of the quasitriangular Hopf algebra $(A^{\ast}, \sigma )$. 
\end{rem}

Dualizing Proposition~\ref{4-3} we have: 

\par 
\begin{cor}\label{4-4}
Let $(A, \sigma )$ be a finite-dimensional braided Hopf algebra over $\boldsymbol{k}$, and $\varUpsilon \in A^{\ast}$ be its Drinfeld element: 
\par \smallskip \centerline{$\varUpsilon (a)=\sum \sigma (a_{(2)}, S(a_{(1)}))\qquad (a\in A).$}
\par  \smallskip 
If $A$ is semisimple and cosemisimple, then the Drinfeld element $\varUpsilon $ is a coribbon element of $(A, \sigma )$, and the set of all coribbon elements of $(A, \sigma )$, written by $\text{CRib}(A,\sigma )$, is given by 
\par \smallskip \centerline{\hspace{1.5cm} $\text{CRib}(A,\sigma )=\{ \ p\varUpsilon   \ \vert \ p\in G(A^{\ast})\cap Z(A^{\ast}),\ p^2=\varepsilon \ \} .$\qed }
\end{cor}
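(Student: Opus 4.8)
The plan is to deduce this statement from its quasitriangular counterpart, Proposition~\ref{4-3}, by passing to the dual Hopf algebra. By the Remark preceding the corollary, an element $\theta \in A^{\ast}$ is a coribbon element of the braided Hopf algebra $(A, \sigma )$ if and only if it is a ribbon element of the quasitriangular Hopf algebra $(A^{\ast}, \sigma )$, where $\sigma $ is viewed as a universal $R$-matrix of $A^{\ast}$ through the identification $(A\otimes A)^{\ast}\cong A^{\ast}\otimes A^{\ast}$. Hence $\text{CRib}(A,\sigma )=\text{Rib}(A^{\ast},\sigma )$, and it suffices to compute the right-hand side by applying Proposition~\ref{4-3} to $(A^{\ast},\sigma )$.

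To invoke Proposition~\ref{4-3} for $(A^{\ast},\sigma )$, I first note that $A^{\ast}$ is again finite-dimensional, semisimple and cosemisimple: for a finite-dimensional Hopf algebra these two properties are interchanged under taking the dual, so the hypotheses on $A$ supply exactly what is required for $A^{\ast}$. The essential computation is then to check that the stated functional $\varUpsilon $ coincides with the Drinfeld element of $(A^{\ast},\sigma )$ in the sense of Lemma~\ref{properties_DE}. Writing $\sigma =\sum \sigma ^{(1)}\otimes \sigma ^{(2)}\in A^{\ast}\otimes A^{\ast}$, the Drinfeld element of $(A^{\ast},\sigma )$ is $\sum S_{A^{\ast}}(\sigma ^{(2)})\,\sigma ^{(1)}$, where the product is the convolution of $A^{\ast}$ and $S_{A^{\ast}}$ is the antipode of $A^{\ast}$, determined by $S_{A^{\ast}}(f)=f\circ S$. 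Evaluating this functional on $a\in A$ and unwinding the convolution yields $\sum \sigma ^{(2)}(S(a_{(1)}))\,\sigma ^{(1)}(a_{(2)})=\sum \sigma (a_{(2)},S(a_{(1)}))$, which is precisely $\varUpsilon (a)$. Thus $\varUpsilon $ is the Drinfeld element of $(A^{\ast},\sigma )$, and in particular Proposition~\ref{4-rib} applied to $A^{\ast}$ already guarantees that $\varUpsilon $ is a coribbon element of $(A,\sigma )$, establishing the first assertion.

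With this identification in hand, Proposition~\ref{4-3} gives $\text{Rib}(A^{\ast},\sigma )=\{\ g\varUpsilon \ \vert \ g\in G(A^{\ast})\cap Z(A^{\ast}),\ g^2=1_{A^{\ast}}\ \}$. Since the unit of $A^{\ast}$ is the counit $\varepsilon $ of $A$ and the multiplication of $A^{\ast}$ is convolution (which is the product appearing in $p\varUpsilon $), this set is exactly $\{\ p\varUpsilon \ \vert \ p\in G(A^{\ast})\cap Z(A^{\ast}),\ p^2=\varepsilon \ \}$ after renaming $g$ to $p$, and combined with $\text{CRib}(A,\sigma )=\text{Rib}(A^{\ast},\sigma )$ this completes the argument. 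I expect the only delicate point to be the bookkeeping in the Drinfeld-element computation, namely keeping track of the order of the tensor factors under $(A\otimes A)^{\ast}\cong A^{\ast}\otimes A^{\ast}$ together with the convolution product and the dual antipode; once this is handled, everything else is a direct transcription of the quasitriangular results through duality.
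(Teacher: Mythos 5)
Your proposal is correct and follows exactly the route the paper takes: the paper states Corollary~\ref{4-4} as an immediate dualization of Proposition~\ref{4-3} via the Remark identifying coribbon elements of $(A,\sigma)$ with ribbon elements of $(A^{\ast},\sigma)$, leaving the details implicit. Your verification that $\sum S_{A^{\ast}}(\sigma^{(2)})\,\sigma^{(1)}$ evaluates to $\sum \sigma(a_{(2)},S(a_{(1)}))$, and that semisimplicity and cosemisimplicity are exchanged under duality, simply fills in the bookkeeping the paper omits.
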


\begin{rem} 
For the dual Hopf algebra  $A^{\ast}$, 
\begin{align*}
Z(A^{\ast})&=\{ \ p\in A^{\ast}\ \vert \ \forall \kern0.2em a \in A,\ \textstyle \sum p(a_{(1)})a_{(2)}=\sum p(a_{(2)})a_{(1)}\ \} , \\ 
G(A^{\ast})&=\{ \ p\in A^{\ast}\ \vert \ \forall \kern0.2em a,b\in A,\ p(ab)=p(a)p(b),\ p(1)=1\ \} .
\end{align*}
\end{rem}

\subsection{Polynomial invariants of quasitriangular Hopf algebras} 
In \cite{W} the author introduced some invariant of a finite-dimensional semisimple and cosemisimple Hopf algebra defined by using braiding structures and  given as a polynomial. 
This invariant is a monoidal Morita invariant for such a Hopf algebra. 
In this subsection we consider a braided refinement of the invariant. 
\par 
Let $A$ be a finite-dimensional semisimple and cosemisimple Hopf algebra over $\boldsymbol{k}$. 
By Etingof and Gelaki \cite[Corollary 1.5]{EG}, the set of universal $R$-matrices $\underline{\text{Braid}}(A)$ is finite. 
Let us consider a quasitriangular Hopf algebra  $(A,R)$.  
For an element $a\in A$ and a finite-dimensional left $A$-module $M$, let $\underline{a}_M: M\longrightarrow M$ denote the left action of  $a$ on $M$, and 
$u\in A$ is the Drinfeld element of $(A, R)$. Then, we set 
\par \smallskip \centerline{$\underline{\dim}_R\kern0.1em M=\text{Tr}(\underline{u}_M),$}
\par \smallskip \noindent 
and call it the categorical dimension of $M$ \cite{Majid}. 
We note that 
if $A$ is a finite-dimensional semisimple and cosemisimple Hopf algebra over  $\boldsymbol{k}$, then for any absolutely simple left $A$-module $M$, 
$(\dim M)1_{\boldsymbol{k}}\not= 0$ by \cite{EG}, and 
the following equation holds \cite[Lemma 3.2]{W}: 
\begin{equation}\label{eq4-1}
\Bigl( \dfrac{\underline{\text{dim}}_R M}{\dim M}\Bigr) ^{(\dim A)^3}=1. 
\end{equation}
So, $\underline{\text{dim}}_R M/\dim M$ is a root of unity in $\boldsymbol{k}$. 

\par 
Let $d$ be a positive integer, and $\{ M_1, \ldots , M_t\}$ be a complete system of the absolutely simple left $A$-modules of dimension $d$. 
Then we define a polynomial $P_{A,R}^{(d)}(x)\in \boldsymbol{k}[x]$ by 
\begin{equation}
P_{A,R}^{(d)}(x)=\prod\limits_{i=1}^t \Bigl( x-\dfrac{\underline{\dim}_RM_i}{d}\Bigr) . 
\end{equation}
If there is no absolutely simple left $A$-module of dimension $d$, then we define $P_{A,R}^{(d)}(x):=1$. 
\par 
For a quasitriangular Hopf algebra $(A,R)$  we denote the braided monoidal category  $({}_A\mathbb{M}, c^R)$ by ${}_{(A,R)}\mathbb{M}$. 
Here, $c^R$ is the braiding associated to $R=\sum R^{(1)}\otimes R^{(2)}$, that is, for $M, N\in {}_A\mathbb{M}$ 
\par \smallskip \centerline{$(c^R)_{M, N}(m\otimes n)=\sum (R^{(2)}\cdot n)\otimes (R^{(1)}\cdot m)\qquad (m\in M,\ n\in N).$}
\par \smallskip 
Two quasitriangular Hopf algebras $(A,R)$ and $(B, R^{\prime})$ over $\boldsymbol{k}$ are said to be 
\textit{braided Morita equivalent} if the braided monoidal categories 
${}_{(A,R)}\mathbb{M}$ and ${}_{(B,R^{\prime})}\mathbb{M}$ are equivalent as 
$\boldsymbol{k}$-linear braided monoidal categories. 
By using the same technique in the proof of \cite[Theorem 2.6]{W} 
it can be verified that the above polynomial $P_{A,R}^{(d)}(x)$ is a braided Morita invariant, that is, 
$(A,R)$ and $(B, R^{\prime})$ are braided Morita equivalent, then 
$P_{(A,R)}^{(d)}(x)=P_{(B, R^{\prime})}^{(d)}(x)$ for all positive integers $d$. 
By using $P_{A,R}^{(d)}(x)$, 
the polynomial invariant $P_A^{(d)}(x)$ defined in \cite{W} can be written by 
$P_A^{(d)}(x)=\prod_{R\in \underline{\text{Braid}}(A)}P_{A,R}^{(d)}(x)$. 
\par 
Another braided Morita invariant can be constructed by using ribbon structures. 
Let $(A, R)$ be a quasitriangular Hopf algebra, and $v\in A$ be its ribbon element. 
Then any ribbon element $v$ of $(A,R)$ induces a twist $\theta ^v = \{ \theta ^v_M\}_{M\in {}_A\mathbb{M}}$ for the braided category ${}_{(A,R)}\mathbb{M}$, 
where $\theta ^v_M: M\longrightarrow M$ is an $A$-linear isomorphism defined by
\par \smallskip \centerline{$(\theta ^v_M)(m)=v^{-1}\cdot m\qquad (m\in M).$}
\par \smallskip 
Suppose that $A$ is finite-dimensional semisimple and cosemisimple. 
For a positive integer $d$ a polynomial $\tilde{P}_{A,R}^{(d)}(x)$ can be defined as follows. 
\begin{equation}
\tilde{P}_{A,R}^{(d)}(x):=\prod\limits_{v\in \text{Rib}(A,R)}\prod\limits_{i=1}^t (x-\xi _v(M_i)), 
\end{equation}
where $\{ M_1, \ldots , M_t\} $ is a complete system of the absolutely simple left $A$-modules of dimension $d$, 
and  $\xi _v(M_i)$ is a scalar determined by $\underline{v}_{M_i}=\xi _v(M_i)\text{id}_{M_i}$. 
This polynomial $\tilde{P}_{A,R}^{(d)}(x)$ is also a braided Morita invariant. 
By Proposition~\ref{4-rib} and Lemma~\ref{3-10} given in the next subsection we have: 

\par 
\begin{prop} \label{4-11} 
Let $(A,R)$ be a semisimple and cosemisimple quasitriangular Hopf algebra of finite dimension. 
For a positive integer $d$, 
$\tilde{P}_{A,R}^{(d)}(x)$ can be divided by $P_{A,R}^{(d)}(x)$ in $\boldsymbol{k}[x]$. 
So, a polynomial $Q_{A,R}^{(d)}(x)=\tilde{P}_{A,R}^{(d)}(x)/P_{A,R}^{(d)}(x)$ 
$\in \boldsymbol{k}[x]$
is defined, and it is also a braided Morita invariant. 
\end{prop}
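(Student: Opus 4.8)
The plan is to exhibit $P_{A,R}^{(d)}(x)$ explicitly as one of the factors occurring in the product that defines $\tilde{P}_{A,R}^{(d)}(x)$, so that both the divisibility and the shape of the quotient become transparent. The whole argument rests on the observation, supplied by Proposition~\ref{4-rib}, that in the semisimple and cosemisimple case the Drinfeld element $u$ is \emph{itself} a ribbon element of $(A,R)$, i.e.\ the distinguished member of $\text{Rib}(A,R)$ obtained by taking $g=1$ in Proposition~\ref{4-3}.

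First I would reinterpret the roots of $P_{A,R}^{(d)}(x)$. By Proposition~\ref{4-rib} we have $u\in Z(A)$, so $\underline{u}_{M_i}$ commutes with the $A$-action on each absolutely simple module $M_i$; since $\text{End}_A(M_i)=\boldsymbol{k}$ by absolute simplicity, $u$ acts as a scalar, $\underline{u}_{M_i}=\xi_u(M_i)\text{id}_{M_i}$. Taking traces and using $\dim M_i=d$ gives $\underline{\dim}_R M_i=\text{Tr}(\underline{u}_{M_i})=d\,\xi_u(M_i)$, hence $\underline{\dim}_R M_i/d=\xi_u(M_i)$. Therefore
\[
P_{A,R}^{(d)}(x)=\prod_{i=1}^t\bigl(x-\xi_u(M_i)\bigr),
\]
which is precisely the inner product appearing in the definition of $\tilde{P}_{A,R}^{(d)}(x)$ for the ribbon element $v=u$. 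This is the step where I would invoke the cited Lemma~\ref{3-10}, whose role is to guarantee that the scalar by which the central element $u$ acts is exactly the relevant $\xi$-datum indexing a factor of $\tilde{P}_{A,R}^{(d)}(x)$.

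Next, since $u\in\text{Rib}(A,R)$, splitting off the factor indexed by $v=u$ in the definition of $\tilde{P}_{A,R}^{(d)}(x)$ yields
\[
\tilde{P}_{A,R}^{(d)}(x)=P_{A,R}^{(d)}(x)\prod_{v\in\text{Rib}(A,R)\setminus\{u\}}\prod_{i=1}^t\bigl(x-\xi_v(M_i)\bigr),
\]
so that $P_{A,R}^{(d)}(x)$ divides $\tilde{P}_{A,R}^{(d)}(x)$ in $\boldsymbol{k}[x]$ and the quotient $Q_{A,R}^{(d)}(x)$ is the displayed product over the remaining ribbon elements, a genuine polynomial. Finally, because both $\tilde{P}_{A,R}^{(d)}(x)$ and $P_{A,R}^{(d)}(x)$ have already been shown to be braided Morita invariants, a braided Morita equivalence $(A,R)\sim(B,R^{\prime})$ forces equality of both numerator and denominator, whence $Q_{A,R}^{(d)}(x)=Q_{B,R^{\prime}}^{(d)}(x)$; thus $Q_{A,R}^{(d)}(x)$ is a braided Morita invariant as well.

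The step I expect to carry the real content is the identification $\underline{\dim}_R M_i/d=\xi_u(M_i)$: it requires both halves of Proposition~\ref{4-rib}, namely that $u$ is central (so the trace collapses to $d$ times a scalar) and that $u$ is a ribbon element (so that this scalar actually indexes a factor of $\tilde{P}_{A,R}^{(d)}(x)$). Everything else is formal bookkeeping once $u$ is recognized as the distinguished ribbon element sitting inside $\text{Rib}(A,R)$.
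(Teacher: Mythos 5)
Your proof is correct and follows essentially the same route as the paper: Proposition~\ref{4-rib} puts the Drinfeld element $u$ in $\text{Rib}(A,R)$, Lemma~\ref{3-10} (which you rederive directly via centrality of $u$, Schur's lemma for absolutely simple modules, and $(\dim M_i)1_{\boldsymbol{k}}\neq 0$) identifies $P_{A,R}^{(d)}(x)$ with the factor of $\tilde{P}_{A,R}^{(d)}(x)$ indexed by $v=u$, and invariance of the quotient follows since numerator and denominator are both braided Morita invariants. This is precisely the argument the paper intends when it says the proposition follows from Proposition~\ref{4-rib} and Lemma~\ref{3-10}.
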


\par 
\begin{exam}
Let $C_n$ denote the cyclic group of order $n$ which is generated by $a$, and 
$\omega \in \mathbb{C}$ be a primitive $n$th root of unity. 
The universal $R$-matrices of the group Hopf algebra $\mathbb{C}C_n$ are  
\par \smallskip \centerline{$R_d=\sum\limits_{k,l=0}^{n-1}\omega ^{dkl}E_k\otimes E_l\quad (d=0,1,\ldots , n-1),$}
\par \smallskip \noindent 
where $E_k=\frac{1}{n}\sum _{i=0}^{n-1}\omega ^{-ik}a^i$ for each $k\in \mathbb{Z}$. 
The Drinfeld element $u_d$ of $(\mathbb{C}C_n, R_d)$  is 
$u_d=\sum_{k=0}^{n-1}\omega ^{-dk^2}E_k$. 
We have 
\par \smallskip \centerline{  
$\text{Rib}(\mathbb{C}C_n, R_d)=\begin{cases}
\{ u_d\} & \text{if $n$ is odd}, \\ 
\{ u_d, u_da^{\frac{n}{2}}\} & \text{if $n$ is even}. 
\end{cases}
$}
\par \smallskip \noindent 
If we set $M_k=\mathbb{C}E_k$, then $\{ M_0, M_1, \ldots , M_{n-1}\} $ 
forms a complete system of simple $\mathbb{C}C_n$-modules. 
Then $\xi _{u_d}(M_k)=\underline{\dim}_{R_d} M_k=\text{Tr}(\underline{u_d}_{M_k})=\omega ^{-dk^2}$, and 
if $n$ is even, then $\xi _{\overline{u}_d}(M_k)=(-1)^k\underline{\dim}_{R_d} M_k=(-1)^k\omega ^{-dk^2}$ for $\overline{u}_d:=u_da^{\frac{n}{2}}$ by using 
$a^jE_k=\omega ^{jk}E_k\ (j,k\in \mathbb{Z})$. 
Therefore 
\begin{align*}
P_{(\mathbb{C}C_n, R_d)}^{(1)}(x)
&=\prod\limits_{k=0}^{n-1}(x-\omega ^{-dk^2}),\\ 
Q_{(\mathbb{C}C_n, R_d)}^{(1)}(x)
&=\begin{cases}
1 & \text{if $n$ is odd}, \\ 
\prod\limits_{k=0}^{n-1}(x-(-1)^k\omega ^{-dk^2}) & \text{if $n$ is even}. 
\end{cases} 
\end{align*}
By comparing $P_{(\mathbb{C}C_n, R_d)}^{(1)}(x)$ we see that $(\mathbb{C}C_n, R_d)\ (d=0,1,\ldots , n-1)$ are not mutually braided Morita equivalent  for $n=2,3,4$. 
In the case of $n=5$
\begin{align*}
P_{(\mathbb{C}C_5, R_0)}^{(1)}(x)&=(x-1)^5,\displaybreak[0]\\ 
P_{(\mathbb{C}C_5, R_1)}^{(1)}(x)&=P_{(\mathbb{C}C_5, R_4)}^{(1)}(x)=(x-1)(x-\omega ^{-1})^2(x-\omega )^2,\displaybreak[0]\\ 
P_{(\mathbb{C}C_5, R_2)}^{(1)}(x)&=P_{(\mathbb{C}C_5, R_3)}^{(1)}(x)=(x-1)(x-\omega^3)^2(x-\omega^2)^2. 
\end{align*}
So, $(\mathbb{C}C_5, R_d)\ (d=0,1,2)$ are not mutually braided Morita equivalent. 
\end{exam} 

\subsection{Relationship between ribbon and pivotal structures}  
It is known that any ribbon category has a pivotal structure \cite{Yetter}. 
In the case where a ribbon category is the module category ${}_A\mathbb{M}^{\text{fd.}}$ of finite-dimensional left $A$-modules over a ribbon Hopf algebra $(A, R, v)$,  
the associated pivotal structure $\tau$ is given by 
\par \smallskip \centerline{$\tau_M(m)=uv^{-1}\cdot m\quad (m\in M)$}
\par \smallskip \noindent 
for each object $M\in {}_A\mathbb{M}^{\text{fd.}}$. 
Therefore the left and right pivotal dimensions of $M$ in the ribbon category are 
\begin{equation}\label{eq1-7}
\underline{\text{pdim}}_{uv^{-1}}^{\ell} M =\underline{\text{pdim}}_{uv^{-1}}^{r} M =\text{Tr}(\underline{uv^{-1}}_M). 
\end{equation}

Suppose that $M$ is absolutely simple. Since $v$ is invertible, it follows that $\xi_v(M)\not= 0$, and hence 
$$
\underline{\text{pdim}}_{uv^{-1}}^{\ell} M =\underline{\text{pdim}}_{uv^{-1}}^{r} M 
=\xi_v(M)^{-1}\text{Tr}(\underline{u}_M)=\xi_v(M)^{-1}\underline{\dim}_R M. 
$$
From this, we also have 
\begin{equation}\label{eq1-9}
\dfrac{\underline{\text{pdim}}_{uv^{-1}}^{r} M }{\dim M}
=\xi_v(M)^{-1}\dfrac{\underline{\dim}_R M}{\dim M}. 
\end{equation}

If $A$ is semisimple and cosemisimple, then the pivotal structures of ${}_A\mathbb{M}^{\text{fd.}}$ are uniquely determined by the group $Z(A)\cap G(A)$. 
Thus, $\eta :=uv^{-1}$ has finite order, and it follows that $\underline{\text{pdim}}_{uv^{-1}}^{r} M /\dim M$ is a root of unity in $\boldsymbol{k}$. 
By \eqref{eq4-1}, $\underline{\dim}_R M/\dim M$ is also a root of unity, and so by \eqref{eq1-9}, $\xi_v(M)^{-1}$ is, too. 

\par 
\begin{lem}\label{3-10} 
Let $(A, R)$ be a semisimple and cosemisimple quasitriangular Hopf algebra of finite dimension, 
and $M$ be an absolutely simple left $A$-module. Then 
$\xi_u(M)=\frac{\underline{\dim}_RM}{\dim M}$ for the Drinfeld element $u$ of $(A, R)$. 
\end{lem}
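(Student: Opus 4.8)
The plan is to prove this in two moves: first establish that the Drinfeld element $u$ acts on the absolutely simple module $M$ as a scalar, so that the symbol $\xi_u(M)$ even makes sense, and then pin down the value of that scalar by a trace computation. The whole weight of the lemma rests on the first move, which is where the semisimple-and-cosemisimple hypothesis is used.

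For the first move I would invoke Proposition~\ref{4-rib}: because $A$ is semisimple and cosemisimple, Gelaki's result guarantees $u\in Z(A)$ (indeed $u$ is then a ribbon element, so writing $\xi_u(M)$ in the sense of the earlier definition is legitimate). Centrality is exactly what is needed to see that the left-multiplication map $\underline{u}_M\colon M\longrightarrow M$, $m\mapsto u\cdot m$, is $A$-linear: for all $a\in A$ and $m\in M$ we have $a\cdot(u\cdot m)=(au)\cdot m=(ua)\cdot m=u\cdot(a\cdot m)$, so $\underline{u}_M\in\text{End}_A(M)$. Since $M$ is absolutely simple, Schur's lemma in its strong form gives $\text{End}_A(M)=\boldsymbol{k}\,\text{id}_M$, whence $\underline{u}_M=\xi_u(M)\,\text{id}_M$ for a unique scalar $\xi_u(M)\in\boldsymbol{k}$.

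The second move is then immediate. Taking the trace of $\underline{u}_M=\xi_u(M)\,\text{id}_M$ yields $\underline{\dim}_R M=\text{Tr}(\underline{u}_M)=\xi_u(M)\,(\dim M)1_{\boldsymbol{k}}$. As recalled just before \eqref{eq4-1}, the semisimple-and-cosemisimple hypothesis forces $(\dim M)1_{\boldsymbol{k}}\neq 0$ (by \cite{EG}), so dividing through gives $\xi_u(M)=\underline{\dim}_R M/\dim M$, as claimed.

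The only genuinely nontrivial step is the centrality of $u$. For a general quasitriangular Hopf algebra $u$ merely implements $S^2$ by conjugation, $S^2(a)=uau^{-1}$ (condition $(DE1)$), and need not be central, so $\underline{u}_M$ need not be an $A$-endomorphism and $\xi_u(M)$ need not exist. It is precisely Gelaki's Proposition~\ref{4-rib} that restores centrality in the semisimple-and-cosemisimple setting; everything after that is Schur's lemma together with a one-line trace argument.
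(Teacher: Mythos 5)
Your proof is correct, and all of its ingredients are the same as the paper's: Gelaki's centrality result (Proposition~\ref{4-rib}), Schur's lemma for an absolutely simple module, a trace computation, and the nonvanishing $(\dim M)1_{\boldsymbol{k}}\neq 0$ from \cite{EG}. The difference is one of packaging rather than of mathematical route. The paper does not re-run the Schur-plus-trace argument; it specializes the pivotal-dimension identities it has just established in Section 2.3: taking the ribbon element $v=u$, the associated pivotal element is $\eta=uu^{-1}=1_A$, so by \eqref{eq1-7} the right pivotal dimension of $M$ is $\mathrm{Tr}(\underline{\eta}_M)=(\dim M)1_{\boldsymbol{k}}$, and then \eqref{eq1-9} reads $1=\xi_u(M)^{-1}\,\underline{\dim}_R M/\dim M$, which is the claim. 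But \eqref{eq1-9} itself was derived exactly from the facts you prove by hand, namely that a ribbon element acts on an absolutely simple module by a nonzero scalar, followed by taking traces; so your argument is an unwound, self-contained version of the paper's two-line proof. What yours buys is independence from the pivotal formalism (it could be read immediately after the definition of $\xi_v$), at the cost of repeating work; what the paper's buys is brevity and an explicit link between the Drinfeld element, ribbon elements, and the trivial pivotal structure, which is the theme of that subsection.
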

\begin{proof}
The pivotal element  $\eta $ corresponding to $u$ is $\eta =uu^{-1}=1_A$. 
Since 
$\underline{\text{pdim}}_{\eta }^r M$ coincides with the trace of $\underline{\eta}_{M}$ by \eqref{eq1-7},  
we see that $\underline{\text{pdim}}_{\eta }^r M=(\dim M)1_{\boldsymbol{k}}$. 
Now, the desired equation follows from \eqref{eq1-9}. 
\end{proof}

\par \medskip 
Let $C$ be a coalgebra over $\boldsymbol{k}$. 
The dual space $C^{\ast}$ has a $\boldsymbol{k}$-algebra structure, and 
any finite-dimensional right $C$-comodule $M$ can be regarded as a left $C^{\ast}$-module with 
the action 
\par \smallskip \centerline{$p\cdot m=\sum p(m_{(1)})m_{(0)} \qquad (p\in C^{\ast},\ m\in M),$}
\par \smallskip \noindent 
where we write the right $C$-coaction $\rho: M\longrightarrow M\otimes C$ in the form $\rho (m)=\sum m_{(0)}\otimes m_{(1)}$. 
This construction gives rise to an identical category equivalence between $\boldsymbol{k}$-linear monoidal categories of finite-dimensional right $C$-comodules and of finite-dimensional left $C^{\ast}$-modules. 
For a finite-dimensional right $C$-comodule $M$, 
an element $\text{ch}(M)\in C$ called the \textit{character} of $M$ is defined by 
\par \smallskip \centerline{$\text{ch}(M):=\sum\limits_{i=1}^d(e_i^{\ast}\otimes \text{id}_C)(\rho(e_i)),$}
\par \smallskip \noindent 
where $\{ e_i\}_{i=1}^d, \{ e_i^{\ast}\}_{i=1}^d$ are mutually dual bases of $M, M^{\ast}$, respectively. 
The above element does not depend on the choice of bases. 

\par 
\begin{lem}\label{4-2}
Let $A$ be a finite-dimensional Hopf algebra over $\boldsymbol{k}$, and $M$ be a finite-dimensional right $A$-comodule. 
\par 
$(1)$ $\eta \in A^{\ast}$ is a pivotal element of the dual Hopf algebra $A^{\ast}$, then 
$\underline{\text{pdim}}_{\eta }^r\kern0.1em M=\eta (\text{ch}(M))$, 
where the left-hand side is the right pivotal dimension of $M$ viewed as a left  $A^{\ast}$-module as usual. 
\par 
$(2)$ Assume that $M$ is absolutely simple with $(\dim M)1_{\boldsymbol{k}}\not= 0$. 
Then $\xi_p(M)=\frac{p(\text{ch}(M))}{\dim M}$ for any $p\in Z(A^{\ast})$. 
\end{lem}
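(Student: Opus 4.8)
The plan is to reduce both statements to a single basis computation that expresses the trace of the $A^{\ast}$-action of an element of $A^{\ast}$ on $M$ in terms of the character $\text{ch}(M)$. For part $(1)$ I would first invoke the general fact — of which \eqref{eq1-7} is the ribbon special case — that for a pivotal element $\eta $ of the dual Hopf algebra $A^{\ast}$, the right pivotal dimension of a finite-dimensional left $A^{\ast}$-module $M$ is the trace of the action of $\eta $, namely $\underline{\text{pdim}}_{\eta }^r M=\text{Tr}(\underline{\eta }_M)$. With this in hand, the content of $(1)$ becomes the single identity $\text{Tr}(\underline{\eta }_M)=\eta (\text{ch}(M))$.

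To establish that identity I would fix a basis $\{ e_1,\ldots ,e_d\} $ of $M$ with dual basis $\{ e_1^{\ast },\ldots ,e_d^{\ast }\} $ and write the right coaction as $\rho (e_i)=\sum_j e_j\otimes c_{ji}$ with $c_{ji}\in A$. On one side, the definition of the character gives $\text{ch}(M)=\sum_i (e_i^{\ast }\otimes \text{id})(\rho (e_i))=\sum_{i,j}e_i^{\ast }(e_j)c_{ji}=\sum_i c_{ii}$. On the other side, the induced left action reads $p\cdot e_i=\sum p((e_i)_{(1)})(e_i)_{(0)}=\sum_j p(c_{ji})e_j$ for any $p\in A^{\ast}$, so the matrix of $\underline{p}_M$ in this basis has $(j,i)$-entry $p(c_{ji})$ and hence $\text{Tr}(\underline{p}_M)=\sum_i p(c_{ii})=p\bigl( \sum_i c_{ii}\bigr) =p(\text{ch}(M))$. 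Setting $p=\eta $ yields $(1)$. I would stress that this trace formula $\text{Tr}(\underline{p}_M)=p(\text{ch}(M))$ holds for \emph{every} $p\in A^{\ast}$, with no pivotal hypothesis whatsoever; this is precisely what makes $(2)$ work.

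For part $(2)$, let $p\in Z(A^{\ast})$ and assume $M$ is absolutely simple. Then $\underline{p}_M$ is an $A^{\ast}$-module endomorphism of $M$, since centrality of $p$ means it commutes with the action of every element of $A^{\ast}$. By absolute simplicity, $\text{End}_{A^{\ast}}(M)=\boldsymbol{k}\,\text{id}_M$ by Schur's lemma, so $\underline{p}_M=\xi_p(M)\text{id}_M$ for a unique scalar $\xi_p(M)$, in accordance with the notation already fixed. Taking traces and applying the formula from the previous paragraph gives $p(\text{ch}(M))=\text{Tr}(\underline{p}_M)=\xi_p(M)(\dim M)1_{\boldsymbol{k}}$. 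Since $(\dim M)1_{\boldsymbol{k}}\not= 0$ by hypothesis, I may divide to obtain $\xi_p(M)=p(\text{ch}(M))/\dim M$, which is $(2)$.

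The computations themselves are routine bookkeeping; the only points that demand care are the correct identification of the right pivotal dimension with $\text{Tr}(\underline{\eta }_M)$ — the general pivotal-Hopf-algebra analogue of \eqref{eq1-7} — and the observation that the trace-character identity is valid for an arbitrary $p$, so that Schur's lemma can be applied to the central, but not necessarily pivotal, element $p$ in $(2)$.
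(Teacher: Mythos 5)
Your proof is correct and follows essentially the same route as the paper: the identical basis computation showing $\mathrm{Tr}(\underline{p}_M)=p(\mathrm{ch}(M))$ for arbitrary $p\in A^{\ast}$, combined with the identification $\underline{\mathrm{pdim}}_{\eta}^r M=\mathrm{Tr}(\underline{\eta}_M)$ for part (1) and Schur's lemma applied to the central element $p$ for part (2). Your writeup is slightly more explicit than the paper's (which leaves the Schur's lemma step implicit), but the argument is the same.
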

\begin{proof}
Let $\{ e_i\}_{i=1}^d$ and $\{ e_i^{\ast}\}_{i=1}^d$ be mutually dual bases of $M$ and $M^{\ast}$, respectively, and 
$\rho $ be the right coaction on $A$. 
We write $\rho (e_i)=\sum_{j=1}^d e_j\otimes a_{ji}\ (a_{ji}\in A)$. 
Then $\text{ch}(M)=\sum_{i=1}^d a_{ii}$, and hence 
$p(\text{ch}(M))=\sum_{i=1}^d p(a_{ii})$. 
On the other hand, 
$\underline{p}_M(e_i)=\sum_{j=1}^d p(a_{ji}) e_j$. Thus we have 
$\text{Tr}(\underline{p}_M)=\sum_{j=1}^d p(a_{ii})=p(\chi _M)$. 
Since $\text{Tr}(\underline{\eta}_M)=\underline{\text{pdim}}_{\eta}^r\kern0.1em M$ for a pivotal element $\eta$, Part (1) is proved.  
Assume that $M$ is absolutely simple with $(\dim M)1_{\boldsymbol{k}}\not= 0$. 
Then there is an element $\xi _p(M)\in \boldsymbol{k}$ such that $\underline{p}_M=\xi _p(M) \text{id}_M$. 
Taking the trace of this map we have the formula in Part (2). 
\end{proof}

\section{The coribbon elements of Suzuki's Hopf algebras}
In this section we review the definition of Suzuki's Hopf algebras, and describe the braiding structures of them in accordance with Suzuki's paper \cite{Suzuki2}. 
The correct results on coribbon elements of Suzuki's braided Hopf algebras described in \cite{W2} are also given. 
\par 
Suzuki's Hopf algebras are given as a family of  finite-dimensional cosemisimple Hopf algebras generated by  a comatrix basis of the $2\times 2$-matrices. 
Suppose that $\boldsymbol{k}$ is an algebraically closed field whose characteristic is not $2$, and let 
$C=(C, \Delta , \varepsilon )$ be the comatrix coalgebra of degree $2$ over  $\boldsymbol{k}$, that is,  
there is a basis $\{ X_{11},X_{12},X_{21},X_{22}\} $ of $C$ such that 
$$\Delta (X_{ij})=X_{i1}\otimes X_{1j}+X_{i2}\otimes X_{2j},\qquad \varepsilon (X_{ij})=\delta _{ij}.$$
Let $I$ be a coideal of the tensor algebra  $\mathcal{T}(C)$ defined by 
$$I=\boldsymbol{k}(X_{11}^2-X_{22}^2)+\boldsymbol{k}(X_{12}^2-X_{21}^2)+\sum\limits_{i-j\not\equiv l-m\ (\text{mod}\kern0.2em 2)}\boldsymbol{k}(X_{ij}X_{lm}).$$
We set $B:=\mathcal{T}(C)/\langle I\rangle $, and denote by $x_{ij}$ the image of $X_{ij}$ under the natural projection  $\mathcal{T}(C)\longrightarrow B$. 
For $i,j=1,2,\ m\geq 1$ we define an element $\chi _{ij}^m$ in $B$ by 
\begin{equation}\label{eq2-1}
\begin{aligned}
\chi _{11}^m & :=\underbrace{x_{11}x_{22}x_{11}\cdots\cdots }_{\text{$m$}}\ , \quad & 
\chi _{22}^m & :=\underbrace{x_{22}x_{11}x_{22}\cdots\cdots }_{\text{$m$}}, \\ 
\chi _{12}^m & :=\underbrace{x_{12}x_{21}x_{12}\cdots\cdots }_{\text{$m$}}\ ,\quad & 
\chi_{21}^m & :=\underbrace{x_{21}x_{12}x_{21}\cdots\cdots }_{\text{$m$}}.
\end{aligned}
\end{equation}
Then we have $\Delta (\chi _{ij}^m)=\chi _{i1}^m\otimes \chi _{1j}^m+\chi _{i2}^m\otimes \chi _{2j}^m$. 
\par 
Let $N\geq 1,\ L\geq 2$, $\nu ,\lambda =\pm 1$, and consider the following subset of $B$: 
\begin{equation}
J_{NL}^{\nu \lambda }:=\boldsymbol{k}(x_{11}^{2N}+\nu x_{12}^{2N}-1) \notag \\ 
+\boldsymbol{k}(\chi _{11}^L-\chi_{22}^L)+\boldsymbol{k}(-\lambda \chi_{12}^L+\chi_{21}^L). 
\end{equation}
Then $J_{NL}^{\nu \lambda }$ is a coideal of $B$, and 
$A_{NL}^{\nu \lambda }:=B/\langle J_{NL}^{\nu \lambda }\rangle $
is a bialgebra. 
We also denote the image of $x_{ij}$ by the same symbol. 
It can be easily shown that 
\begin{equation}\label{eq_basis_of_A}
\{ \ x_{11}^s\chi_{22}^t,\ x_{12}^s\chi_{21}^t\ \vert \ 1\leq s\leq 2N,\ 0\leq t\leq L-1\ \}
\end{equation}
is a basis of $A_{NL}^{\nu \lambda }$ over $\boldsymbol{k}$, and $\dim A_{NL}^{\nu \lambda }=4NL$. 
The bialgebra $A_{NL}^{\nu \lambda }$ actually is  a cosemisimple Hopf algebra, whose structure maps are given by 
\par \smallskip \centerline{$\Delta (x_{ij})=x_{i1}\otimes x_{1j}+x_{i2}\otimes x_{2j},\quad \varepsilon (x_{ij})=\delta _{ij},\quad S(x_{ij})=x_{ji}^{4N-1}.$}

\begin{rem}
 The description of the antipode of $A_{Nn}^{\nu \lambda}$ in \cite{W2} is wrong in the case when $\nu =-$.
\end{rem}

\par 
If  $\text{ch}(\boldsymbol{k})\nmid NL$, then the cosemisimple Hopf algebra  $A_{NL}^{\nu \lambda}$ is also semisimple \cite[Theorem 3.1 vii)]{Suzuki2}, and 
$A_{1L}^{++}, A_{1L}^{+-}$ coincide with $\mathcal{A}_{4L}, \mathcal{B}_{4L}$, respectively, which are introduced by Masuoka~\cite{Masuoka1} and generalized in \cite{CDMM}. 
In particular, the Hopf algebra $H_8=A_{12}^{+-}$ is the unique Hopf algebra which is an $8$-dimensional non-commutative and non-cocommutative Hopf algebra up to isomorphism. 
This Hopf algebra is called the Kac-Paljutkin algebra \cite{KP, Masuoka0}. 
By uniqueness we see that $H_8$ is self-dual, that is the dual Hopf algebra is isomorphic to itself. 

\par 
By \eqref{eq_basis_of_A}, we see that for any integers  $s,t\geq 0$ satisfying with $s+t\geq 1$, 
\begin{align*}
\Delta (x_{11}^s\chi _{22}^t)&=x_{11}^s\chi _{22}^t\otimes x_{11}^s\chi _{22}^t+x_{12}^s\chi _{21}^t\otimes x_{21}^s\chi _{12}^t,\\ 
\Delta (x_{12}^s\chi _{21}^t)&=x_{11}^s\chi _{22}^t\otimes x_{12}^s\chi _{21}^t+x_{12}^s\chi _{21}^t\otimes x_{22}^s\chi _{11}^t, \displaybreak[0]\\ 
S(x_{11}^s\chi _{22}^t)&=\begin{cases}
x_{22}^{(4N-2)(t+s)+s}\chi _{11}^t & \text{if $s+t$ is even},\\ 
x_{11}^{(4N-2)(t+s)+s}\chi _{22}^t & \text{if $s+t$ is odd}, \end{cases} \displaybreak[0]\\ 
S(x_{12}^s\chi _{21}^t)&=\begin{cases}
x_{12}^{(4N-2)(t+s)+s}\chi _{21}^t &\text{if $s+t$ is even},\\ 
x_{21}^{(4N-2)(t+s)+s}\chi _{12}^t &\text{if $s+t$ is odd}. 
\end{cases}
\end{align*}

It is known by Suzuki \cite{Suzuki2} that the group $G(A_{NL}^{\nu \lambda })$ is given by 
\begin{equation}\label{eq_grouplikes_of_SuzukiHopf}
G(A_{NL}^{\nu \lambda })=\{ \ 
x_{11}^{2s}\pm x_{12}^{2s},\ 
x_{11}^{2s+1}\chi _{22}^{L-1}\pm \sqrt{\lambda}x_{12}^{2s+1}\chi_{21}^{L-1}\ \vert \ 1\leq s\leq N\ \},
\end{equation}
\noindent 
that is of order $4N$, and  the set 
$$\{ \ \boldsymbol{k}g\ \vert \ g\in G(A_{NL}^{\nu \lambda })\ \} 
\cup \{ \ \boldsymbol{k}x_{11}^{2s}\chi_{22}^t+\boldsymbol{k}x_{12}^{2s}\chi_{21}^t  \ \vert \ 0\leq s\leq N-1,\ 1\leq t\leq L-1\ \} $$
gives a complete system of absolutely simple right $A_{NL}^{\nu \lambda}$-comodules, 
where the coactions of all subspaces above are induced from the comultiplication $\Delta $ of $A_{NL}^{\nu \lambda }$. 
\par 
Let $N$ be odd, and set $\lambda =+$ or $-$ if $L$ is odd or even, respectively. 
Then, $A_{NL}^{\nu \lambda }$ is isomorphic to the group algebra of the following finite group \cite{W}: 
\par \smallskip \centerline{$G_{NL}=\langle h,\ t,\ w\ \vert \ t^2=h^{2N}=1,\ w^L=h^{N},\ tw=w^{-1}t,\ ht=th,\ hw=wh \rangle .$}
\par \smallskip \noindent 
In fact, an algebra isomorphism 
$\varphi :\boldsymbol{k}[G_{NL}]\longrightarrow A_{NL}^{\nu \lambda }$ is given by 
\par \smallskip \centerline{$\varphi (h)=x_{11}^2-x_{12}^2,\quad \varphi (t)=x_{12}^N+x_{22}^N, \quad 
\varphi (w)=x_{11}^{2N-1}x_{22}-x_{21}^{2N-1}x_{12}. $}

\par \smallskip 
Suzuki \cite{Suzuki2} also determined the all braidings of $A_{NL}^{\nu \lambda }$. 
The construction of $A_{NL}^{\nu \lambda }$ and the method of determination of its braidings are closely related to the universality for quadratic bialgebras (see \cite{Doi} for a detailed statement and also \cite{W2} for the above fact). 

\begin{center}
\begin{tabular}{c|cccc}
$x\diagdown y$ & $x_{11}$ & $x_{12}$ & $x_{21}$ & $x_{22}$ \\
\hline 
 $x_{11}$  & $0$ & $0$ & $0$ & $0$ \\
 $x_{12}$  & $0$ & $\alpha $ & $\beta $ & $0$ \\
 $x_{21}$  & $0$ & $\beta $ & $\alpha $ & $0$ \\
 $x_{22}$  & $0$ & $0$ & $0$ & $0$ \\ 
\end{tabular} \hspace{2cm}  
\begin{tabular}{c|cccc}
$x\diagdown y$ & $x_{11}$ & $x_{12}$ & $x_{21}$ & $x_{22}$ \\
\hline 
 $x_{11}$  & $\gamma $ & $0$ & $0$ & $\delta $ \\
 $x_{12}$  & $0$ & $0$ & $0$ & $0$ \\
 $x_{21}$  & $0$ & $0$ & $0$ & $0$ \\
 $x_{22}$  & $\lambda \delta $ & $0$ & $0$ & $\gamma $ \\ 
\end{tabular}
\end{center}

\par 
\begin{thm}[{\bf S.Suzuki} {\cite{Suzuki2}}]\label{Suzuki_braiding}\label{1-2}  
$(1)$  For $\alpha , \beta \in \boldsymbol{k}$, 
let $\sigma _{\alpha \beta }: C\otimes C\longrightarrow \boldsymbol{k}$ be a $\boldsymbol{k}$-linear map whose values $\sigma _{\alpha \beta }(x_{ij}, x_{kl})\ (i,j,k,l=1,2)$ are given by the left table above. 
Then $\sigma _{\alpha \beta}$ is extended to a braiding of $A_{NL}^{\nu \lambda }$ if and only if 
$\alpha , \beta \in \boldsymbol{k}^{\times},\ (\alpha \beta )^N=\nu ,\ (\alpha \beta ^{-1})^L=\lambda$. 
\par 
\indent 
$(2)$ Consider the case $L=2$. For  
$\gamma , \delta \in \boldsymbol{k}$,  
let  $\tau _{\gamma \delta }^{\lambda}: C\otimes C\longrightarrow \boldsymbol{k}$ be a $\boldsymbol{k}$-linear map whose values $\tau _{\gamma \delta }^{\lambda }(x_{ij}, x_{kl})\ (i,j,k,l=1,2)$ are given by the right table above. 
Then, $\tau _{\gamma \delta }^{\lambda }$ is extended to a braiding of $A_{N2}^{\nu \lambda }$  if and only if 
$\gamma , \delta \in \boldsymbol{k}^{\times},\ \gamma ^2=\delta ^2,\ \gamma ^{2N}=1$. 
\par 
$(3)$ If $L\geq 3$, then the braidings of $A_{NL}^{\nu \lambda }$ are given by 
\par \smallskip \centerline{$\{ \ \sigma _{\alpha \beta }\ \vert \ \alpha , \beta \in \boldsymbol{k}^{\times},\ (\alpha \beta )^N=\nu ,\ (\alpha \beta ^{-1})^L=\lambda \ \} .$}
\par \smallskip 
If $L=2$, then  the braidings of $A_{N2}^{\nu \lambda }$ are given by 
\begin{align*}
&\{ \ \sigma _{\alpha \beta }\ \vert \ \alpha , \beta \in \boldsymbol{k}^{\times},\ (\alpha \beta )^N=\nu ,\ (\alpha \beta ^{-1})^2=\lambda \ \} \\ 
&\hspace{2cm} \cup \{\ \tau _{\gamma \delta }^{\lambda }\ \vert \ \gamma , \delta \in \boldsymbol{k}^{\times},\ \gamma ^2=\delta ^2,\ \gamma ^{2N}=1\ \} .
\end{align*}
\end{thm}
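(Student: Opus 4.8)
The plan is to use the fact that, by the multiplicativity axioms (B2) and (B3), a braiding is completely determined by its restriction to the generating subcoalgebra $C$, and that any bilinear form $\sigma_0\colon C\otimes C\to\boldsymbol{k}$ extends uniquely to a bicharacter $\tilde\sigma$ on the tensor bialgebra $\mathcal{T}(C)$ satisfying (B2) and (B3); this free extension always exists and is routine. The content of the theorem is therefore to decide, for a prescribed matrix $M=\bigl(\sigma_0(x_{ij},x_{kl})\bigr)_{i,j,k,l=1,2}$, exactly when $\tilde\sigma$ (i) satisfies the quasi-commutativity axiom (B1), (ii) descends through the coideals $\langle I\rangle$ and $\langle J_{NL}^{\nu\lambda}\rangle$, and (iii) is convolution-invertible. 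I would organize the whole proof around translating these three requirements into conditions on the sixteen scalars of $M$.

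For the two verification directions in (1) and (2), the first step is to rewrite (B1) on generators. Substituting $\Delta(x_{ij})=\sum_m x_{im}\otimes x_{mj}$ turns (B1) applied to $x_{ij},x_{kl}$ into the FRT-type identity $\sum_{m,n}\sigma_0(x_{im},x_{kn})\,x_{mj}x_{nl}=\sum_{m,n}\sigma_0(x_{mj},x_{nl})\,x_{kn}x_{im}$ in $A_{NL}^{\nu\lambda}$, which I would check directly for the two tables by comparing coefficients in the basis \eqref{eq_basis_of_A}. Descent through $I$ is controlled by the parity grading $x_{ij}\mapsto i-j \bmod 2$: each table is supported on a single parity block, so that the complementary pair of generators pairs trivially under $\sigma_0$, and consequently the relations $x_{ij}x_{lm}=0$ for $i-j\not\equiv l-m$ and $x_{11}^2=x_{22}^2$, $x_{12}^2=x_{21}^2$ are annihilated in each slot; here descent through $x_{11}^2=x_{22}^2$ already produces $\gamma^2=\delta^2$ for the $\tau$-table. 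The remaining numerical constraints come from descent through $J_{NL}^{\nu\lambda}$: evaluating $\tilde\sigma(x_{11}^{2N}+\nu x_{12}^{2N}-1,\,x_{kl})$ by iterating (B2) along $\Delta^{(2N-1)}(x_{kl})$ collapses, by the alternation forced by the nonzero entries of $M$, to the single scalar $\nu(\alpha\beta)^N$ (resp.\ $\gamma^{2N}$), and matching it to $\varepsilon(x_{kl})=\delta_{kl}$ via (B4) gives $(\alpha\beta)^N=\nu$ (resp.\ $\gamma^{2N}=1$); the same scheme applied to $\chi_{21}^L-\lambda\chi_{12}^L$ yields $(\alpha\beta^{-1})^L=\lambda$. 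Finally, convolution-invertibility forces the diagonal scalars not to vanish, giving $\alpha,\beta\in\boldsymbol{k}^\times$ and $\gamma,\delta\in\boldsymbol{k}^\times$.

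For the completeness statement (3) I would begin with an arbitrary braiding $\sigma$, restrict it to $C$, and show that the matrix $M$ is forced into one of the two tables. The key local computation is (B1) applied to the even-block pair $x_{11},x_{22}$: the FRT identity collapses to $\sigma_0(x_{11},x_{22})\,(x_{11}x_{22}-x_{22}x_{11})=0$, and since $x_{11}x_{22}=\chi_{11}^2$ and $x_{22}x_{11}=\chi_{22}^2$ are distinct basis vectors of $A_{NL}^{\nu\lambda}$ unless $L=2$, this already forces $\sigma_0(x_{11},x_{22})=0$ whenever $L\geq 3$, and analogous identities kill the rest of the even block in that range. Running the remaining FRT identities against the basis \eqref{eq_basis_of_A} then shows that $M$ must be supported on a single parity block and must have the symmetric shape of the tables, while for $L=2$ the commuting relation $x_{11}x_{22}=x_{22}x_{11}$ leaves room for the extra $\tau_{\gamma\delta}^\lambda$ family. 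This block-rigidity analysis, in which one must rule out every mixed matrix and track the precise dependence on $L$, is the step I expect to be the main obstacle, since it is a finite but intricate case check of the FRT relations against the explicit basis rather than a single clean computation.
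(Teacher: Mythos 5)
First, a point of reference: the paper contains no proof of this theorem to compare against — it is quoted from Suzuki \cite{Suzuki2}, and the surrounding text only indicates the method (universality for quadratic bialgebras in Doi's sense, which is also what Lemmas~\ref{2-9} and \ref{2-12} invoke when extending $\sigma_{\alpha\beta}$ and $\tau_{\gamma\delta}^{\lambda}$ to braidings of $B$ and $B^{(\lambda)}$). So your plan must stand on its own. Its verification halves of (1) and (2) are in order: the FRT rewriting of (B1) is correct, the free extension of a form on $C$ to $\mathcal{T}(C)$ satisfying (B2)--(B3) is standard, and your descent computations do produce exactly $(\alpha\beta)^N=\nu$, $(\alpha\beta^{-1})^L=\lambda$, $\gamma^2=\delta^2$, $\gamma^{2N}=1$. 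You also identified the correct mechanism separating $L=2$ from $L\geq 3$, with one caveat: (B1) on $(x_{11},x_{22})$ does not collapse to $\sigma_0(x_{11},x_{22})(x_{11}x_{22}-x_{22}x_{11})=0$ alone, but to $\sigma_0(x_{11},x_{22})(x_{11}x_{22}-x_{22}x_{11})+\bigl(\sigma_0(x_{12},x_{21})-\sigma_0(x_{21},x_{12})\bigr)x_{21}x_{12}=0$; you must split this using the fact that $x_{11}\chi_{22}^{1}$, $x_{11}^{2N}\chi_{22}^{2}$ and $x_{12}^{2N}\chi_{21}^{2}$ are (nonzero multiples of) distinct elements of the basis \eqref{eq_basis_of_A} when $L\geq 3$ — harmless, and it even hands you the symmetry $\sigma_0(x_{12},x_{21})=\sigma_0(x_{21},x_{12})$ for free.

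The genuine gap is in part (3), and it is not merely that the case check is deferred: the specific claim that \emph{running the remaining FRT identities against the basis shows $M$ must be supported on a single parity block} would fail. Take $L=2$ and let $M$ be the union of the two tables (both blocks nonzero, mixed entries zero). Every degree-two identity (B1) then decouples into an even-block part and an odd-block part, each of which holds in $A_{N2}^{\nu\lambda}$ because $x_{11}x_{22}=x_{22}x_{11}$ and $x_{21}x_{12}=\lambda x_{12}x_{21}$ there; so this $M$ passes all the FRT identities, yet it extends to no braiding. What rules it out is not (B1) but your requirement (ii) — multiplicativity against the defining relations of $A$ — which you state in the setup and then drop from the completeness analysis: any braiding $\sigma$ of $A$ must satisfy $0=\sigma(x_{11}x_{12},x_{12})=\sum_n\sigma_0(x_{11},x_{1n})\sigma_0(x_{12},x_{n2})=\sigma_0(x_{11},x_{11})\sigma_0(x_{12},x_{12})$, since $x_{11}x_{12}=0$ in $A$; this (and its companions) is what forces single-block support. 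Similarly, for $L\geq 3$ the FRT identities only kill the off-diagonal even entries $\sigma_0(x_{11},x_{22})$, $\sigma_0(x_{22},x_{11})$; eliminating the diagonal even entries requires the same multiplicativity constraints together with convolution invertibility of the restricted form in $(C\otimes C)^{\ast}\cong M_4(\boldsymbol{k})$ (an even block that is diagonal, with zero odd block, gives a singular $4\times 4$ matrix). So the completeness argument must interleave all three of your requirements (i)--(iii); run with (i) alone, as the part (3) paragraph proposes, it does not close.
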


\par 
We note that there is a natural embedding $C\subset A_{NL}^{\nu \lambda }$, and therefore $A_{NL}^{\nu \lambda }$ is generated by $C$ as an algebra. 
Thus, a braiding $\sigma $ of $A_{NL}^{\nu \lambda }$ is determined by the values on $C$ by  (B2), (B3). 

\par 
The following lemma is partially proved in \cite[p.341]{W2}. 
The equation  
$\sigma _{\alpha \beta }^{-1}(x_{21}^{m-1}, x_{22})
=\sigma _{\alpha \beta }^{-1}(x_{22}, x_{12}^{m-1})
=\alpha ^{-\frac{m-1}{2}}\beta ^{-\frac{m-1}{2}}$ 
for an odd integer $m\geq 3$ is added. 
In particular, these values are not equal to $0$. 
Hereinafter, we treat the indices of Kronecker's delta $\delta _{ij}$ as modulo  $2$. 

\par 
\begin{lem}\label{2-6}
In the braided Hopf algebra $(A_{NL}^{\nu \lambda }, \sigma _{\alpha \beta })$ 
the following holds. 
\begin{align*}
\sigma _{\alpha \beta }^{\pm 1}(x_{ij}^m, x_{kl})
&=\sigma _{\alpha \beta }^{\pm 1}(x_{kl}, x_{ij}^m) \\ 
&=\begin{cases}
\delta _{i+j,1}\delta _{k+l, 1}(\alpha ^{\pm 1})^{\frac{m-1}{2}+\delta _{i,k}}(\beta^{\pm 1})^{\frac{m-1}{2}+\delta _{j,k}} & \text{if $m$ is odd},\\ 
\delta _{i+j,1}\delta _{k,l}(\alpha ^{\pm 1})^{\frac{m}{2}}(\beta^{\pm 1})^{\frac{m}{2}} & \text{if $m$ is even}. 
\end{cases}
\end{align*}
\end{lem}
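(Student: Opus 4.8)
The plan is to prove the closed formula by a single induction on $m$, organising the computation into $2\times 2$ matrices, and to obtain the $\sigma^{-1}$ statement from the $\sigma$ statement via the antipode. Fix $i,j,k,l$. First dispose of the diagonal case: from the left-hand table $\sigma_{\alpha\beta}(x_{11},-)=\sigma_{\alpha\beta}(x_{22},-)=0$ on generators, so peeling one factor off $x_{ii}^m=x_{ii}\cdot x_{ii}^{m-1}$ by (B2) gives $\sigma_{\alpha\beta}(x_{ii}^m,x_{kl})=0$; since the table is symmetric its $x_{11},x_{22}$ columns also vanish, giving $\sigma_{\alpha\beta}(x_{kl},x_{ii}^m)=0$, in agreement with the factor $\delta_{i+j,1}$. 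Hence assume $i\neq j$. Introduce the matrix $\Sigma_{ij}=\bigl(\sigma_{\alpha\beta}(x_{ij},x_{kp})\bigr)_{k,p}$, so that $\Sigma_{12}=\left(\begin{smallmatrix}0&\alpha\\\beta&0\end{smallmatrix}\right)$ and $\Sigma_{21}=\left(\begin{smallmatrix}0&\beta\\\alpha&0\end{smallmatrix}\right)$. Writing $x_{ij}^m=x_{ij}\cdot x_{ij}^{m-1}$ and applying (B2) with $\Delta(x_{kl})=\sum_p x_{kp}\otimes x_{pl}$ yields the recursion $\sigma_{\alpha\beta}(x_{ij}^m,x_{kl})=\sum_p(\Sigma_{ij})_{kp}\,\sigma_{\alpha\beta}(x_{ij}^{m-1},x_{pl})$, with initial value $\sigma_{\alpha\beta}(x_{ij}^0,x_{kl})=\varepsilon(x_{kl})=\delta_{kl}$ by (B4); hence $\sigma_{\alpha\beta}(x_{ij}^m,x_{kl})=(\Sigma_{ij}^m)_{kl}$. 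The parity split is now immediate from $\Sigma_{ij}^2=\alpha\beta\,I$: for $m$ even $\Sigma_{ij}^m=(\alpha\beta)^{m/2}I$, and for $m$ odd $\Sigma_{ij}^m=(\alpha\beta)^{(m-1)/2}\Sigma_{ij}$. Reading off the $(k,l)$ entry, and using $\delta_{i,k}+\delta_{j,k}=1$ (valid since $i\neq j$) to distribute the single extra power onto $\alpha$ or $\beta$, reproduces exactly the displayed formula for the $+1$ sign.

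For the symmetry $\sigma_{\alpha\beta}(x_{ij}^m,x_{kl})=\sigma_{\alpha\beta}(x_{kl},x_{ij}^m)$ I run the mirror recursion: peeling $x_{ij}^m$ in the second slot by (B3) gives $\sigma_{\alpha\beta}(x_{kl},x_{ij}^m)=\sum_p\sigma_{\alpha\beta}(x_{kp},x_{ij}^{m-1})\,\sigma_{\alpha\beta}(x_{pl},x_{ij})$, i.e. right multiplication by the matrix $\bigl(\sigma_{\alpha\beta}(x_{pl},x_{ij})\bigr)_{p,l}$. Because the defining table is symmetric this matrix is again $\Sigma_{ij}$, and with the same initial value $\delta_{kl}$ one gets $\sigma_{\alpha\beta}(x_{kl},x_{ij}^m)=(\Sigma_{ij}^m)_{kl}$, the same matrix power as before.

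For the inverse I will use the identity $\sigma_{\alpha\beta}^{-1}(x,y)=\sigma_{\alpha\beta}(S(x),y)$, valid for any braiding on a Hopf algebra (checked directly from (B2), (B4) and the antipode axiom, using uniqueness of convolution inverses). Since $S(x_{ij}^m)=S(x_{ij})^m=x_{ji}^{(4N-1)m}$, this reduces $\sigma_{\alpha\beta}^{-1}(x_{ij}^m,x_{kl})$ to $\sigma_{\alpha\beta}(x_{ji}^{(4N-1)m},x_{kl})$, which is evaluated by the formula just proved; note $(4N-1)m\equiv m\pmod 2$, so the parity is unchanged and only the index pair is interchanged $i\leftrightarrow j$. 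A direct count of the $\alpha$- and $\beta$-exponents then shows that this value differs from the claimed $\sigma^{-1}$-value (the one with $\alpha^{-1},\beta^{-1}$) by the single factor $(\alpha\beta)^{2Nm}=\bigl((\alpha\beta)^N\bigr)^{2m}=\nu^{2m}=1$, where $(\alpha\beta)^N=\nu$ is Suzuki's constraint from Theorem~\ref{Suzuki_braiding}. The symmetry for $\sigma^{-1}$ follows from the companion identity $\sigma_{\alpha\beta}^{-1}(x,y)=\sigma_{\alpha\beta}(x,S^{-1}(y))$ together with $S^{-1}=S$ (as $A_{NL}^{\nu\lambda}$ is semisimple and cosemisimple) and the symmetry of the previous paragraph.

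I expect the exponent reconciliation in the last paragraph to be the only genuine obstacle: the antipode simultaneously raises the exponent to $(4N-1)m$ and swaps the matrix indices, and it is precisely Suzuki's relation $(\alpha\beta)^N=\nu$ that collapses the excess power $(\alpha\beta)^{2Nm}$ to $1$, while the index swap is absorbed by $\delta_{i,k}+\delta_{j,k}=1$. Alternatively one can sidestep this bookkeeping entirely by first checking that $\sigma_{\alpha\beta}^{-1}$ takes the ``inverse-table'' values $\alpha^{-1},\beta^{-1}$ on generators and obeys the order-reversed forms of (B2),(B3), and then re-running the matrix recursion verbatim with $\alpha,\beta$ replaced by $\alpha^{-1},\beta^{-1}$, which makes the $\pm1$ cases literally parallel.
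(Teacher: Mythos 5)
Your proof is correct, but note that the paper itself contains no proof of Lemma~\ref{2-6}: it defers to p.~341 of \cite{W2} (as repaired in the Appendix), where the values are obtained by exactly the kind of entry-by-entry induction on $m$ that you sketch as your \emph{alternative} at the end --- (B2), (B3) for $\sigma_{\alpha\beta}$, and their order-reversed analogues applied to the inverse table $\alpha^{-1},\beta^{-1}$ for $\sigma_{\alpha\beta}^{-1}$. Your main route agrees with that for the $+1$ sign: the identity $\sigma_{\alpha\beta}(x_{ij}^m,x_{kl})=(\Sigma_{ij}^m)_{kl}$ with $\Sigma_{ij}^2=\alpha\beta I$, and the mirror recursion for the second slot (which correctly respects the reversed tensor order in (B3)), are just an efficient packaging of the same induction. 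Where you genuinely diverge is the $-1$ sign: instead of a second induction you reduce to the $+1$ case via $\sigma_{\alpha\beta}^{-1}(x,y)=\sigma_{\alpha\beta}(S(x),y)$ and $S(x_{ij}^m)=x_{ji}^{(4N-1)m}$, and your exponent reconciliation is right --- in either parity the ratio of the value so obtained to the claimed one is
\begin{equation*}
(\alpha\beta)^{\frac{(4N-1)m+m}{2}-1+\delta_{i,k}+\delta_{j,k}}=(\alpha\beta)^{2Nm}=\nu^{2m}=1,
\end{equation*}
using $\delta_{i,k}+\delta_{j,k}=1$ for $i\neq j$ and Suzuki's constraint $(\alpha\beta)^N=\nu$ from Theorem~\ref{Suzuki_braiding}. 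The trade-off: the antipode route needs only one induction but uses the Hopf structure and the constraint $(\alpha\beta)^N=\nu$; the direct route of \cite{W2} uses neither, so it establishes the formula already over the bialgebra $B$, where no antipode is available.

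One justification needs repair. You invoke $S^{-1}=S$ \emph{because $A_{NL}^{\nu\lambda}$ is semisimple and cosemisimple}, but semisimplicity requires $\text{ch}(\boldsymbol{k})\nmid NL$, which is not among the hypotheses of Lemma~\ref{2-6} (Section 3 assumes only that $\boldsymbol{k}$ is algebraically closed of characteristic $\neq 2$). Fortunately $S^2=\mathrm{id}$ holds unconditionally and can be checked on generators: the relations $x_{11}^2=x_{22}^2$, $x_{12}^2=x_{21}^2$, the vanishing of the mixed products $x_{ij}x_{lm}$ with $i-j\not\equiv l-m \pmod 2$, and $x_{11}^{2N}+\nu x_{12}^{2N}=1$ give $x_{ii}^{2N+1}=x_{ii}$ and $x_{ij}^{2N+1}=\nu x_{ij}$ for $i\neq j$; since $(4N-1)^2=1+2N(8N-4)$ with $8N-4$ even, it follows that $S^2(x_{ij})=x_{ij}^{(4N-1)^2}=x_{ij}$. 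Alternatively, you can bypass $S^{-1}$ entirely by deriving the symmetry of $\sigma_{\alpha\beta}^{-1}$ from the order-reversed forms of (B2), (B3), as in your alternative. With that patch the argument is complete.
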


\par 
\begin{lem}\label{2-10}
In the braided Hopf algebra $(A_{N2}^{\nu \lambda }, \tau _{\gamma \delta }^{\lambda })$ the following holds:  
$$(\tau _{\gamma \delta }^{\lambda })^{\pm 1}(x_{ij}^m, x_{kl})
=(\tau _{\gamma \delta }^{\lambda })^{\pm 1}(x_{ij}, x_{kl}^m)
=\begin{cases}
\gamma ^{\pm m} & \text{if $i=j=k=l$},\\ 
\delta ^{\pm m} & \text{if $i=j=1,\ k=l=2$},\\ 
(\lambda \delta )^{\pm m} & \text{if $i=j=2,\ k=l=1$},\\ 
0 & \text{otherwise}. 
\end{cases}
$$
\end{lem}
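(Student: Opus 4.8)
The plan is to prove both displayed identities by induction on $m$, peeling off one tensor factor at a time via the multiplicativity axioms (B2) and (B3), and exploiting the fact that $\tau:=\tau^{\lambda}_{\gamma\delta}$ is supported only on the ``diagonal'' pairs of generators. By the right-hand table the only nonzero values of $\tau$ on $C\otimes C$ are $\tau(x_{11},x_{11})=\tau(x_{22},x_{22})=\gamma$, $\tau(x_{11},x_{22})=\delta$ and $\tau(x_{22},x_{11})=\lambda\delta$; in particular $\tau(x_{ij},x_{kl})=0$ unless $i=j$ and $k=l$.

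First I would pin down $\tau^{-1}$ on the generators. Solving $\tau\ast\tau^{-1}=\varepsilon\otimes\varepsilon$ on pairs of generators and using $\Delta(x_{ij})=x_{i1}\otimes x_{1j}+x_{i2}\otimes x_{2j}$, each such equation collapses, because of the diagonal support of $\tau$, to a single product; e.g. $(\tau\ast\tau^{-1})(x_{22},x_{11})=\lambda\delta\,\tau^{-1}(x_{22},x_{11})=\varepsilon(x_{22})\varepsilon(x_{11})=1$. This yields $\tau^{-1}(x_{11},x_{11})=\tau^{-1}(x_{22},x_{22})=\gamma^{-1}$, $\tau^{-1}(x_{11},x_{22})=\delta^{-1}$, $\tau^{-1}(x_{22},x_{11})=(\lambda\delta)^{-1}$, and $\tau^{-1}(x_{ij},x_{kl})=0$ otherwise. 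Here $\gamma,\delta\in\boldsymbol{k}^{\times}$ by Theorem~\ref{1-2}(2), so these inverses exist, and $\tau^{-1}$ satisfies the versions of (B2), (B3) with the two factors in reversed order.

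For the first identity I would write $x_{ij}^m=x_{ij}\cdot x_{ij}^{m-1}$ and apply (B2):
\[
\tau(x_{ij}^m,x_{kl})=\tau(x_{ij},x_{k1})\tau(x_{ij}^{m-1},x_{1l})+\tau(x_{ij},x_{k2})\tau(x_{ij}^{m-1},x_{2l}).
\]
If $i\neq j$ both factors $\tau(x_{ij},x_{k\bullet})$ vanish, giving $0$; if $i=j$ only the summand yielding $\tau(x_{ii},x_{kk})$ survives, producing the recursion $\tau(x_{ii}^m,x_{kl})=\tau(x_{ii},x_{kk})\,\tau(x_{ii}^{m-1},x_{kl})$. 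When $k=l$ this multiplies by $\gamma$, $\delta$ or $\lambda\delta$ at each step and gives $\gamma^m$, $\delta^m$ or $(\lambda\delta)^m$ by induction (base case $m=1$ being the table); when $k\neq l$ the inductive factor vanishes (base case $\tau(x_{ii},x_{kl})=0$), so the value is $0$. Replacing $\tau$ by $\tau^{-1}$ and (B2) by its reversed-order counterpart produces the exponents $-m$; crucially, in every non-vanishing case $k=l$, so the surviving generator value is $\tau^{\pm1}(x_{ii},x_{kk})$ irrespective of the factor order in the inverse relation, and the two recursions coincide.

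The second identity is handled symmetrically: writing $x_{kl}^m=x_{kl}\cdot x_{kl}^{m-1}$ and applying (B3) gives, after the analogous collapse, $\tau^{\pm1}(x_{ij},x_{kl}^m)=\tau^{\pm1}(x_{ij},x_{kl}^{m-1})\,\tau^{\pm1}(x_{jj},x_{kl})$, and the same induction applies; matching the two computations then yields the asserted equality $\tau^{\pm1}(x_{ij}^m,x_{kl})=\tau^{\pm1}(x_{ij},x_{kl}^m)$. The only genuinely delicate point is the first step, namely determining $\tau^{-1}$ on the generators — in particular the factor $(\lambda\delta)^{-1}$ and the vanishing off the diagonal pairs. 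Once those generator values and supports are in hand, the inductions are purely mechanical, since the two coproduct summands always collapse to a single surviving term.
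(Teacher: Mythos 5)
Your proof is correct, and it is essentially the argument the paper intends: Lemma~\ref{2-10} is stated there without proof, the underlying computations being those of \cite{W2} (p.~342, as corrected in the Appendix), which likewise first determine $(\tau_{\gamma\delta}^{\lambda})^{\pm 1}$ on the generators and then peel off one tensor factor at a time using (B2), (B3) and their factor-reversed counterparts for the convolution inverse. In particular, you correctly handle the one delicate point — that $\tau^{-1}$ satisfies the reversed-order versions of (B2)/(B3), which is harmless here because in every non-vanishing case $k=l$, so both recursions multiply by the same generator value $\tau^{\pm 1}(x_{ii},x_{kk})$.
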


\par 
The Drinfeld elements of Suzuki's braided Hopf algebras are given by the following lemma. 

\par  
\begin{lem}\label{4-9}
Suppose that $\boldsymbol{k}$ contains a $4NL$th root of unity. 
\par 
$(1)$ The Drinfeld element $\varUpsilon _{\alpha \beta}$ of $(A_{NL}^{\nu \lambda}, \sigma _{\alpha \beta })$ is given by 
$\varUpsilon _{\alpha \beta}(x_{ij})=\delta _{i,j}\beta ^{-1}$. 
\par 
$(2)$ The Drinfeld element $\varUpsilon _{\gamma \delta }^{\lambda}$ of $(A_{N2}^{\nu \lambda}, \tau _{\gamma \delta }^{\lambda})$ is given by $\varUpsilon _{\gamma \delta }^{\lambda}(x_{ij})=\delta _{i,j}\gamma ^{-1}$. 
\end{lem}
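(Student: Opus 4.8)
The plan is to compute $\varUpsilon_{\alpha\beta}(x_{ij})$ directly from the defining formula recorded in Corollary~\ref{4-4}, namely $\varUpsilon_{\alpha\beta}(a)=\sum \sigma_{\alpha\beta}(a_{(2)}, S(a_{(1)}))$, by substituting the comultiplication and antipode of the comatrix generators. Since $\Delta(x_{ij})=x_{i1}\otimes x_{1j}+x_{i2}\otimes x_{2j}$ and $S(x_{kl})=x_{lk}^{4N-1}$, the formula unwinds to a sum of exactly two scalars,
\[
\varUpsilon_{\alpha\beta}(x_{ij})=\sigma_{\alpha\beta}(x_{1j},\, x_{1i}^{4N-1})+\sigma_{\alpha\beta}(x_{2j},\, x_{2i}^{4N-1}).
\]
Thus the whole problem reduces to evaluating $\sigma_{\alpha\beta}$ on a single generator paired with an \emph{odd} power ($m=4N-1$) of a generator, which is exactly the content of Lemma~\ref{2-6}; the corresponding computation for part~(2) uses Lemma~\ref{2-10} in place of Lemma~\ref{2-6}.

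Next I would apply Lemma~\ref{2-6} to each term. The crucial bookkeeping is carried by the parity factors $\delta_{i+j,1}$ (treated mod $2$) attached to each argument: the first term $\sigma_{\alpha\beta}(x_{1j}, x_{1i}^{4N-1})$ is nonzero only when both $1+j$ and $1+i$ are odd, i.e.\ when $i=j=2$, and the second term only when $i=j=1$. This at once yields the factor $\delta_{i,j}$ and kills the off-diagonal values. For the two surviving diagonal terms the exponent formula of Lemma~\ref{2-6} gives, in both cases, $\alpha^{2N}\beta^{2N-1}$: matching the first index of the single generator against the first index of the power generator makes the $\alpha$-exponent $\tfrac{m-1}{2}+1=2N$, while matching it against the second index of the power generator makes the $\beta$-exponent $\tfrac{m-1}{2}+0=2N-1$.

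It then remains only to simplify $\alpha^{2N}\beta^{2N-1}$ to $\beta^{-1}$. Here I would invoke the constraint $(\alpha\beta)^{N}=\nu$ from Theorem~\ref{Suzuki_braiding}, so that $(\alpha\beta)^{2N}=\nu^{2}=1$, hence $\alpha^{2N}\beta^{2N}=1$ and therefore $\alpha^{2N}\beta^{2N-1}=\beta^{-1}$, proving part~(1). Part~(2) runs along identical lines: the same expansion gives $\varUpsilon_{\gamma\delta}^{\lambda}(x_{ij})=\tau_{\gamma\delta}^{\lambda}(x_{1j}, x_{1i}^{4N-1})+\tau_{\gamma\delta}^{\lambda}(x_{2j}, x_{2i}^{4N-1})$, and by Lemma~\ref{2-10} each term vanishes unless the two index pairs coincide on the diagonal, in which case the value is $\gamma^{4N-1}$; the constraint $\gamma^{2N}=1$ then collapses $\gamma^{4N-1}$ to $\gamma^{-1}$.

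The argument is essentially mechanical once the two evaluation lemmas are available, so there is no deep obstacle. The only step demanding genuine care is the index tracking inside Lemma~\ref{2-6}, where one must correctly match the first index of the single generator against \emph{both} indices of the power generator and confirm that precisely the diagonal terms survive; a sign or index slip there would spoil the clean $\delta_{i,j}\beta^{-1}$ form. I would also remark that the hypothesis that $\boldsymbol{k}$ contains a $4NL$th root of unity serves only to guarantee that the braidings $\sigma_{\alpha\beta}$ and $\tau_{\gamma\delta}^{\lambda}$ (and hence their Drinfeld elements) exist over $\boldsymbol{k}$, so that the statement is non-vacuous; it plays no further role in the algebraic manipulation.
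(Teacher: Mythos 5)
Your proposal is correct and follows essentially the same route as the paper's own proof: expand $\varUpsilon(x_{ij})=\sum\sigma(x_{ij(2)},S(x_{ij(1)}))$ into the two terms $\sigma(x_{1j},x_{1i}^{4N-1})+\sigma(x_{2j},x_{2i}^{4N-1})$ and evaluate them by Lemma~\ref{2-6} (resp.\ Lemma~\ref{2-10}). The only difference is cosmetic: you spell out the index bookkeeping and the simplification $\alpha^{2N}\beta^{2N-1}=\beta^{-1}$ via $(\alpha\beta)^{2N}=\nu^2=1$, which the paper absorbs into its citation of Lemma~\ref{2-6}.
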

\begin{proof}
(1) $\varUpsilon _{\alpha \beta}(x_{ij})=\sigma _{\alpha \beta }(x_{1j}, S(x_{i1}))
+\sigma _{\alpha \beta }(x_{2j}, S(x_{i2}))=\sigma _{\alpha \beta }(x_{1j}, x_{1i}^{4N-1})$ 
$+\sigma _{\alpha \beta }(x_{2j}, x_{2i}^{4N-1}).$ 
Since $\sigma _{\alpha \beta }(x_{1j}, x_{1i}^{4N-1})=\delta _{j, 0}\delta _{i, 0}\beta ^{-1},\ 
\sigma _{\alpha \beta }(x_{2j}, x_{2i}^{4N-1})=\delta _{j, 1}\delta _{i, 1}\beta ^{-1}$ by Lemma~\ref{2-6}, 
it follows that $\varUpsilon _{\alpha \beta}(x_{ij})=\delta _{i,j}\beta ^{-1}$. 
\par 
(2) By Lemma~\ref{2-10} and $\gamma ^{2N}=1$, we have 
$ \varUpsilon _{\gamma \delta }^{\lambda}(x_{ij})
=\tau _{\gamma \delta }^{\lambda}(x_{1j}, x_{1i}^{4N-1})+\tau _{\gamma \delta }^{\lambda}(x_{2j}, x_{2i}^{4N-1}) 
=\delta _{i1}\delta _{j1}\gamma^{-1}+\delta _{i2}\delta _{j2}\gamma^{-1}
=\delta _{ij}\gamma^{-1}$. 
\end{proof}

\par \medskip 
The following is the revised version of  Lemma 8 in \cite{W2} (see Appendix for the needed modification). 

\par 
\begin{lem}\label{2-9}
The Yang-Baxter form $\sigma _{\alpha \beta }$ on $C$ given in Theorem~\ref{Suzuki_braiding} (1) can be extended to a braiding of the bialgebra $B=\mathcal{T}(C)/\langle I\rangle $. 
We denote it the same symbol $\sigma _{\alpha \beta }$. 
For an element $\omega \in \boldsymbol{k}^{\times }$, the $\boldsymbol{k}$-linear functional $\theta _{\omega }:C\longrightarrow \boldsymbol{k}$ defined by 
$\theta _{\omega }(x_{ij})=\delta _{ij}\omega \ (i,j=1,2)$ 
 can be extended to a coribbon element of the braided bialgebra $(B, \sigma _{\alpha \beta })$. 
We denote the coribbon element by the same symbol $\theta _{\omega }$. 
Suppose that $\alpha $ and $\beta $ satisfy  $(\alpha \beta )^N=\nu ,\ (\alpha \beta^{-1})^L=\lambda $. Then, 
\par 
$(1)$ $\theta _{\omega }$ induces a coribbon element of the braided bialgebra $(A_{NL}^{\nu \lambda } , \sigma _{\alpha \beta })$ if and only if $\omega ^{2N}=\alpha ^{2N}$. 
\par 
$(2)$ for $\omega $ with $\omega ^{2N}=\alpha ^{2N}$, 
$\theta _{\omega }\circ S=\theta _{\omega }$ if and only if $\omega =\pm \beta ^{-1}$. \qed 
\end{lem}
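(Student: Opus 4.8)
The plan is to reduce the whole statement to a single recursive computation of $\theta_{\omega}$ on the diagonal powers $x_{11}^{s}$, supplemented by the convolution structure of the dual. Since $B$ is generated as an algebra by $C$, axiom (CR2) expresses the value of any coribbon element on a product through its values on the tensor factors and $\sigma_{\alpha\beta}^{-1}$; hence a coribbon element of $(B,\sigma_{\alpha\beta})$ is \emph{uniquely} determined by its restriction to $C$, and the extension claimed in the first two sentences (as in \cite{W2}) amounts to checking that the recursion dictated by (CR2)--(CR3) from $\theta_{\omega}|_{C}$ is consistent with the relations $I$ and with (CR1); the braiding values needed are exactly those of Lemma~\ref{2-6}. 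Two structural facts then come for free. First, the index flip $\phi\colon x_{ij}\mapsto x_{3-i,3-j}$ is a bialgebra automorphism of $B$ fixing $\sigma_{\alpha\beta}$ and $\theta_{\omega}|_{C}$, so by uniqueness $\theta_{\omega}\circ\phi=\theta_{\omega}$, giving $\theta_{\omega}(\chi_{22}^{m})=\theta_{\omega}(\chi_{11}^{m})$ and $\theta_{\omega}(x_{22}^{m})=\theta_{\omega}(x_{11}^{m})$. Second, $\theta_{\omega}$ vanishes on every off-diagonal comatrix entry: applying (CR1) to a diagonal entry gives this for odd length, and a one-line (CR2)-computation using $\sigma_{\alpha\beta}^{-1}(x_{22},-)=0$ disposes of the even lengths $x_{12}^{2k},\chi_{12}^{2k}$.

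Next I would compute. Because the comatrix coproduct $\Delta(x_{ij}^{s})=\sum_{k}x_{ik}^{s}\otimes x_{kj}^{s}$ keeps each tensor leg at full degree, (CR2) forces $\theta_{\omega}(z)=\omega^{\deg z}\theta_{1}(z)$ on homogeneous $z$; feeding Lemma~\ref{2-6} into the one-step recursion $x_{11}^{s}=x_{11}\cdot x_{11}^{s-1}$ (the off-diagonal terms dropping out by the second structural fact) yields
\[
\theta_{\omega}(x_{11}^{s})=\omega^{s}\,\alpha^{-2\lfloor s^{2}/4\rfloor}\,\beta^{-2\lfloor (s-1)^{2}/4\rfloor}.
\]
Using the standing constraint $(\alpha\beta)^{2N}=1$ (from $(\alpha\beta)^{N}=\nu$), this collapses at the two exponents I need to $\theta_{\omega}(x_{11}^{2N})=\omega^{2N}\alpha^{-2N}$ and $\theta_{\omega}(x_{11}^{2N-1})=\omega^{2N-1}\beta^{2(N-1)}$; moreover $x_{11}^{4N-1}=x_{11}^{2N-1}$ in $A_{NL}^{\nu\lambda}$, since $x_{12}x_{11}=0$ kills the correction term in $x_{11}^{4N-1}=(1-\nu x_{12}^{2N})x_{11}^{2N-1}$. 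One also records $\theta_{\omega}(x_{12}^{2N})=0$.

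For part (1), necessity is immediate: if $\theta_{\omega}$ descends it annihilates $J_{NL}^{\nu\lambda}$, and evaluating on $x_{11}^{2N}+\nu x_{12}^{2N}-1$ gives $\omega^{2N}\alpha^{-2N}-1=0$, i.e.\ $\omega^{2N}=\alpha^{2N}$ (the other two generators are annihilated for all $\omega$ by the structural facts). For sufficiency I would work in the convolution algebra: let $p_{c}\in B^{\ast}$ be the character $p_{c}(x_{ij})=\delta_{ij}c$. Convolution by the central group-like $p_{c}$ preserves the coribbon axioms, so by uniqueness $p_{c}\ast\theta_{\omega}=\theta_{c\omega}$; in particular $\theta_{\omega}=p_{\omega\beta}\ast\theta_{\beta^{-1}}$, and $\theta_{\beta^{-1}}$ is the Drinfeld element $\varUpsilon_{\alpha\beta}$ of $(A_{NL}^{\nu\lambda},\sigma_{\alpha\beta})$ (Lemma~\ref{4-9}), hence lies in the convolution subalgebra $A^{\ast}\hookrightarrow B^{\ast}$. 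When $\omega^{2N}=\alpha^{2N}$ we have $(\omega\beta)^{2N}=1$, so the character $p_{\omega\beta}$ also descends, and the product $\theta_{\omega}=p_{\omega\beta}\ast\theta_{\beta^{-1}}$ lies in $A^{\ast}$; that is, $\theta_{\omega}$ descends.

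For part (2), the identity $R=(S\otimes S)(R)$ of Lemma~\ref{properties_DE} dualizes to $\sigma_{\alpha\beta}\circ(S\otimes S)=\sigma_{\alpha\beta}$, whence $\theta\mapsto\theta\circ S$ sends coribbon elements of $(A_{NL}^{\nu\lambda},\sigma_{\alpha\beta})$ to coribbon elements. Thus $\theta_{\omega}\circ S$ is coribbon; it vanishes on the off-diagonal and, by the computation above, takes the value $\theta_{\omega}(x_{11}^{4N-1})=\theta_{\omega}(x_{11}^{2N-1})=\omega^{2N-1}\beta^{2(N-1)}$ on $x_{11}$, so by uniqueness $\theta_{\omega}\circ S=\theta_{\omega^{2N-1}\beta^{2(N-1)}}$. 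Hence $\theta_{\omega}\circ S=\theta_{\omega}$ is equivalent to $(\omega\beta)^{2(N-1)}=1$, which together with the standing $(\omega\beta)^{2N}=1$ forces $(\omega\beta)^{2}=1$, i.e.\ $\omega=\pm\beta^{-1}$. The main obstacle is the displayed formula for $\theta_{\omega}(x_{11}^{s})$: running the (CR2)-recursion requires carefully tracking the even/odd alternation of the comatrix powers and the interlocking $\alpha$- and $\beta$-exponents coming from Lemma~\ref{2-6}; a secondary point, on which the factorization in part (1) rests, is the verification that convolution by $p_{c}$ preserves the coribbon axioms (equivalently that $p_{c}$ is a central group-like in the sense of the Remark after Corollary~\ref{4-4}) and that $\theta_{\beta^{-1}}$ indeed coincides with $\varUpsilon_{\alpha\beta}$.
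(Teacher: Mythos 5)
Your proposal is correct, and its computational engine coincides with the paper's own (corrected) proof, which is not printed in the body but is the argument of \cite{W2} as repaired in the Appendix: there the (CR2)-recursion
$\theta_{\omega}(x_{11}^{m})=\omega\,\theta_{\omega}(x_{22}^{m-1})\alpha^{-m}\beta^{-(m-2)}$ ($m$ even),
$\theta_{\omega}(x_{22}^{m})=\omega\,\theta_{\omega}(x_{11}^{m-1})\alpha^{-(m-1)}\beta^{-(m-1)}$ ($m$ odd)
is derived from the values in Lemma~\ref{2-6}, yielding $\theta_{\omega}(x_{11}^{2N}+\nu x_{12}^{2N})=\omega^{2N}\alpha^{-2N}$ for part (1), and part (2) is settled by the single value $\theta_{\omega}(S(x_{ij}))=\delta_{ij}\omega^{-1}\beta^{-2}$. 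Your closed form $\theta_{\omega}(x_{11}^{s})=\omega^{s}\alpha^{-2\lfloor s^{2}/4\rfloor}\beta^{-2\lfloor (s-1)^{2}/4\rfloor}$ satisfies exactly this recursion, and your endpoint values agree with the paper's (note $\omega^{2N-1}\beta^{2N-2}=\omega^{-1}\beta^{-2}$ once $\omega^{2N}=\alpha^{2N}$ and $(\alpha\beta)^{2N}=1$), so your necessity argument in (1) and your part (2) are the paper's computation repackaged through the uniqueness principle; the flip symmetry and off-diagonal vanishing you use are likewise sound.

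The genuine difference is the sufficiency half of (1). The paper descends $\theta_{\omega}$ by checking that it annihilates the coideal $J_{NL}^{\nu \lambda}$ generator by generator; you instead factor $\theta_{\omega}=p_{\omega\beta}\ast\theta_{\beta^{-1}}$ and descend each factor, identifying $\theta_{\beta^{-1}}$ with the Drinfeld element $\varUpsilon_{\alpha\beta}$ of Lemma~\ref{4-9}. This is attractive --- it is precisely the braided-bialgebra shadow of the paper's alternative proof of Theorem~\ref{2-2} via Corollary~\ref{4-4} and Proposition~\ref{4-8} --- but the step you flag and postpone is the one delicate point, so let me say what it requires. To conclude that $\theta_{\beta^{-1}}$ kills $\ker \pi$ you must prove $\theta_{\beta^{-1}}=\varUpsilon_{\alpha\beta}\circ\pi$, and the uniqueness principle must be run through (CR2) alone: the scalar axioms (CR2), (CR3) hold for $\varUpsilon_{\alpha\beta}$ on $A_{NL}^{\nu\lambda}$ automatically, being the duals of (DE2), (DE3) in Lemma~\ref{properties_DE} (no semisimplicity needed), and, being scalar identities, they pull back along the surjection $\pi$. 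The $B$-valued axiom (CR1), by contrast, does \emph{not} pull back from $A_{NL}^{\nu\lambda}$ to $B$ (a discrepancy would only be visible modulo $\ker\pi$), and Corollary~\ref{4-4} should not be invoked here, since its semisimplicity and cosemisimplicity hypotheses are not assumed in this lemma. With that verification written out, together with the routine check that convolution by the central group-like $p_{c}$ preserves (CR1)--(CR3), your proof is complete and somewhat more structural than the paper's.
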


\par 
\begin{lem}\label{2-12}
The Yang-Baxter form $\tau _{\gamma \delta }^{\lambda }$ on $C$ given in Theorem~\ref{Suzuki_braiding} (2) can be extended to a braiding of the bialgebra $B^{(\lambda )} =\mathcal{T}(C)/\langle I^{(\lambda )} \rangle $, where 
\par \medskip \centerline{$I^{(\lambda )}=\boldsymbol{k}(X_{11}X_{22}-X_{22}X_{11})
+\boldsymbol{k}(X_{12}X_{21}-\lambda X_{21}X_{12})
+\kern-1em \sum\limits_{i-j\not\equiv l-m\kern0.2em (\text{mod}\kern0.2em 2)}\kern-1em \boldsymbol{k}(X_{ij}X_{lm})$.}
\par \noindent 
We denote this braiding of $B^{(\lambda )} $ by the same symbol $\tau _{\gamma \delta }^{\lambda }$. 
For an element  $\omega \in \boldsymbol{k}^{\times } $, the $\boldsymbol{k}$-linear functional  $\theta _{\omega }:C\longrightarrow \boldsymbol{k}$ by the same formula in Lemma~\ref{2-9} can be extended to a coribbon element of the braided bialgebra $(B^{(\lambda )} ,\tau _{\gamma \delta }^{\lambda })$. 
We denote the coribbon element by the same symbol $\theta _{\omega }$. 
Suppose that $\gamma $ and $\delta $ satisfy $\gamma ^2=\delta ^2,\ \gamma ^{2N}=1$. Then, 
\par 
$(1)$ $\theta _{\omega }$ induces a coribbon element of the braided bialgebra $(A_{N2}^{\nu \lambda } , \tau _{\gamma \delta }^{\lambda })$ if and only if $\omega ^{2N}=1$. 
\par 
$(2)$ for $\omega \in \boldsymbol{k}$ with $\omega ^{2N}=1$,  
$\theta _{\omega }\circ S=\theta _{\omega }$ if and only if $\omega =\pm \gamma ^{-1}$. \qed 
\end{lem}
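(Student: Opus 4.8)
The plan is to follow the pattern of Lemma~\ref{2-9}, working first over the quadratic bialgebra $B^{(\lambda)}$ and only afterwards descending to $A_{N2}^{\nu\lambda}$. The Yang--Baxter form $\tau_{\gamma\delta}^{\lambda}$ on $C$ extends uniquely to $\mathcal{T}(C)$ by (B2) and (B3); to obtain a braiding of $B^{(\lambda)}$ I would check that this extension is compatible with the quadratic generators of $I^{(\lambda)}$, which is exactly the quadratic-bialgebra universality underlying Suzuki's construction recalled before Theorem~\ref{Suzuki_braiding}, so that it factors through the quotient. I would then define $\theta_{\omega}$ on generators by $\theta_{\omega}(x_{ij})=\delta_{ij}\omega$, extend it to $B^{(\lambda)}$ by imposing (CR3) and applying (CR2) recursively on word length, and verify (CR1)--(CR2) throughout; the only inputs are the values of $(\tau_{\gamma\delta}^{\lambda})^{\pm1}$ on powers recorded in Lemma~\ref{2-10}.

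For part (1) the core is an explicit evaluation of $\theta_{\omega}$ on powers of the generators. In $B^{(\lambda)}$ the parity relations force $\Delta(x_{11}^{m})=x_{11}^{m}\otimes x_{11}^{m}+x_{12}^{m}\otimes x_{21}^{m}$, and feeding the associated threefold coproduct into (CR2) together with Lemma~\ref{2-10} should yield the recursion $\theta_{\omega}(x_{11}^{m})=\gamma^{-2(m-1)}\omega\,\theta_{\omega}(x_{11}^{m-1})$, hence $\theta_{\omega}(x_{11}^{m})=\omega^{m}\gamma^{-m(m-1)}$; the same computation, with the diagonal support of $\tau^{\lambda}$ annihilating every cross term, gives $\theta_{\omega}(x_{12}^{m})=0$ for all $m$. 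Since $\gamma^{2N}=1$ erases the $\gamma$-exponent at $m=2N$, this gives $\theta_{\omega}(x_{11}^{2N}+\nu x_{12}^{2N})=\omega^{2N}$. Realizing $A_{N2}^{\nu\lambda}$ as $B^{(\lambda)}/\langle x_{11}^{2}-x_{22}^{2},\ x_{12}^{2}-x_{21}^{2},\ x_{11}^{2N}+\nu x_{12}^{2N}-1\rangle$, descent of $\theta_{\omega}$ requires it to vanish on this ideal; the first two relations impose nothing, because $\theta_{\omega}(x_{11}^{2})=\theta_{\omega}(x_{22}^{2})=\gamma^{-2}\omega^{2}$ and $\theta_{\omega}(x_{12}^{2})=\theta_{\omega}(x_{21}^{2})=0$ for every $\omega$, while the last forces $\theta_{\omega}(x_{11}^{2N}+\nu x_{12}^{2N})=\theta_{\omega}(1)=1$, i.e. $\omega^{2N}=1$. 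For the converse I would verify that this value condition makes $\theta_{\omega}$ constant on the fibres of $B^{(\lambda)}\to A_{N2}^{\nu\lambda}$, reducing well-definedness on the whole ideal to the generating relations by means of the comatrix coproduct and (CR1)--(CR2).

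For part (2), since $\sigma_{\gamma\delta}^{\lambda}$ is $S$-invariant (dual to Lemma~\ref{properties_DE}(3)), $\theta_{\omega}\circ S$ is again a coribbon element, and such elements are determined by their values on the generators $x_{ij}$ through (CR2)--(CR3); hence it is enough to evaluate on $x_{11}$. From $S(x_{11})=x_{11}^{4N-1}$ and the closed form above, reducing exponents modulo $2N$ using $\omega^{2N}=1$ and $\gamma^{2N}=1$ gives $\theta_{\omega}(S(x_{11}))=\omega^{4N-1}\gamma^{-(4N-1)(4N-2)}=\omega^{-1}\gamma^{-2}$, so $(\theta_{\omega}\circ S)(x_{ij})=\delta_{ij}\,\omega^{-1}\gamma^{-2}$ and therefore $\theta_{\omega}\circ S=\theta_{\omega^{-1}\gamma^{-2}}$. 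Thus $\theta_{\omega}\circ S=\theta_{\omega}$ if and only if $\omega^{-1}\gamma^{-2}=\omega$, that is $\omega=\pm\gamma^{-1}$. As a cross-check of sufficiency one may invoke Corollary~\ref{4-4} and Lemma~\ref{4-9}(2): $\omega=\gamma^{-1}$ is the Drinfeld element $\varUpsilon_{\gamma\delta}^{\lambda}$, which satisfies (CR4), and $\theta_{-\gamma^{-1}}$ coincides with $p\varUpsilon_{\gamma\delta}^{\lambda}$ for the central group-like $p\in G(A^{\ast})\cap Z(A^{\ast})$ given by $p(x_{ij})=-\delta_{ij}$ (so $p^{2}=\varepsilon$), whence $\theta_{-\gamma^{-1}}\in\mathrm{CRib}(A_{N2}^{\nu\lambda},\tau_{\gamma\delta}^{\lambda})$ also satisfies (CR4).

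I expect the main obstacle to be the power evaluations in part (1): justifying the closed forms $\theta_{\omega}(x_{11}^{m})=\omega^{m}\gamma^{-m(m-1)}$ and $\theta_{\omega}(x_{12}^{m})=0$, since each application of (CR2) opens a threefold coproduct and one must argue that only the fully diagonal terms survive the $\tau^{\lambda}$-factors of Lemma~\ref{2-10}. The secondary technical point is the converse half of part (1), upgrading vanishing on the generating relations to genuine well-definedness on the quotient, which I would handle through the comatrix structure of the coproduct rather than a brute-force ideal computation.
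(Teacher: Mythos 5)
Your proposal is correct and takes essentially the same route as the paper's own (corrected) argument, which is recorded in the Appendix corrigenda to \cite{W2}: the (CR2)-recursion $\theta_{\omega}(x_{11}^{m})=\omega\gamma^{-2(m-1)}\theta_{\omega}(x_{11}^{m-1})$ giving the closed form $\omega^{m}\gamma^{-m(m-1)}$ and hence the condition $\omega^{2N}=1$ for part (1), and the evaluation $\theta_{\omega}(S(x_{ij}))=\delta_{ij}\omega^{-1}\gamma^{-2}$ forcing $\omega=\pm\gamma^{-1}$ for part (2). Your closing cross-check via Corollary~\ref{4-4} and Lemma~\ref{4-9}(2) simply mirrors the paper's alternative proof of Theorem~\ref{2-2}, so nothing essentially new or problematic is introduced.
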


\par \smallskip 
Combining Lemmas~\ref{2-9} and \ref{2-12}, 
we have the following correct version of \cite[Theorem 5]{W2}. 

\par 
\begin{thm}\label{2-2} 
Let $\boldsymbol{k}$ be an algebraically closed field whose characteristic does not divide $2NL$. 
For each element $\omega \in \boldsymbol{k}$, 
let $\theta _{\omega }: C\longrightarrow \boldsymbol{k}$ be the $\boldsymbol{k}$-linear functional defined by the same formula in Lemma~\ref{2-9}. 
Then for a braided Hopf algebra $(A_{NL}^{\nu \lambda }, \sigma )$ the following statements  hold. 
\par 
$(1)$ Let $\alpha , \beta$ be elements in $\boldsymbol{k}^{\times}$ satisfying  $(\alpha \beta )^N=\nu ,\ (\alpha \beta ^{-1})^L=\lambda $. 
Then, 
$\theta _{\omega }$ is extended to a coribbon element of the braided bialgebra $(A_{NL}^{\nu \lambda },\ \sigma _{\alpha \beta })$ if and only if 
$\omega^{2N}=\alpha ^{2N}$, 
and any coribbon element of the braided  bialgebra $(A_{NL}^{\nu \lambda },\ \sigma _{\alpha \beta })$ is given by the form $\theta _{\omega }$. 
In addition, 
 $\theta _{\omega }$ is a  coribbon element of the  braided Hopf algebra  $(A_{NL}^{\nu \lambda },\ \sigma _{\alpha \beta })$ 
if and only if $\omega =\pm \beta^{-1}$. 
Therefore, there are exactly two coribbon elements of the 
braided Hopf algebra $(A_{NL}^{\nu \lambda },\ \sigma _{\alpha \beta })$. 
\par 
$(2)$ Let $\gamma , \delta $ be elements in $\boldsymbol{k}^{\times}$ satisfying  $\gamma ^2=\delta ^2,\ \gamma ^{2N}=1$. 
Then, 
$\theta _{\omega }$ is extended to a coribbon element of the braided bialgebra $(A_{N2}^{\nu \lambda },\tau _{\gamma \delta }^{\lambda})$ if and only if   
$\omega ^{2N}=1$, 
and any coribbon element of the braided  bialgebra $(A_{N2}^{\nu \lambda },\tau _{\gamma \delta }^{\lambda })$ is given by the form $\theta _{\omega}$. 
In addition, 
 $\theta _{\omega }$ is a  coribbon element of the  braided Hopf algebra  $(A_{N2}^{\nu \lambda },\ \tau _{\gamma \delta }^{\lambda})$ 
if and only if $\omega =\pm \gamma^{-1}$. 
Therefore, there are exactly two coribbon elements of the 
braided Hopf algebra $(A_{N2}^{\nu \lambda },\ \tau _{\gamma \delta }^{\lambda})$. 
\end{thm}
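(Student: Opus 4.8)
The plan is to derive Theorem~\ref{2-2} directly from the two preceding lemmas, Lemma~\ref{2-9} and Lemma~\ref{2-12}, which already establish that $\theta_\omega$ extends to a coribbon element of the braided \emph{bialgebra} $(A_{NL}^{\nu\lambda},\sigma_{\alpha\beta})$ exactly when $\omega^{2N}=\alpha^{2N}$ (resp.\ $\omega^{2N}=1$ in the $\tau$-case), and that the Hopf-algebra condition $\theta_\omega\circ S=\theta_\omega$ forces $\omega=\pm\beta^{-1}$ (resp.\ $\omega=\pm\gamma^{-1}$). Thus the two stated equivalences in each part are immediate quotations of those lemmas; what genuinely remains for the theorem is the completeness claim, namely that \emph{every} coribbon element of the braided bialgebra has the form $\theta_\omega$, and the final counting assertion that there are exactly two Hopf-algebra coribbon elements.

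For the completeness claim I would use Corollary~\ref{4-4}, which applies because $\boldsymbol{k}$ is algebraically closed with $\mathrm{ch}(\boldsymbol{k})\nmid 2NL$, so $A_{NL}^{\nu\lambda}$ is semisimple and cosemisimple by Suzuki's theorem. Corollary~\ref{4-4} tells us that $\mathrm{CRib}(A,\sigma)=\{\,p\,\varUpsilon \mid p\in G(A^{\ast})\cap Z(A^{\ast}),\ p^2=\varepsilon\,\}$, where $\varUpsilon$ is the Drinfeld element computed in Lemma~\ref{4-9} as $\varUpsilon_{\alpha\beta}(x_{ij})=\delta_{ij}\beta^{-1}$ (resp.\ $\delta_{ij}\gamma^{-1}$). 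So the strategy is to identify the group-like central idempotent-squared elements $p\in G(A^{\ast})\cap Z(A^{\ast})$ with $p^2=\varepsilon$, and to check that each product $p\,\varUpsilon_{\alpha\beta}$ is again of the form $\theta_\omega$. Concretely, I would show that any such $p$ acts on the generators by $p(x_{ij})=\delta_{ij}\mu$ for some scalar $\mu$ with $\mu^{2N}=1$ (the order condition coming from $p$ being group-like on the $4N$ grouplikes of $A_{NL}^{\nu\lambda}$ listed in \eqref{eq_grouplikes_of_SuzukiHopf}, together with centrality). Then $p\,\varUpsilon_{\alpha\beta}=\theta_{\mu\beta^{-1}}$, and $\omega=\mu\beta^{-1}$ satisfies $\omega^{2N}=\beta^{-2N}=\alpha^{2N}$ precisely because $(\alpha\beta)^N=\nu$ forces $(\alpha\beta)^{2N}=1$; this closes the loop with the bialgebra condition. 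An entirely parallel computation handles the $\tau_{\gamma\delta}^\lambda$ case, where $\omega^{2N}=1$ matches $\gamma^{2N}=1$.

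The final counting statement then follows formally: among the coribbon \emph{bialgebra} elements $\{\theta_\omega : \omega^{2N}=\alpha^{2N}\}$, the extra Hopf condition $\theta_\omega\circ S=\theta_\omega$ restricts to $\omega=\pm\beta^{-1}$ by part of Lemma~\ref{2-9}(2); since $\mathrm{ch}(\boldsymbol{k})\neq 2$ these two values are distinct, giving exactly two elements. The same holds with $\beta^{-1}$ replaced by $\gamma^{-1}$ in part (2). I would present parts (1) and (2) in parallel, proving (1) in full and remarking that (2) is obtained by substituting $\tau_{\gamma\delta}^\lambda$ for $\sigma_{\alpha\beta}$, $\gamma$ for $\beta$, and invoking Lemmas~\ref{2-12} and~\ref{4-9}(2) in place of their $\sigma$-analogues.

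I expect the main obstacle to be the completeness argument, i.e.\ verifying that the central group-like elements $p$ with $p^2=\varepsilon$ are exactly those sending $x_{ij}\mapsto\delta_{ij}\mu$, so that no coribbon element lies outside the one-parameter family $\theta_\omega$. This requires pinning down $G(A^{\ast})\cap Z(A^{\ast})$ using the explicit comodule/grouplike data in \eqref{eq_grouplikes_of_SuzukiHopf} and the centrality description in the Remark following Corollary~\ref{4-4}; the diagonal-action conclusion is plausible from the structure of $\varUpsilon_{\alpha\beta}$ but must be checked against the noncommutativity of $A_{NL}^{\nu\lambda}$. Everything else is a bookkeeping consolidation of results already in hand.
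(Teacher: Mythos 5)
Your reduction of the two ``if and only if'' statements to Lemmas~\ref{2-9} and \ref{2-12} is fine, and your use of Corollary~\ref{4-4} together with Proposition~\ref{4-8} and Lemma~\ref{4-9} does correctly recover the two Hopf-algebra coribbon elements $\theta_{\pm\beta^{-1}}$ (resp.\ $\theta_{\pm\gamma^{-1}}$) --- this is exactly the paper's ``alternative proof'' of the theorem. The genuine gap is in your treatment of the completeness claim. That claim quantifies over coribbon elements of the braided \emph{bialgebra} $(A_{NL}^{\nu\lambda},\sigma_{\alpha\beta})$, i.e.\ functionals satisfying only (CR1)--(CR3), whereas $\text{CRib}(A,\sigma)$ in Corollary~\ref{4-4} is, by its derivation from Lemma~\ref{4-1} and Proposition~\ref{4-3}, the set of coribbon elements of the braided \emph{Hopf} algebra, i.e.\ those also satisfying (CR4) $\theta\circ S=\theta$; the condition $p^2=\varepsilon$ encodes precisely that extra axiom. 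The two sets genuinely differ: by the theorem the bialgebra coribbon elements are $\{\theta_\omega : \omega^{2N}=\alpha^{2N}\}$, which has $2N$ elements, while $\text{CRib}$ has only $2$. So verifying that each $p\varUpsilon_{\alpha\beta}$ with $p\in\text{Sph}((A_{NL}^{\nu\lambda})^{\ast})$ has the form $\theta_\omega$ proves something strictly weaker than ``no bialgebra coribbon element lies outside the family $\theta_\omega$'': Corollary~\ref{4-4} simply never enumerates the elements your completeness claim is about.

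The paper closes this gap by an elementary direct argument that your plan is missing. For an arbitrary coribbon element $\theta$ of the braided bialgebra, apply axiom (CR1) to $\Delta(x_{1j})=x_{11}\otimes x_{1j}+x_{12}\otimes x_{2j}$ ($j=1,2$) and use the linear independence of $x_{11},\ x_{12},\ x_{21}=\nu x_{12}^{2N}x_{21},\ x_{22}=x_{11}^{2N}x_{22}$ in $A_{NL}^{\nu\lambda}$ to deduce $\theta(x_{12})=\theta(x_{21})=0$ and $\theta(x_{11})=\theta(x_{22})=:\omega$, with $\omega\neq 0$ by convolution invertibility. Since $A_{NL}^{\nu\lambda}$ is generated by $C$ as an algebra and (CR2), (B2), (B3) determine $\theta$ on products from its values on $C$, this forces $\theta=\theta_\omega$ (i.e.\ $\theta=\tilde{\theta}_\omega\circ\pi$ in the paper's notation), after which Lemma~\ref{2-9}(1) gives $\omega^{2N}=\alpha^{2N}$. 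If you prefer to stay on the dual side, you would instead need a \emph{bialgebra} analogue of Corollary~\ref{4-4}: every bialgebra coribbon element equals $p\varUpsilon$ with $p\in G(A^{\ast})\cap Z(A^{\ast})$ but with no condition $p^2=\varepsilon$, and you would then have to determine \emph{all} central group-likes $p_\omega$, $\omega^{2N}=1$, of $(A_{NL}^{\nu\lambda})^{\ast}$, not just the two spherical ones singled out by Proposition~\ref{4-8}. The same correction applies verbatim to part (2).
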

\begin{proof} 
(1) Let $\theta $ be a coribbon element of the braided bialgebra $(B/\langle J_{NL}^{\nu \lambda }\rangle $, $\sigma _{\alpha \beta })$. 
By $\Delta (x_{1j})=x_{11}\otimes x_{1j}+x_{12}\otimes x_{2j}$ for $j=1,2$ 
and (CR1) we have 
\par \smallskip \centerline{
$\begin{cases}
\theta (x_{11})x_{11}+\theta (x_{12})x_{21}=\theta (x_{11})x_{11}+\theta (x_{21})x_{12}, \\ 
\theta (x_{11})x_{12}+\theta (x_{12})x_{22}=\theta (x_{12})x_{11}+\theta (x_{22})x_{12}. \end{cases}$}
\par \smallskip \noindent 
Since 
$x_{11},\ x_{12},\ x_{21}=\nu x_{12}^{2N}x_{21},\ x_{22}=x_{11}^{2N}x_{22}$
are linearly independent,  
it follows that 
$\theta (x_{12})=\theta (x_{21})=0,\ \theta (x_{11})=\theta (x_{22})$. 
One can set 
$\theta (x_{11})=\theta (x_{22})=\omega $ for some $\omega \not= 0$ since 
$\theta $ is convolution-invertible. 
So, $\theta $ is obtained by  
$\theta =\tilde{\theta }_{\omega }\circ \pi $, 
where 
$\pi :B\longrightarrow B/\langle J_{NL}^{\nu \lambda }\rangle $ is the natural projection, and $\tilde{\theta }_{\omega }$ is the coribbon element of the braided bialgebra $(B, \sigma _{\alpha \beta })$ determined by $\tilde{\theta }_{\omega }(x_{ij})=\delta _{ij}\omega $ for all $i,j=1,2$. 
Thus, by Lemma~\ref{2-9}(1) it is required that $\omega =\xi \alpha \ $ for some $\xi^{2N}=1$. 
It can be easily shown that the converse is true. 
By Lemma~\ref{2-9}(2) a necessary and sufficient condition for that 
$\theta _{\omega }$ is a coribbon element of  $(A_{NL}^{\nu \lambda}, \sigma _{\alpha \beta})$ is $\omega =\pm \beta ^{-1}$. 
\par 
(2) Let $\theta $ be a coribbon element of the braided bialgebra $(B/\langle J_{N2}^{\nu \lambda }\rangle ,\ \tau _{\gamma \delta  }^{\lambda })$. 
As the same manner with the proof of Part (1) 
we see that 
$\theta (x_{12})=\theta (x_{21})=0,\ \theta (x_{11})=\theta (x_{22})$, 
and $\theta (x_{11})=\theta (x_{22})=\omega $ is not $0$. 
Hence $\theta $  is given by 
$\theta =\tilde{\theta }_{\omega }\circ \pi '$, 
where 
$\pi ':B^{(\lambda )}\longrightarrow B^{(\lambda )}/\langle J_N^{(\nu )}\rangle \cong B/\langle J_{N2}^{\nu \lambda }\rangle $ is the natural projection, $J_N^{(\nu )}=\boldsymbol{k}(x_{11}^2-x_{22}^2)+\boldsymbol{k}(x_{12}^2-x_{21}^2)+\boldsymbol{k}(x_{11}^{2N}+\nu x_{12}^{2N}-1)$, and 
$\tilde{\theta }_{\omega }$ is the coribbon element of the braided bialgebra $(B^{(\lambda )}, \tau _{\gamma \delta }^{\lambda })$. 
Thus, $\omega ^{2N}=1$ by Lemma~\ref{2-12}, and $\theta $ is needed to be the form in Part (2).  
The converse is also true. 
Furthermore, by Lemma~\ref{2-12}(2) a necessary and sufficient condition for that 
$\theta _{\omega }$ is a coribbon element of $(A_{N2}^{\nu \lambda}, \tau _{\gamma \delta }^{\lambda })$ is $\omega =\pm \gamma ^{-1}$. 
\end{proof} 

\par \smallskip 
To determine the coribbon elements of a 
braided Hopf algebra $(A_{NL}^{\nu \lambda}, \sigma )$ one can apply Corollary~\ref{4-4}. 
This fact gives us an alternative proof of Theorem~\ref{2-2} as follows. 
\par 
Suppose that $\boldsymbol{k}$ contains a $4NL$th root of  unity. 
If $L$ is odd, then 
$G((A_{NL}^{\nu \lambda})^{\ast})=
\{ \ p_{\omega },\ q_{\eta }\ \vert \ \omega , \eta \in \boldsymbol{k},\ \omega ^{2N}=1,\ \eta ^{2N}=\nu \ \} $. 
Here, 
$p_{\omega },\ q_{\eta } \in (A_{NL}^{\nu \lambda})^{\ast}$ are defined by 
$p_{\omega }(x_{ij})=\delta_{ij} \omega , \ q_{\eta }(x_{ij})=\delta _{i, j+1}\lambda ^j \eta $, 
 and  
the products between them are given by $p_{\omega }p_{\omega ^{\prime}}=p_{\omega \omega ^{\prime}},\ 
q_{\eta }q_{\eta  ^{\prime}}=p_{\lambda \eta \eta  ^{\prime}},\  
p_{\omega }q_{\eta }=q_{\eta }p_{\omega }=q_{\omega \eta }$. 
If $L$ is even, then  
\begin{align*}
G((A_{NL}^{\nu, -})^{\ast})
&=\{ \ p_{\omega , \epsilon }\ \vert \ \omega \in \boldsymbol{k},\ \omega ^{2N}=1,\ \epsilon =0,1\ \} ,\\ 
G((A_{NL}^{\nu , +})^{\ast})&=\{ \ p_{\omega , \epsilon },\ q_{\eta , \epsilon }\ \vert \ \omega , \eta \in \boldsymbol{k},\ \omega ^{2N}=1,\ \eta ^{2N}=\nu ,\ \epsilon =0,1\ \} .
\end{align*}
Here, 
$p_{\omega , \epsilon },\ q_{\eta , \epsilon } \in (A_{NL}^{\nu \lambda})^{\ast}$ are given by 
$p_{\omega , \epsilon }(x_{ij})=\delta_{ij}(-1)^{\epsilon (i-1)} \omega , \ 
q_{\eta , \epsilon }(x_{ij})=\delta _{i, j+1}(-1)^{\epsilon (i-1)} \eta $, 
and products between them are given by 
$p_{\omega , \epsilon }p_{\omega ^{\prime}, \epsilon ^{\prime}}= 
p_{\omega \omega ^{\prime}, \epsilon +\epsilon ^{\prime}}$,\  
$q_{\eta , \epsilon }q_{\eta  ^{\prime}, \epsilon ^{\prime}}=p_{(-1)^{\epsilon ^{\prime}}\eta \eta ^{\prime}, \epsilon +\epsilon ^{\prime}} , \ 
p_{\omega , \epsilon }q_{\eta , \epsilon ^{\prime}}=q_{\omega \eta , \epsilon +\epsilon ^{\prime}}, \ 
q_{\eta , \epsilon ^{\prime}}p_{\omega , \epsilon }=q_{(-1)^{\epsilon }\omega \eta , \epsilon +\epsilon ^{\prime}},$ 
where the indices of the right-hand sides are treated as modulo  $2$. 

\par 
\begin{prop}\label{4-8}
Suppose that $\boldsymbol{k}$ contains a $4NL$th root of unity. Then, 
$\text{Sph}((A_{NL}^{\nu \lambda})^{\ast})=\{\varepsilon ,\ p_{-1} \}$, 
where $\varepsilon $ is the counit of $A_{NL}^{\nu \lambda}$, and 
$p_{-1}$ is the algebra map defined by $p_{-1}(x_{ij})=-\delta _{ij}\ (i,j=1,2)$. 
\end{prop}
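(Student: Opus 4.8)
The plan is to read off the central group-like elements directly from the three families of $G((A_{NL}^{\nu\lambda})^{\ast})$ recalled above, and then impose $p^2=\varepsilon$. Write $A=A_{NL}^{\nu\lambda}$ for brevity. Since $\text{Sph}(A^{\ast})=\{\,p\in G(A^{\ast})\cap Z(A^{\ast})\mid p^2=\varepsilon\,\}$, and the unit of the convolution algebra $A^{\ast}$ is $\varepsilon=p_1$ (respectively $p_{1,0}$ when $L$ is even), it suffices to run through the families $\{p_\omega\},\{q_\eta\}$ (for $L$ odd) and $\{p_{\omega,\epsilon}\},\{q_{\eta,\epsilon}\}$ (for $L$ even) and decide in each case which elements are central, and which of the central ones square to $\varepsilon$.

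The key device for testing centrality is the pair of winding maps. For $p\in G(A^{\ast})$, set $\tau_p^{\ell}(a)=\sum p(a_{(1)})a_{(2)}$ and $\tau_p^{r}(a)=\sum p(a_{(2)})a_{(1)}$ for $a\in A$. A one-line computation using that $p$ is an algebra map shows each of $\tau_p^{\ell},\tau_p^{r}$ is an algebra endomorphism of $A$; and by the description of $Z(A^{\ast})$ in the Remark following Corollary~\ref{4-4}, one has $p\in Z(A^{\ast})$ if and only if $\tau_p^{\ell}=\tau_p^{r}$. Since $A$ is generated as an algebra by the $x_{ij}$, this equality of algebra maps need only be checked on the generators. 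Using $\Delta(x_{ij})=x_{i1}\otimes x_{1j}+x_{i2}\otimes x_{2j}$ I would record
\[
\tau_p^{\ell}(x_{ij})=p(x_{i1})x_{1j}+p(x_{i2})x_{2j},\qquad
\tau_p^{r}(x_{ij})=p(x_{1j})x_{i1}+p(x_{2j})x_{i2}.
\]

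From these formulas the analysis is immediate. A $q$-type element vanishes on $x_{11},x_{22}$ but satisfies $p(x_{12})\neq 0$ (indeed $p(x_{12})=\eta$ with $\eta^{2N}=\nu\neq0$); comparing $\tau_p^{\ell}(x_{11})=p(x_{12})x_{21}$ with $\tau_p^{r}(x_{11})=p(x_{21})x_{12}$ and using the linear independence of $x_{12},x_{21}$ (the comatrix generators embed in $A$) shows $\tau_p^{\ell}\neq\tau_p^{r}$, so no $q$-type element is central. A $p$-type element is diagonal, $p(x_{12})=p(x_{21})=0$, and comparison on $x_{12}$ gives $\tau_p^{\ell}(x_{12})=p(x_{11})x_{12}$, $\tau_p^{r}(x_{12})=p(x_{22})x_{12}$; hence $p$ is central if and only if $p(x_{11})=p(x_{22})$. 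For $L$ odd this holds for every $p_\omega$, while for $L$ even, where $p_{\omega,\epsilon}(x_{11})=\omega$ and $p_{\omega,\epsilon}(x_{22})=(-1)^{\epsilon}\omega$, it forces $\epsilon=0$. Thus the central group-like elements are exactly the $p_\omega$ with $\omega^{2N}=1$ for $L$ odd, and the $p_{\omega,0}$ with $\omega^{2N}=1$ for $L$ even.

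Finally I would impose $p^2=\varepsilon$. The multiplication rules give $p_\omega^2=p_{\omega^2}$ (respectively $p_{\omega,0}^2=p_{\omega^2,0}$) and $\varepsilon=p_1$ (respectively $p_{1,0}$), so $p^2=\varepsilon$ reduces to $\omega^2=1$, i.e. $\omega=\pm1$; both values satisfy $\omega^{2N}=1$, and $p_1=\varepsilon\neq p_{-1}$ because the characteristic of $\boldsymbol{k}$ is not $2$. This yields $\text{Sph}(A^{\ast})=\{\varepsilon,p_{-1}\}$ with $p_{-1}(x_{ij})=-\delta_{ij}$, as claimed. The only point needing care is the bookkeeping across the case split for $G(A^{\ast})$ (the three families, and the sign $\lambda$); the single structural step that makes everything routine afterwards is the reduction of centrality, via the winding endomorphisms $\tau_p^{\ell},\tau_p^{r}$, to an identity on the four generators $x_{ij}$.
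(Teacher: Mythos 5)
Your proof is correct and takes essentially the same route as the paper's: run through the classified group-like elements of $(A_{NL}^{\nu\lambda})^{\ast}$, characterize centrality of a group-like $p$ by $p(x_{11})=p(x_{22})$, $p(x_{12})=p(x_{21})=0$, and then impose $p^{2}=\varepsilon$ to force $\omega=\pm 1$. The only difference is one of detail: the paper asserts the centrality criterion outright, whereas you justify it via the winding endomorphisms $\tau_p^{\ell},\tau_p^{r}$ reduced to the generators $x_{ij}$, and you handle the $L$ even families $p_{\omega,\epsilon},q_{\eta,\epsilon}$ explicitly.
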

\begin{proof}
An element $p\in G((A_{NL}^{\nu \lambda})^{\ast})$ belongs to the center of $(A_{NL}^{\nu \lambda})^{\ast}$ if and only if $p(x_{11})=p(x_{22}),\ p(x_{12})=p(x_{21})=0$. 
Thus, whereas $p_{\omega }\in Z((A_{NL}^{\nu \lambda})^{\ast})$, $q_{\eta }\not \in Z((A_{NL}^{\nu \lambda})^{\ast})$ since $q_{\eta }(x_{12})=\eta \not= 0$. 
Furthermore, it follows from $p_{\omega }^2=p_{\omega ^2}$ that  
$p_{\omega }^2=\varepsilon $. This implies that $\omega ^2=1$, that is, $\omega =\pm 1$. 
Since $p_1=\varepsilon $, it follows that 
$\text{Sph}((A_{NL}^{\nu \lambda})^{\ast})=\{ \varepsilon ,\ p_{-1} \}$. 
\end{proof}

\par \medskip \noindent 
{\bf Alternative proof of Theorem~\ref{2-2}}.  
By Corollary~\ref{4-4} and Proposition~\ref{4-8} we have 
$\text{CRib}(A_{NL}^{\nu \lambda}, \sigma _{\alpha \beta })=\{  \varUpsilon_{\alpha \beta},  p_{-1}\varUpsilon_{\alpha \beta}\} $. 
Here, $\varUpsilon_{\alpha \beta}$ is the Drinfeld element of $(A_{NL}^{\nu \lambda}, \sigma _{\alpha \beta })$, and it is given by  
$ \varUpsilon_{\alpha \beta}(x_{ij})=\delta _{ij}\beta ^{-1}$ by Lemma~\ref{4-9}(1). 
Thus, (1) is proved. 
Similarly, it can be shown that $\text{CRib}(A_{NL}^{\nu \lambda}, \tau _{\gamma \delta }^{\lambda})=\{ \varUpsilon_{\gamma \delta}^{\lambda},  p_{-1}\varUpsilon_{\gamma \delta}^{\lambda} \} $, and hence (2) is also proved.  
\qed 

\par 
\section{Polynomial invariants for duals of Suzuki's braided Hopf algebras} 
In this section we assume that $N\geq 1,\ L\geq 2$, $\lambda , \nu =\pm 1$, and $\boldsymbol{k}$ is an algebraically closed field which contains a $4NL$th root of unity. 
We also assume that $\alpha , \beta \in \boldsymbol{k}^{\times}$ satisfy $(\alpha \beta )^N=\nu ,\ (\alpha \beta ^{-1})^L=\lambda$, and $\gamma , \delta \in \boldsymbol{k}^{\times}$ satisfy $\gamma ^2=\delta ^2,\ \gamma ^{2N}=1$. 
\par 
By Lemma~\ref{4-2} we have: 

\par  
\begin{lem}\label{4-0}
$(1)$ Let us consider the coribbon elements $\varUpsilon_{\alpha , \beta }$ and $\overline{\varUpsilon}_{\alpha , \beta }:=p_{-1}\varUpsilon_{\alpha , \beta }$ of the braided Hopf algebra $(A_{NL}^{\nu \lambda }, \sigma _{\alpha , \beta})$. 
\par 
$(i)$ for the simple right $A_{NL}^{\nu \lambda }$-comodule $\boldsymbol{k}g\ (g\in G(A_{NL}^{\nu \lambda }))$
\begin{align*}
&\xi _{\varUpsilon_{\alpha , \beta }}(\boldsymbol{k}g)
=\underline{\dim}_{\sigma _{\alpha \beta}} \boldsymbol{k}g \\
&\ =
\begin{cases}
\xi _{\overline{\varUpsilon}_{\alpha , \beta}}(\boldsymbol{k}g) =(\alpha \beta )^{-2s^2} & \text{if $g=x_{11}^{2s}\pm x_{12}^{2s}$}, \\[0.1cm]  
(-1)^L\xi _{\overline{\varUpsilon}_{\alpha , \beta}}(\boldsymbol{k}g) =(\alpha \beta )^{-2s^2-2sL-L^2}\alpha ^{L^2} & \text{if $g=x_{11}^{2s+1}\chi _{22}^{L-1}\pm \sqrt{\lambda}x_{12}^{2s+1}\chi_{21}^{L-1}$}.
\end{cases}  
\end{align*}
\indent 
$(ii)$ for the simple right $A_{NL}^{\nu \lambda }$-comodule  
$V_{st}=\boldsymbol{k}x_{11}^{2s}\chi_{22}^t+\boldsymbol{k}x_{12}^{2s}\chi_{21}^t$
$$\xi _{\varUpsilon_{\alpha , \beta }}(V_{st}) 
=\dfrac{\underline{\dim}_{\sigma _{\alpha \beta}} V_{st}}{2}=(\alpha \beta )^{-2s^2-2st-t^2}\alpha ^{t^2}
=(-1)^{t}\xi _{\overline{\varUpsilon}_{\alpha , \beta}}(V_{st}).$$
\indent 
$(2)$ Let us consider the coribbon elements $\varUpsilon_{\gamma \delta }^{\lambda}$ and $\overline{\varUpsilon}_{\gamma \delta }^{\lambda}:=p_{-1}\varUpsilon_{\gamma \delta }^{\lambda}$ of the braided Hopf algebra $(A_{N2}^{\nu \lambda }, \tau_{\gamma \delta }^{\lambda})$. 
\par 
$(i)$ for the simple right $A_{N2}^{\nu \lambda }$-comodule $\boldsymbol{k}g\ (g\in G(A_{N2}^{\nu \lambda }))$
$$\xi _{\varUpsilon_{\gamma \delta}^{\lambda }}(\boldsymbol{k}g)
=\xi _{\overline{\varUpsilon}_{\gamma \delta}^{\lambda }}(\boldsymbol{k}g)
=\underline{\dim}_{\tau _{\gamma \delta}^{\lambda }} \boldsymbol{k}g 
=
\begin{cases}
\gamma ^{-4s^2} & \text{if $g=x_{11}^{2s}\pm x_{12}^{2s}$}, \\[0.1cm]  
\gamma ^{-4(s+1)^2}\lambda 
 & \text{if $g=x_{11}^{2s+1}x_{22}\pm \sqrt{\lambda}x_{12}^{2s+1}x_{21}$}.
\end{cases} 
$$
\indent 
$(ii)$ for the simple right $A_{N2}^{\nu \lambda }$-comodule 
$V_{s1}=\boldsymbol{k}x_{11}^{2s}x_{22}+\boldsymbol{k}x_{12}^{2s}x_{21}$
$$\xi _{\varUpsilon_{\gamma \delta}^{\lambda }}(V_{s1}) 
=-\xi _{\overline{\varUpsilon}_{\gamma \delta}^{\lambda }}(V_{s1})
=\dfrac{\underline{\dim}_{\tau _{\gamma \delta}^{\lambda }} V_{s1}}{2}
=\gamma ^{-(2s+1)^2}.$$
\end{lem}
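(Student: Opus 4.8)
The plan is to evaluate every scalar through the character formula of Lemma~\ref{4-2}(2). Each coribbon element is a ribbon element of the quasitriangular Hopf algebra $((A_{NL}^{\nu \lambda})^{\ast},\sigma)$, hence lies in $Z((A_{NL}^{\nu \lambda})^{\ast})$ by (Rib1); moreover every comodule in the list is absolutely simple of dimension $1$ or $2$, and these dimensions are invertible in $\boldsymbol{k}$ since $2$ is invertible. So Lemma~\ref{4-2}(2) applies and gives $\xi_v(M)=v(\text{ch}(M))/\dim M$ for each coribbon element $v$. The first task is therefore to read off the characters: for a group-like $g$ one has $\text{ch}(\boldsymbol{k}g)=g$, and from the two-term comultiplications of $x_{11}^{2s}\chi_{22}^t$ and $x_{12}^{2s}\chi_{21}^t$ recorded in Section 3 one obtains $\text{ch}(V_{st})=x_{11}^{2s}\chi_{22}^t+x_{22}^{2s}\chi_{11}^t$. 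By centrality $v$ acts as a scalar on $V_{st}$, so the off-diagonal coefficient $x_{21}^{2s}\chi_{12}^t$ contributes nothing and the two diagonal coefficients agree; thus $\xi_v(V_{st})=v(x_{11}^{2s}\chi_{22}^t)$.

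Next I would dispose of the barred elements cheaply. Writing $\overline{\varUpsilon}_{\alpha\beta}=p_{-1}\varUpsilon_{\alpha\beta}$ as a convolution product and using that $p_{-1}$ is the algebra map with $p_{-1}(x_{ij})=-\delta_{ij}$, the off-diagonal summands in the comultiplication of each character are annihilated by $p_{-1}$, so $\overline{\varUpsilon}_{\alpha\beta}(\text{ch}(M))=p_{-1}(\text{ch}(M))\,\varUpsilon_{\alpha\beta}(\text{ch}(M))$. Evaluating $p_{-1}$ on $x_{11}^{2s+1}\chi_{22}^{L-1}\pm\sqrt{\lambda}\,x_{12}^{2s+1}\chi_{21}^{L-1}$ gives $(-1)^{L}$ and on $x_{11}^{2s}\chi_{22}^t$ gives $(-1)^t$, which is exactly the origin of the sign factors $(-1)^L$ and $(-1)^t$ in the statement; the same computation handles $\overline{\varUpsilon}_{\gamma\delta}^{\lambda}$ in Part (2). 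This reduces everything to evaluating the single Drinfeld functional $\varUpsilon_{\alpha\beta}$ (respectively $\varUpsilon_{\gamma\delta}^{\lambda}$) on the characters.

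For the core evaluation I would use the Drinfeld formula $\varUpsilon_{\alpha\beta}(a)=\sum\sigma_{\alpha\beta}(a_{(2)},S(a_{(1)}))$ of Corollary~\ref{4-4} together with the braiding values of Lemma~\ref{2-6}. For any group-like $g$ the one-term comultiplication gives $\varUpsilon_{\alpha\beta}(g)=\sigma_{\alpha\beta}(g,S(g))=\sigma_{\alpha\beta}(g,g^{-1})=\sigma_{\alpha\beta}(g,g)^{-1}$, using $\sigma_{\alpha\beta}(g,1)=\varepsilon(g)=1$ and the group-like multiplicativity of $\sigma_{\alpha\beta}$ coming from (B3); for $g=x_{11}^{2s}\pm x_{12}^{2s}=(x_{11}^2\pm x_{12}^2)^s$ (the mixed monomials drop out because $x_{11}x_{12}=0$) this reduces to the single value $\sigma_{\alpha\beta}(x_{11}^2,x_{11}^2)=(\alpha\beta)^2$, whence $\sigma_{\alpha\beta}(g,g)=(\alpha\beta)^{2s^2}$ and $\varUpsilon_{\alpha\beta}(g)=(\alpha\beta)^{-2s^2}$. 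For $V_{st}$ one feeds the two-term comultiplication of $x_{11}^{2s}\chi_{22}^t$ into the Drinfeld formula, getting $\sigma_{\alpha\beta}(x_{11}^{2s}\chi_{22}^t,S(x_{11}^{2s}\chi_{22}^t))+\sigma_{\alpha\beta}(x_{21}^{2s}\chi_{12}^t,S(x_{12}^{2s}\chi_{21}^t))$, then applies the antipode formulas of Section 3 and expands each $\sigma_{\alpha\beta}$ through (B2), (B3) down to the single-generator values of Lemma~\ref{2-6}. The main obstacle is precisely this expansion: one must track how the $\alpha,\beta$-exponents accumulate over all pairwise braidings of the $2s+t$ letters, reduce the large antipode exponents via $x_{22}^2=x_{11}^2$ and $x_{11}^{2N}+\nu x_{12}^{2N}=1$, and finally simplify using $(\alpha\beta)^N=\nu$ and $(\alpha\beta^{-1})^L=\lambda$; it is this bookkeeping that produces the quadratic exponent $(\alpha\beta)^{-2s^2-2st-t^2}\alpha^{t^2}$ (and, for the odd group-likes, $(\alpha\beta)^{-2s^2-2sL-L^2}\alpha^{L^2}$).

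Finally, Part (2) is carried out verbatim with $\tau_{\gamma\delta}^{\lambda}$ in place of $\sigma_{\alpha\beta}$, using Lemma~\ref{2-10} for the braiding values and Lemma~\ref{4-9}(2) for the Drinfeld functional, noting that $L=2$ forces $t=1$ in $V_{s1}$. The stated identifications with categorical dimensions are then immediate: since each $\varUpsilon$ is the Drinfeld element, Lemma~\ref{3-10} gives $\xi_{\varUpsilon}(M)=\underline{\dim}_{\sigma}M/\dim M$, yielding $\xi_{\varUpsilon_{\alpha\beta}}(\boldsymbol{k}g)=\underline{\dim}_{\sigma_{\alpha\beta}}\boldsymbol{k}g$ and $\xi_{\varUpsilon_{\alpha\beta}}(V_{st})=\underline{\dim}_{\sigma_{\alpha\beta}}V_{st}/2$, and likewise for $\tau_{\gamma\delta}^{\lambda}$.
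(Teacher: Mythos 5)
Your reduction framework is sound and, for the barred elements, is essentially the paper's own argument: coribbon elements are ribbon elements of the dual, hence central by (Rib1), so Lemma~\ref{4-2}(2) applies; $\text{ch}(\boldsymbol{k}g)=g$ and $\text{ch}(V_{st})=x_{11}^{2s}\chi_{22}^t+x_{22}^{2s}\chi_{11}^t$ (the paper writes the latter as $x_{11}^{2s+1}\chi_{22}^{t-1}+x_{11}^{2s}\chi_{22}^t$ using $x_{22}^2=x_{11}^2$); and the $p_{-1}$-sign pulled out of the convolution is exactly where the factors $(-1)^L$ and $(-1)^t$ come from. One slip: the identity $\overline{\varUpsilon}_{\alpha\beta}(\text{ch}(M))=p_{-1}(\text{ch}(M))\,\varUpsilon_{\alpha\beta}(\text{ch}(M))$ is false as literally written for the two-dimensional comodules, since $p_{-1}(\text{ch}(V_{st}))=2(-1)^t$; the correct statement, and clearly the one you intend, is that every monomial of the character carries the same sign $(-1)^t$ under $p_{-1}$, so this common sign factors out of $\sum p_{-1}\bigl((\cdot)_{(1)}\bigr)\varUpsilon\bigl((\cdot)_{(2)}\bigr)$. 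Your closing use of Lemma~\ref{3-10} to identify $\xi_{\varUpsilon}(M)$ with $\underline{\dim}_{\sigma}M/\dim M$ also matches the paper.

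The genuine gap is the core evaluation. The content of the lemma is the explicit scalars $(\alpha\beta)^{-2s^2-2st-t^2}\alpha^{t^2}$, $(\alpha\beta)^{-2s^2-2sL-L^2}\alpha^{L^2}$, $\gamma^{-4s^2}$, $\gamma^{-4(s+1)^2}\lambda$ and $\gamma^{-(2s+1)^2}$, and except for the even group-likes $g=x_{11}^{2s}\pm x_{12}^{2s}$ --- where your computation $\varUpsilon_{\alpha\beta}(g)=\sigma_{\alpha\beta}(g,g)^{-1}=(\alpha\beta)^{-2s^2}$ is complete and correct --- you never derive them: you say that expanding through (B2), (B3) and reducing by $x_{22}^2=x_{11}^2$, $x_{11}^{2N}+\nu x_{12}^{2N}=1$, $(\alpha\beta)^N=\nu$, $(\alpha\beta^{-1})^L=\lambda$ ``produces'' the quadratic exponents, which is precisely the assertion that needs proof. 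That bookkeeping (an induction on the word length $2s+t$) is the nontrivial part of the statement, both for the odd group-likes and for $V_{st}$, and again for all of Part~(2). The paper avoids it entirely: it invokes Lemma~\ref{3-10} and then imports the categorical dimensions $\underline{\dim}_{\sigma_{\alpha\beta}}M$, $\underline{\dim}_{\tau^{\lambda}_{\gamma\delta}}M$ from the earlier computation in \cite[Lemma 5.9(1)]{W}, noting only that $\nu^t$ must be removed from that formula. So to complete your proof you must either actually carry out the expansion you describe, or cite the known values as the paper does; as it stands, the stated exponents are assumed rather than proved.
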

\begin{proof}
In the case of $\xi _{\varUpsilon_{\alpha , \beta }}$ and $\xi _{\varUpsilon_{\gamma , \delta }^{\lambda}}$, 
by Lemma~\ref{3-10}, 
$\xi _{\varUpsilon_{\alpha , \beta }}(M)$ \newline 
$= \underline{\dim}_{\sigma _{\alpha \beta}} M/\dim M$,\ 
$\xi _{\varUpsilon_{\gamma , \delta }^{\lambda}}(M)
= \underline{\dim}_{\tau _{\gamma , \delta }^{\lambda}} M/ \dim M$ for any absolutely simple right $A_{NL}^{\nu \lambda}$-comodule $M$. 
The values $\underline{\dim}_{\sigma _{\alpha \beta}} M$,\ $\underline{\dim}_{\tau _{\gamma , \delta }^{\lambda}} M$ have already computed in \cite[Lemma 5.9(1)]{W} although it needs to remove $\nu ^t$ from that formula. 
So, we obtain the formulas for $\xi _{\varUpsilon_{\alpha , \beta }}$ and $\xi _{\varUpsilon_{\gamma , \delta }^{\lambda}}$  in the proposition. 
Other equations can be  derived as follows.  
\par 
(1) (i) First, we note that $\text{ch}(\boldsymbol{k}g)=g$ for $g\in G(A_{NL}^{\nu \lambda })$. 
By Lemma~\ref{4-2}(2), 
if $g=x_{11}^{2s}\pm x_{12}^{2s} $, then 
$\xi _{\overline{\varUpsilon}_{\alpha , \beta}}(\boldsymbol{k}g)
=\overline{\varUpsilon}_{\alpha , \beta}(g) =p_{-1}(g)\varUpsilon _{\alpha , \beta}(g) =\xi _{\varUpsilon_{\alpha , \beta}}(\boldsymbol{k}g)$, 
and similarly if $g=x_{11}^{2s+1}\chi _{22}^{L-1}\pm \sqrt{\lambda}x_{12}^{2s+1}\chi_{21}^{L-1}$, then 
$\xi _{\overline{\varUpsilon}_{\alpha , \beta}}(\boldsymbol{k}g)
=(-1)^{L}\xi _{\varUpsilon_{\alpha , \beta}}(\boldsymbol{k}g)$. 
\par 
(ii) Since $\text{ch}(V_{st})=x_{11}^{2s+1}\chi_{22}^{t-1}+x_{11}^{2s}\chi_{22}^t$, we have 
\begin{align*}
\xi _{\overline{\varUpsilon}_{\alpha , \beta}}(V_{st})
&=\dfrac{\overline{\varUpsilon}_{\alpha , \beta}(\text{ch}(V_{st}))}{2}
=\dfrac{\bigl( p_{-1}\varUpsilon_{\alpha , \beta}\bigr) ( x_{11}^{2s+1}\chi_{22}^{t-1}+x_{11}^{2s}\chi_{22}^t)}{2} \\ 
&=\dfrac{p_{-1}(x_{11}^{2s+1}\chi_{22}^{t-1})\varUpsilon_{\alpha , \beta}(x_{11}^{2s+1}\chi_{22}^{t-1})
+p_{-1}(x_{12}^{2s+1}\chi_{21}^{t-1})\varUpsilon_{\alpha , \beta}(x_{21}^{2s+1}\chi_{12}^{t-1})}{2} \\ 
& \quad +\dfrac{p_{-1}(x_{11}^{2s}\chi_{22}^t)\varUpsilon_{\alpha , \beta}(x_{11}^{2s}\chi_{22}^t)
+p_{-1}(x_{12}^{2s}\chi_{21}^t)\varUpsilon_{\alpha , \beta}(x_{21}^{2s}\chi_{12}^t)}{2} \displaybreak[0]\\ 
&=(-1)^{2s+t}\dfrac{\varUpsilon_{\alpha , \beta}(\text{ch}(V_{st}))}{2}
=(-1)^{2s+t}\xi _{\varUpsilon_{\alpha , \beta}}(V_{st})
=(-1)^{t}\dfrac{\underline{\dim}_{\sigma _{\alpha ,\beta }}V_{st}}{2}. 
\end{align*}
\par 
By a similar computation we have the equations of (2). 
\end{proof} 

\par \smallskip 
By Lemma~\ref{4-0} we have: 

\par 
\begin{thm} 
$(1)$ For $i\in \{ 0,1,\ldots , N-1\}$ and $j\in \{ 0,1,\ldots , L-1\}$, set 
\par \medskip \centerline{
$\alpha _{ij\pm }:=\pm \omega ^{L(2i+\frac{1-\nu }{2})+N(2j+\frac{1-\lambda }{2})},\ 
\beta _{ij\pm }:=\pm \omega ^{L(2i+\frac{1-\nu }{2})-N(2j+\frac{1-\lambda }{2})}$.}
\par \smallskip \noindent 
Then $I_{\nu \lambda}:=\{ \ (\alpha ,\beta )\in \boldsymbol{k}\times \boldsymbol{k}\ \vert \ (\alpha \beta )^N=\nu ,\ (\alpha \beta ^{-1})^L=\lambda \ \} $ is 
represented as 
$I_{\nu \lambda}=\{ \ (\alpha _{ij+}, \beta _{ij+}),\ (\alpha _{ij-}, \beta _{ij-})\ |\ i=0,1,\ldots ,N-1,\ j=0,1,\ldots ,L-1\ \}$, 
and by setting $\epsilon _P=\epsilon ^{\prime}_P=1,\ \epsilon _Q=(-1)^L,\ \epsilon ^{\prime}_Q=(-1)^t$ and $X=P,Q$ we have
\begin{align*}
X_{(A_{NL}^{\nu \lambda })^{\ast}, \sigma _{\alpha _{ij\pm }, \beta _{ij\pm }}}^{(1)}(x)
&=\prod\limits_{s=1}^N\bigl( x- \omega ^{-4s^2L(2i+\frac{1-\nu }{2})}\bigr) \\ 
&\qquad \quad \cdot \bigl( x- \epsilon _X(\pm 1)^{L(j+1)}\omega ^{-(2s+L)^2L(2i+\frac{1-\nu }{2})+L^2N\frac{1-\lambda }{2}} \bigr)  ,\\ 
X_{(A_{NL}^{\nu \lambda })^{\ast}, \sigma _{\alpha _{ij\pm }, \beta _{ij\pm }}}^{(2)}(x)
&=\prod\limits_{s=0}^{N-1}\prod\limits_{t=1}^{L-1}( x-\epsilon ^{\prime}_X(\pm 1)^t\omega ^{-L(2s+t)^2(2i+\frac{1-\nu }{2})+t^2N(2j+\frac{1-\lambda }{2})}).
\end{align*}
\indent  
$(2)$ For $i\in \{ 0,1,\ldots , 2N-1\}$, define $\gamma _{i\pm}, \delta _{i\pm}\in \boldsymbol{k}^{\times}$ by $(\gamma _{i\pm}, \delta _{i\pm})=(\omega ^{4i},\ \pm \omega ^{4i})$. 
Then $J:=\{ \ (\gamma , \delta )\in \boldsymbol{k}\times \boldsymbol{k}\ \vert \ \gamma ^2=\delta ^2,\ \gamma ^{2N}=1 \ \} $ is represented as 
$J=\{\ (\gamma _{i+}, \delta _{i+}),\ (\gamma _{i-}, \delta _{i-})\ | \ i=0,1,\ldots ,2N-1\  \} $, 
and 
\begin{align*}
\hspace{0.3cm} P_{(A_{N2}^{\nu \lambda })^{\ast}, \tau _{\gamma _{i\pm}, \delta _{i\pm}}^{\lambda}}^{(1)}(x)
&=Q_{(A_{N2}^{\nu \lambda })^{\ast}, \tau _{\gamma _{i\pm}, \delta _{i\pm}}^{\lambda}}^{(1)}(x)
=\prod\limits_{s=1}^N(x-\omega ^{-16is^2})(x-\omega ^{-16i(s+1)^2}\lambda )  ,\displaybreak[0]\\ 
P_{(A_{N2}^{\nu \lambda })^{\ast}, \tau _{\gamma _{i\pm}, \delta _{i\pm}}^{\lambda}}^{(2)}(x)
&=\prod\limits_{s=0}^{N-1}( x-\omega ^{-4i(2s+1)^2}) ,\displaybreak[0]\\ 
Q_{(A_{N2}^{\nu \lambda })^{\ast}, \tau _{\gamma _{i\pm}, \delta _{i\pm}}^{\lambda}}^{(2)}(x)
&=\prod\limits_{s=0}^{N-1}( x+\omega ^{-4i(2s+1)^2}). 
\end{align*}
\end{thm}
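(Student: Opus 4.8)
The plan is to compute each polynomial directly from the two ribbon elements of the dual together with the explicit scalars of Lemma~\ref{4-0}. I would first fix the primitive $4NL$th root of unity $\omega$ of the statement and verify the two parametrizations. Writing $a=2i+\frac{1-\nu}{2}$ and $b=2j+\frac{1-\lambda}{2}$, one has $\alpha_{ij\pm}\beta_{ij\pm}=\omega^{2La}$ and $\alpha_{ij\pm}\beta_{ij\pm}^{-1}=\omega^{2Nb}$, so $(\alpha_{ij\pm}\beta_{ij\pm})^{N}=(\omega^{2NL})^{a}=(-1)^{a}=\nu$ and $(\alpha_{ij\pm}\beta_{ij\pm}^{-1})^{L}=(\omega^{2NL})^{b}=(-1)^{b}=\lambda$, using $\omega^{2NL}=-1$; thus every listed pair lies in $I_{\nu\lambda}$. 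Primitivity of $\omega$ and the ranges $0\le i\le N-1$, $0\le j\le L-1$ give injectivity of $(i,j,\pm)\mapsto(\alpha,\beta)$, and a degree count ($N$ values of $\alpha\beta$, $L$ of $\alpha\beta^{-1}$, two square roots for $\alpha$) shows $|I_{\nu\lambda}|=2NL$, so the listing is exhaustive. The set $J$ is treated identically with $L=2$ and $\omega^{8N}=1$.

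The conceptual core is to read $P$ and $Q$ off the two ribbon elements. By Theorem~\ref{2-2} (equivalently Corollary~\ref{4-4} with Proposition~\ref{4-8}) we have $\text{Rib}((A_{NL}^{\nu\lambda})^{\ast},\sigma_{\alpha\beta})=\{\varUpsilon_{\alpha\beta},\overline{\varUpsilon}_{\alpha\beta}\}$, and similarly for $\tau_{\gamma\delta}^{\lambda}$; moreover the absolutely simple $(A_{NL}^{\nu\lambda})^{\ast}$-modules are exactly the comodules $\boldsymbol{k}g$ in dimension one and the $V_{st}$ in dimension two. Unwinding the definition, $\tilde P^{(d)}=\prod_{v}\prod_i(x-\xi_v(M_i))$ with $v$ ranging over the two ribbon elements, and since $\varUpsilon$ is the Drinfeld element, Lemma~\ref{3-10} gives $\xi_{\varUpsilon}(M_i)=\underline{\dim}_\sigma M_i/\dim M_i$, so the $v=\varUpsilon$ factor is precisely $P^{(d)}$. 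Hence $Q^{(d)}=\tilde P^{(d)}/P^{(d)}=\prod_i(x-\xi_{\overline{\varUpsilon}}(M_i))$: $P$ records the categorical dimensions and $Q$ the scalars of the second ribbon element. The constants $\epsilon_X,\epsilon_X'$ are then exactly the comparison factors of Lemma~\ref{4-0} between $\xi_{\overline{\varUpsilon}}$ and $\xi_{\varUpsilon}$, namely $1$ on the first group-like family, $(-1)^L$ on the second, and $(-1)^t$ on $V_{st}$.

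It then remains to substitute. For each braiding I would run $d=1$ over the group-likes and $d=2$ over the $V_{st}$, evaluating $\xi_{\varUpsilon}$ and $\xi_{\overline{\varUpsilon}}$ by Lemma~\ref{4-0}: the two group-like families $x_{11}^{2s}\pm x_{12}^{2s}$ and $x_{11}^{2s+1}\chi_{22}^{L-1}\pm\sqrt{\lambda}\,x_{12}^{2s+1}\chi_{21}^{L-1}$ give the two linear factors of $X^{(1)}$, and the $V_{st}$ give $X^{(2)}$. Plugging in $(\alpha,\beta)=(\alpha_{ij\pm},\beta_{ij\pm})$ and collecting exponents, the $V_{st}$-scalar $(\alpha\beta)^{-2s^2-2st-t^2}\alpha^{t^2}$ has $\omega$-exponent $-La(2s+t)^2+Nb\,t^2$ via $(2s+t)^2=4s^2+4st+t^2$, while its sign $(\pm1)^{t^2}=(\pm1)^t$ (since $t^2\equiv t\pmod 2$) yields the displayed $(\pm1)^t$; the group-like exponents are obtained the same way. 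Part (2) follows from the even simpler substitution $\gamma=\omega^{4i}$, where on $V_{s1}$ the relation $\xi_{\overline{\varUpsilon}}=-\xi_{\varUpsilon}$ immediately turns $P^{(2)}(x)=\prod_s(x-\omega^{-4i(2s+1)^2})$ into $Q^{(2)}(x)=\prod_s(x+\omega^{-4i(2s+1)^2})$ and makes the group-like part sign-independent, so that $P^{(1)}=Q^{(1)}$ there.

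The step I expect to be the main obstacle is precisely the sign-and-exponent bookkeeping for the second group-like family in Part (1): there the scalar $(\alpha\beta)^{-2s^2-2sL-L^2}\alpha_{ij\pm}^{L^2}$ produces both a sign $(\pm1)^{L^2}=(\pm1)^{L}$ from $\alpha_{ij\pm}^{L^2}$ and, after reducing the surviving even multiples of $NL$ by $\omega^{2NL}=-1$, further signs of the shape $(-1)^{Lj}$ coming from the $2j$-part of $b$. Organizing these into the single compact prefactor $\epsilon_X(\pm1)^{L(j+1)}$ attached to the stated $\omega$-power (with only the $\frac{1-\lambda}{2}$-part of $b$ retained inside the exponent) is delicate and is where careful accounting is required; the remaining cases and the whole of $d=2$ are comparatively mechanical once this reconciliation is in place.
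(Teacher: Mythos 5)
Your proposal follows exactly the paper's own route: the paper's entire proof of this theorem is the one-line citation ``By Lemma~\ref{4-0} we have,'' and the scaffolding you assemble explicitly is precisely what that citation compresses --- the two coribbon elements $\varUpsilon$ and $\overline{\varUpsilon}=p_{-1}\varUpsilon$ from Theorem~\ref{2-2} (equivalently Corollary~\ref{4-4} with Proposition~\ref{4-8}), the identification of the $\varUpsilon$-factor of $\tilde{P}^{(d)}$ with $P^{(d)}$ via Lemma~\ref{3-10} so that $Q^{(d)}=\prod_i\bigl(x-\xi_{\overline{\varUpsilon}}(M_i)\bigr)$ by Proposition~\ref{4-11}, and then substitution of the scalars of Lemma~\ref{4-0}. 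Your verification of the parametrizations of $I_{\nu\lambda}$ and $J$, and your computation for $X^{(2)}$ (exponent $-La(2s+t)^2+Nbt^2$ with sign $(\pm 1)^{t^2}=(\pm 1)^t$, where $a=2i+\tfrac{1-\nu}{2}$, $b=2j+\tfrac{1-\lambda}{2}$), are correct and match the statement.

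There is, however, a genuine problem at the one step you defer as ``delicate bookkeeping,'' and it cannot be completed as you claim. Faithful substitution into the second group-like scalar of Lemma~\ref{4-0} gives $(\alpha\beta)^{-2s^2-2sL-L^2}\alpha^{L^2}=(\pm 1)^{L^2}(-1)^{Lj}\,\omega^{-La(2s+L)^2+NL^2\frac{1-\lambda}{2}}$, the factor $(-1)^{Lj}$ coming from $\omega^{2jNL^2}=(\omega^{2NL})^{Lj}=(-1)^{Lj}$; so the prefactor is $(\pm 1)^{L}(-1)^{Lj}$, whereas the theorem asserts $(\pm 1)^{L(j+1)}=(\pm 1)^{L}(\pm 1)^{Lj}$. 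These coincide for the minus sign always, and for the plus sign only when $Lj$ is even. For the plus sign with $L$ and $j$ both odd they differ: e.g.\ $N=1$, $L=3$, $\nu=\lambda=+$, $i=0$, $j=1$, so $(\alpha,\beta)=(\omega^2,\omega^{-2})$ with $\omega$ a primitive $12$th root of unity; Lemma~\ref{4-0} gives $\xi_{\varUpsilon}=\alpha^{9}=\omega^{18}=-1$, i.e.\ the factor $x+1$, while the displayed formula gives $x-\omega^{0}=x-1$. Thus the reconciliation you assert is impossible; what you have uncovered is an internal inconsistency of the paper between Lemma~\ref{4-0} and the theorem (one of the two carries a sign typo), which your derivation --- being the paper's own --- inherits rather than creates. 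A smaller shared gloss: since the $\pm$ pairs of group-likes are non-isomorphic one-dimensional comodules with equal scalars, the definition of $P^{(1)}$ actually produces each linear factor of $X^{(1)}$ squared, a multiplicity neither you nor the stated formula records.
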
 

\par 
\begin{exam}\label{4-12}
Let $\omega \in \boldsymbol{k}$ be a primitive $8$th root of unity.  
For a braiding $\sigma $ of $H_8$ we set $P_{\sigma }^{(2)}(x):=P_{H_8, \sigma }^{(2)}(x)$. Then 
\begin{align*}
P_{\sigma _{\pm \omega , \pm \omega ^{-1}}}^{(2)}(x)=x\mp \omega ,\  & \quad 
P_{\sigma _{\pm \omega ^3, \pm \omega ^{-3}}}^{(2)}(x)=x\mp \omega ^3,\\      
P_{\tau _{1, \pm 1}^{-}}^{(2)}(x)=x-1,\  & \quad 
P_{\tau _{-1, \pm 1}^{-}}^{(2)}(x)=x+1. 
\end{align*}
It follows that all pairs of $(H_8, \sigma _{\omega , \omega ^{-1}})$,\ 
$(H_8, \sigma _{-\omega , -\omega ^{-1}}),\ 
(H_8, \sigma _{\omega ^3, \omega ^{-3}})$,\ \newline  
$(H_8, \sigma _{-\omega ^3, -\omega ^{-3}})$,\ 
$(H_8, \tau _{1, 1}^{-})$,\ 
$(H_8, \tau _{-1, 1}^{-})$ are not braided Morita equivalent. 
\end{exam}

\section{The braided Morita equivalence classes of $H_8$}
In this section we compute the automorphism group of the $8$-dimensional Kac-Paljutkin algebra $H_8$, and 
determine its braided Morita equivalence classes. 
\par 
Since the finite group $G_{12}$ is isomorphic to the dihedral group $D_8=\langle \ t,\ w\ \vert \ t^2=w^4=1,\ tw=w^{-1}t  \ \rangle $ of order $8$, 
it follows that $H_8$ is isomorphic to the group algebra  $\boldsymbol{k}D_8$ as an algebra. 
An algebra isomorphism 
$\varphi :\boldsymbol{k}D_8\longrightarrow H_8$ is given by 
$\varphi (t)=x_{12}+x_{22}, \ \varphi (w)=x_{11}x_{22}-x_{21}x_{12}$.
The induced Hopf algebra structure of  $\boldsymbol{k}D_8$ that    
$\varphi$ is a Hopf algebra map is as follows \cite{W}. 
\begin{align*}
\Delta (t)&=w^{-1}t\otimes e_1t+t\otimes e_0t,\ & 
\Delta (w)&=w\otimes e_0w+w^{-1}\otimes e_1w, \\ 
\varepsilon (t)& =1,\quad  & \varepsilon (w)&=1,\\ 
 S(t)& =(e_0-e_1w) t, \ &  
S(w)& =w, 
\end{align*}
where $e_0:=\frac{1+w^2}{2},\ e_1:=\frac{1-w^2}{2}$. 
They are central orthogonal idempotents, and satisfy
$
\Delta (e_0)=e_0\otimes e_0+e_1\otimes e_1,\  
\Delta (e_1)=e_0\otimes e_1+ e_1\otimes e_0$. 
Via the map $\varphi$ we identify $H_8=\boldsymbol{k}D_8$. 
Then 
$$x_{11}=\dfrac{w+w^{-1}}{2}t,\ \ 
x_{12}=\dfrac{1-w^2}{2}t,\ \ 
x_{21}=\dfrac{w^{-1}-w}{2}t,\ \ 
x_{22}=\dfrac{1+w^2}{2}t.$$
By \eqref{eq_grouplikes_of_SuzukiHopf} we see that the group-like elements of $H_8$ are given by 
\begin{equation*}
G(H_8)=\{ \ 1,\ w^2,\ w(e_0+\sqrt{-1}e_1),\ w(e_0-\sqrt{-1}e_1)\ \} \cong \mathbb{Z}/2\mathbb{Z}\oplus \mathbb{Z}/2\mathbb{Z}. 
\end{equation*}
We set 
$a:=w^2$ and $b:=w(e_0+\sqrt{-1}e_1)$. 
\par 
Let $f$ be a Hopf algebra automorphism on $H_8$. 
Then we see that 
\begin{align*}
f(e_i)
&=\frac{1}{2}\bigl(1+(-1)^if(a)\bigr) \ \ (i=0,1),\\ 
f(w^{\pm 1})&=\frac{1\mp \sqrt{-1}}{2}f(b)+\frac{1+\pm \sqrt{-1}}{2}f(ab), \\ 
f(e_0-e_1w)&=\frac{1}{2}\bigl(1+f(a)\bigr) +\frac{\sqrt{-1}}{2}\bigl( f(b)-f(ab)\bigr) . 
\end{align*}

Now, we write $x:=f(t)\in H_8$ as 
$x=\sum_{i,j=0,1} a_{ij}w^ie_j+\sum_{i,j=0,1} b_{ij} w^ie_jt$\ $(a_{ij},  b_{ij}\in \boldsymbol{k})$. 
Then, 
\begin{equation*}
\varepsilon (x)=1\ \ \Longleftrightarrow\ \ a_{00}+a_{10}+b_{00}+b_{10}=1,
\end{equation*}
\begin{equation*}
x^2=1,\ xw=w^{-1}x \ \ \Longleftrightarrow\ \ 
\begin{cases}
a_{01}=a_{11}=0, & a_{00}a_{10}+b_{00}b_{10}=0,\\ 
a_{00}^2+a_{10}^2+b_{00}^2+b_{10}^2=1,&  a_{00}b_{00}+a_{10}b_{10}=0,\\ 
b_{01}^2+b_{11}^2=1, & a_{10}b_{00}+a_{00}b_{10}=0.
\end{cases}
\end{equation*}

By solving the above equations,  
$x=f(t)$ is one of the following.

\begin{enumerate}
\item[(i)] $x=w^ie_0t^j+b_{01}e_1t+b_{11}we_1t\ \ (i, j=0,1)$
\item[(ii)] $x=\dfrac{1}{2}(e_0+we_0)t^j+\dfrac{(-1)^i}{2}(e_0-we_0)t^{j+1}+b_{01}e_1t+b_{11}we_1t \ \ (i, j=0,1)$,
\end{enumerate}

\noindent 
where $b_{01}^2+b_{11}^2=1$ is satisfied for all cases. 
In these $x$, we search $f$ so that $S(x)=f(e_0-e_1w)x$. 
Then we see that $f$ is identical on $G$, or coincides with $f_1$ on $G$ defined by $f_1(a)=a, f_1(b)=ab$.  
Furthermore, it can be shown that 
$\Delta (x)=\bigl( f(w^{-1})\otimes f(e_1)+1\otimes f(e_0)\bigr) (x\otimes x)$ is satisfied if and only if 
$x=e_0t\pm e_1t=t, w^2t$ for $f|_G=\text{id}_G$, and $x=we_0t\pm we_1t =w^{\pm 1}t$ for $f|_G=f_1$. 
In this way we have:

\noindent 
\begin{lem}
If $f$ is a Hopf algebra automorphism on $H_8$, then 
$f$ is one of the Hopf algebra automorphisms $\text{id}_{H_8}, f_+, f_-, f_{+-}:=f_+\circ f_-$, where $f_{\pm }$ are defined by 
$f_{\pm}(w)=w^{-1},\  f_{\pm}(t)=w^{\pm 1}t$. 
Therefore, the group $\text{Aut}(H_8)$ of the Hopf algebra automorphisms is 
\par \smallskip \centerline{\hspace{1.5cm} $\text{Aut}(H_8)=\{ \text{id}_{H_8},\ f_{+},\ f_{-},\ f_{+-} \} \cong \mathbb{Z}/2\mathbb{Z}\oplus \mathbb{Z}/2\mathbb{Z}.$\qed } 
\end{lem}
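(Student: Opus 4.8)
The plan is to exploit that any Hopf algebra automorphism $f$ of $H_8$ restricts to a group automorphism of $G(H_8)\cong \mathbb{Z}/2\mathbb{Z}\oplus\mathbb{Z}/2\mathbb{Z}$ and commutes with $\Delta$, $\varepsilon$ and $S$. Accordingly I would first determine $f$ on the group-like elements $\{1,a,b,ab\}$ (equivalently on $w$), and only then pin down the single remaining value $x:=f(t)$, since the structure maps show that $H_8$ is generated by $w$ and $t$.

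For the first step, I would single out $a=w^2$ as the \emph{unique} non-trivial central group-like element: a direct check gives $a\in Z(H_8)$, whereas $b=w(e_0+\sqrt{-1}e_1)$ and $ab$ fail to commute with $t$, because $t$ commutes with $e_0,e_1$ but conjugates $w$ to $w^{-1}$. Since $f$ preserves both the coalgebra structure and centrality, $f(a)=a$ is forced, so $f$ either fixes or interchanges $b$ and $ab$. Feeding this into $f(w)=\tfrac{1-\sqrt{-1}}{2}f(b)+\tfrac{1+\sqrt{-1}}{2}f(ab)$ yields exactly $f(w)=w$ or $f(w)=w^{-1}$. Applying $f$ to the defining relation $tw=w^{-1}t$ then gives, in \emph{both} cases, the single relation $xw=w^{-1}x$; together with $\varepsilon(x)=\varepsilon(t)=1$ and $x^2=f(t^2)=1$ this is a manageable (linear plus quadratic) system once $x$ is expanded in the basis $\{w^ie_j,\,w^ie_jt\}$, and solving it produces the finite candidate lists (i) and (ii).

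Finally I would cut the candidates down using the two remaining Hopf constraints. Imposing the antipode compatibility $S(x)=f(S(t))=f(e_0-e_1w)\,x$ eliminates all candidates except those for which $f|_{G(H_8)}$ is either $\mathrm{id}_G$ or the swap $f_1\colon a\mapsto a,\ b\mapsto ab$; then imposing $\Delta(x)=\bigl(f(w^{-1})\otimes f(e_1)+1\otimes f(e_0)\bigr)(x\otimes x)$ forces $x=t$ or $x=w^2t$ in the first case and $x=w^{\pm1}t$ in the second. These four values of $(f(w),f(t))$ are precisely $\mathrm{id}_{H_8}$, $f_{+-}$ and $f_{\pm}$; a short verification that each is a well-defined invertible Hopf map and that $f_{\pm}^2=f_{+-}^2=\mathrm{id}$ with $f_+f_-=f_{+-}$ identifies $\mathrm{Aut}(H_8)$ with the Klein four-group. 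The hard part will be purely computational bookkeeping: carefully solving the quadratic system for the eight coordinates of $x$ and then checking the comultiplication identity on each surviving candidate without losing or introducing spurious solutions; the conceptual inputs—that $a$ is the distinguished central group-like and that both cases collapse to $xw=w^{-1}x$—are what keep this bookkeeping finite.
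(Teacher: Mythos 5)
Your proposal is correct and follows essentially the same route as the paper: expand $x=f(t)$ in the basis $\{w^ie_j,\ w^ie_jt\}$, solve the system coming from $\varepsilon(x)=1$, $x^2=1$, $xw=w^{-1}x$, and then cut the resulting candidate lists down by the antipode condition $S(x)=f(e_0-e_1w)x$ and the comultiplication condition $\Delta(x)=\bigl(f(w^{-1})\otimes f(e_1)+1\otimes f(e_0)\bigr)(x\otimes x)$. Your one genuine addition---observing that $a=w^2$ is the unique non-trivial \emph{central} group-like, so $f(a)=a$ and hence $f(w)\in\{w,w^{-1}\}$---is precisely the justification the paper leaves implicit when it imposes $xw=w^{-1}x$, and it is a worthwhile clarification rather than a different method.
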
 

Since $f_{+}(x_{11})=x_{22},\   
f_{+}(x_{12})=-x_{21}$,\   
$f_{+}(x_{21})=-x_{12},\   
f_{+}(x_{22})=x_{11}$,  
it follows that $\tau_{1,\pm 1}^{-}\circ (f_{+}\otimes f_{+})=\tau_{1,\mp 1}^{-}$,  and this implies the following result. 

\par 
\begin{cor}\label{3-2} 
As braided Hopf algebras $(H_8, \tau_{1,1}^-)\cong (H_8, \tau_{1,-1}^-),\ 
(H_8, \tau_{-1,1}^-)$ $\cong (H_8, \tau_{-1,-1}^-)$. 
In particular, there are isomorphisms  
${}_{(H_8, \tau_{1,1}^-)}\mathbb{M}\cong {}_{(H_8, \tau_{1,-1}^-)}\mathbb{M}$ and 
${}_{(H_8, \tau_{-1,1}^-)}\mathbb{M}\cong {}_{(H_8, \tau_{-1,-1}^-)}\mathbb{M}$ 
as $\boldsymbol{k}$-linear braided monoidal categories. \qed 
\end{cor}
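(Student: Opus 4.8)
The plan is to read both claimed isomorphisms off the single Hopf algebra automorphism $f_{+}\in\text{Aut}(H_8)$ produced in the preceding lemma, by invoking the general principle that a braiding-preserving Hopf algebra isomorphism induces an equivalence of braided monoidal categories. Thus the corollary reduces to a transport-of-structure argument together with the braiding identity already recorded in the paragraph above it.

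First I would isolate the general mechanism. For a finite-dimensional Hopf algebra $A$ with a braiding $\sigma$ and a Hopf algebra automorphism $f\colon A\to A$, the functional $\sigma\circ(f\otimes f)$ again satisfies (B1)--(B3): these axioms involve only $\Delta$, $\varepsilon$ and the product, all preserved by $f$, and convolution-invertibility transports along $f$. Hence $f$ is an isomorphism of braided Hopf algebras from $(A,\sigma\circ(f\otimes f))$ to $(A,\sigma)$. Pushing a right $A$-comodule $(M,\rho)$ forward to the comodule with coaction $(\mathrm{id}\otimes f)\circ\rho$ then defines a strict monoidal autoequivalence of the category of finite-dimensional right $A$-comodules that carries the braiding associated to $\sigma\circ(f\otimes f)$ to the one associated to $\sigma$; equivalently, via the Remark in Section~2, one obtains a $\boldsymbol{k}$-linear braided monoidal equivalence of the corresponding module categories over the dual Hopf algebra.

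Second, I would specialize to $f=f_{+}$. Substituting the listed values $f_{+}(x_{11})=x_{22}$, $f_{+}(x_{22})=x_{11}$, $f_{+}(x_{12})=-x_{21}$, $f_{+}(x_{21})=-x_{12}$ into the right-hand table of Theorem~\ref{Suzuki_braiding}, and using that a braiding of $A_{N2}^{\nu\lambda}$ is determined by its values on $C$ through (B2),(B3), a direct comparison of the four nonzero entries gives $\tau_{\gamma\delta}^{-}\circ(f_{+}\otimes f_{+})=\tau_{\gamma,-\delta}^{-}$; the sign flip $\delta\mapsto-\delta$ is exactly the effect of $\lambda=-1$ interchanging the $(x_{11},x_{22})$ and $(x_{22},x_{11})$ slots. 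Setting $\gamma=1$ recovers the identity $\tau_{1,\pm1}^{-}\circ(f_{+}\otimes f_{+})=\tau_{1,\mp1}^{-}$ recorded above, and $\gamma=-1$ yields $\tau_{-1,\pm1}^{-}\circ(f_{+}\otimes f_{+})=\tau_{-1,\mp1}^{-}$ by the identical computation.

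Combining the two steps, $f_{+}$ realizes $(H_8,\tau_{1,1}^{-})\cong(H_8,\tau_{1,-1}^{-})$ and $(H_8,\tau_{-1,1}^{-})\cong(H_8,\tau_{-1,-1}^{-})$ as braided Hopf algebras, and the asserted equivalences of braided monoidal categories follow from the first step. I expect the only genuinely nonroutine point to be that first step: making precise the transport functor on comodules and checking that it is monoidal and intertwines the two braidings. Once that is in hand, the remainder is the mechanical table comparison of the previous paragraph.
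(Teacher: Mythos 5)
Your proposal is correct and follows essentially the same route as the paper: the paper's proof consists precisely of noting $f_{+}(x_{11})=x_{22}$, $f_{+}(x_{12})=-x_{21}$, $f_{+}(x_{21})=-x_{12}$, $f_{+}(x_{22})=x_{11}$, deducing $\tau_{1,\pm 1}^{-}\circ(f_{+}\otimes f_{+})=\tau_{1,\mp 1}^{-}$, and invoking (implicitly) the transport-of-structure principle you spell out in your first step. Your write-up is in fact slightly more complete, since you verify the general identity $\tau_{\gamma\delta}^{-}\circ(f_{+}\otimes f_{+})=\tau_{\gamma,-\delta}^{-}$ for both $\gamma=\pm 1$, whereas the paper records only the $\gamma=1$ case explicitly while using both.
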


\par 
By Corollary~\ref{3-2} and Example~\ref{4-12} we have:

\par 
\begin{thm}
Let $\boldsymbol{k}$ be an algebraically closed field whose characteristic is not $2$.  
For two braidings $\sigma , \sigma ^{\prime}$ of the $8$-dimensional Kac-Paljutkin algebra $H_8$ over 
$\boldsymbol{k}$, 
the braided Hopf algebras $(H_8, \sigma )$ and $(H_8, \sigma ^{\prime})$ are braided Morita equivalent if and only if one of the following is satisfied: 
\par \smallskip \centerline{$(1)$ $\sigma =\sigma ^{\prime}$,\quad 
$(2)$ $\{ \sigma , \sigma ^{\prime}\} =\{ \tau _{1, 1}^{-},\ \tau _{1, -1}^{-}\} $, \quad 
$(3)$ $\{ \sigma , \sigma ^{\prime}\} =\{ \tau _{-1, 1}^{-},\ \tau _{-1, -1}^{-}\} $.}
\par \smallskip \noindent 
Therefore, there are exactly $6$ braided Morita equivalence classes for $H_8$. 
\qed 
\end{thm}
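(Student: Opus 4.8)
The plan is to reduce the whole statement to the polynomial invariant $P^{(2)}$ computed in Example~\ref{4-12} together with the braided Hopf algebra isomorphisms recorded in Corollary~\ref{3-2}. First I would enumerate all braidings of $H_8=A_{12}^{+-}$ by specializing Theorem~\ref{Suzuki_braiding}(3) to $N=1$, $L=2$, $\nu=+1$, $\lambda=-1$. The braidings of the form $\sigma_{\alpha\beta}$ are those with $\alpha\beta=1$ and $(\alpha\beta^{-1})^2=-1$, i.e. $\beta=\alpha^{-1}$ and $\alpha^4=-1$; with $\omega$ a primitive $8$th root of unity these are exactly $\sigma_{\pm\omega,\pm\omega^{-1}}$ and $\sigma_{\pm\omega^3,\pm\omega^{-3}}$. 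The braidings of the form $\tau_{\gamma\delta}^{-}$ are those with $\gamma^2=\delta^2$ and $\gamma^2=1$, i.e. $\gamma,\delta\in\{1,-1\}$, giving $\tau_{1,1}^{-},\tau_{1,-1}^{-},\tau_{-1,1}^{-},\tau_{-1,-1}^{-}$. Hence $H_8$ has precisely eight braidings.

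Next I would read off the discriminating power of $P^{(2)}$. By Example~\ref{4-12} the eight values of $P_\sigma^{(2)}(x)$ are $x\mp\omega$, $x\mp\omega^3$, and, with multiplicity two each, $x-1$ and $x+1$. Since $\omega$ is a primitive $8$th root of unity, the six scalars $\omega,-\omega,\omega^3,-\omega^3,1,-1$ are pairwise distinct (the first four being the primitive $8$th roots, the last two the square roots of unity). Consequently the eight braidings split, according to the value of $P_\sigma^{(2)}(x)$, into exactly six level sets: the four singletons $\{\sigma_{\omega,\omega^{-1}}\}$, $\{\sigma_{-\omega,-\omega^{-1}}\}$, $\{\sigma_{\omega^3,\omega^{-3}}\}$, $\{\sigma_{-\omega^3,-\omega^{-3}}\}$, and the two pairs $\{\tau_{1,1}^{-},\tau_{1,-1}^{-}\}$ and $\{\tau_{-1,1}^{-},\tau_{-1,-1}^{-}\}$.

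For the ``only if'' direction I would invoke that $P^{(2)}$ is a braided Morita invariant (established in Section~2): if $(H_8,\sigma)$ and $(H_8,\sigma^{\prime})$ are braided Morita equivalent, then $P_\sigma^{(2)}(x)=P_{\sigma^{\prime}}^{(2)}(x)$, and by the table this forces either $\sigma=\sigma^{\prime}$ or $\{\sigma,\sigma^{\prime}\}$ to be one of the two pairs in (2) and (3). For the ``if'' direction, case (1) is trivial, and cases (2) and (3) follow from Corollary~\ref{3-2}, which already supplies the equivalences ${}_{(H_8,\tau_{1,1}^{-})}\mathbb{M}\cong{}_{(H_8,\tau_{1,-1}^{-})}\mathbb{M}$ and ${}_{(H_8,\tau_{-1,1}^{-})}\mathbb{M}\cong{}_{(H_8,\tau_{-1,-1}^{-})}\mathbb{M}$ as $\boldsymbol{k}$-linear braided monoidal categories. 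Combining the two directions, the braided Morita equivalence classes are exactly the six level sets found above, proving the final count.

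The main obstacle is that $P^{(2)}$ yields only a necessary condition, so it cannot by itself certify that the two $\tau$-pairs genuinely merge; that merging must be produced separately, and it is available solely because of the explicit automorphism $f_+\in\text{Aut}(H_8)$, which via Corollary~\ref{3-2} carries $\tau_{1,1}^{-}$ to $\tau_{1,-1}^{-}$ and $\tau_{-1,1}^{-}$ to $\tau_{-1,-1}^{-}$. Thus the crux is to verify that the six level sets of the invariant coincide exactly with the orbits of $\text{Aut}(H_8)$ acting on braidings: the four singleton level sets are automatically full equivalence classes because their $P^{(2)}$ already differ, so no invariant beyond $P^{(2)}$ and no automorphism argument is needed for them, and only the two pairs require the categorical isomorphisms of Corollary~\ref{3-2}.
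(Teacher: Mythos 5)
Your proposal is correct and follows essentially the same route as the paper: the paper's proof is precisely the combination of Example~\ref{4-12} (the braided Morita invariant $P^{(2)}$ separates the six candidate classes) with Corollary~\ref{3-2} (the automorphism $f_+$ identifies $\tau_{1,1}^{-}$ with $\tau_{1,-1}^{-}$ and $\tau_{-1,1}^{-}$ with $\tau_{-1,-1}^{-}$), which is exactly your two-step argument. Your explicit enumeration of the eight braidings from Theorem~\ref{Suzuki_braiding}(3) and the check that the six values $\pm\omega,\pm\omega^{3},\pm 1$ are pairwise distinct simply spell out what the paper leaves implicit.
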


\par \medskip \noindent 
{\bf Appendix: List of corrigenda in \cite{W2} with correct statements.}

\begin{enumerate}
\item[$\bullet$] p.333 in the abstract and p.334, l.6--7; the following sentence should be deleted: 
\par 
\colorbox[gray]{0.7}{As a consequence, we see that such a Hopf algebra has a coribbon }\newline 
\colorbox[gray]{0.7}{structure if and only if it is of Kac-Paljutkin type (see Theorem 5).}
\item[$\bullet$] p.339, the statements of Theorem 5 should be changed as follows. 
\par 
{\sc Theorem 5}. 
(1) The set of coribbon elements of the braided Hopf algebra $(A_{NL}^{\nu \lambda }, \sigma _{\alpha \beta })$ is $\{ \ \theta _{\beta^{-1}},\ \theta _{-\beta^{-1}} \} $. 
\par 
(2) The set of coribbon elements of the braided Hopf algebra $(A_{NL}^{\nu \lambda }, \tau _{\alpha \beta }^{\lambda })$ is $\{ \ \theta _{\beta^{-1}},\ \theta _{-\beta^{-1}} \} $. 
\par 
Here, $\theta _{\pm \beta ^{-1}}$ are the elements of $(A_{NL}^{\nu \lambda })^{\ast}$, that are determined by the condition (iii) in Definition 3 and the equations  
$\theta _{\pm \beta ^{-1}}(x_{ij})=\pm \delta _{ij}\beta ^{-1} \ (i,j=1,2)$.
\item[$\bullet$] p.340, the conclusion part of Lemma 8 (1) should be changed as follows: 
Then, $\theta _{\omega }$ induces a coribbon element of the bialgebra $(A_{NL}^{\nu \lambda } , \sigma _{\alpha \beta })$ if and only if $\omega ^{2N}=\alpha ^{2N}$. 
\item[$\bullet$] p.340, the symbols $\chi _1^L, \chi_2^L, \eta _1^L, \eta _2^L$ should be replaced by $\chi _{11}^L, \chi _{22}^L, \chi _{12}^L, \chi _{21}^L$, respectively, and \eqref{eq2-1} should be added. 
\item[$\bullet$] p.341, the parts from the fourth line to the 16th line should be modified as follows: 
\par 
If $m$ is even, then 
\begin{align*}
\sigma _{\alpha \beta }^{-1}(x_{12}^{m-1}, x_{12})
&=\sigma _{\alpha \beta }^{-1}(x_{21},x_{21}^{m-1})
=\alpha ^{-\frac{m}{2}}\beta ^{-(\frac{m}{2}-1)}, \\ 
\sigma _{\alpha \beta }^{-1}(x_{12}^{m-1}, x_{11})
&=\sigma _{\alpha \beta }^{-1}(x_{11}, x_{21}^{m-1})
=0. 
\end{align*}
If $m\geq 3$ is  odd, then 
\begin{align*}
\sigma _{\alpha \beta }^{-1}(x_{21}^{m-1}, x_{21})
&=\sigma _{\alpha \beta }^{-1}(x_{12},x_{12}^{m-1})=0,\\ 
\sigma _{\alpha \beta }^{-1}(x_{21}^{m-1}, x_{22})
&=\sigma _{\alpha \beta }^{-1}(x_{22}, x_{12}^{m-1})
=\alpha ^{-\frac{m-1}{2}}\beta ^{-\frac{m-1}{2}}.
\end{align*}
Hence, if $m$ is even, then 
\begin{align*}
\theta _{\omega }(x_{11}^{m})
&=\sigma _{\alpha \beta }^{-1}(x_{12}^{m-1},x_{12})\theta _{\omega }(x_{22}^{m-1})\theta _{\omega }(x_{22})\sigma _{\alpha \beta }^{-1}(x_{21},x_{21}^{m-1}) \\ 
&=\omega \tilde{\theta }_{\omega }(x_{22}^{m-1})(\alpha ^{-1})^{m}(\beta ^{-1})^{m-2}, 
\end{align*}
and if $m$ is odd, then 
\begin{align*}
\theta _{\omega }(x_{22}^{m})
&=\sigma _{\alpha \beta }^{-1}(x_{21}^{m-1},x_{21})\theta _{\omega }(x_{11}^{m-1})\theta _{\omega }(x_{11})\sigma _{\alpha \beta }^{-1}(x_{12},x_{12}^{m-1}) \\ 
&=\omega \tilde{\theta }_{\omega }(x_{11}^{m-1})(\alpha ^{-1})^{m-1}(\beta ^{-1})^{m-1} . 
\end{align*}
Thus, we have 
\begin{align*}
\theta _{\omega }(x_{11}^{2N}+\nu x_{12}^{2N})
&=\omega \alpha ^{-2N}\beta ^{-(2N-2)}\theta _{\omega }(x_{22}^{2N-1}) \\ 
&=\omega ^2(\alpha ^{-1})^{2N+(2N-2)}(\beta ^{-1})^{(2N-2)+(2N-2)}\tilde{\theta }_{\omega }(x_{11}^{2N-2}) \\
&=\cdots \cdots \\ 
&=\omega ^{2N}\alpha ^{-2N}. 
\end{align*}
It follows that 
\par \smallskip \centerline{$\mbox{(i)}\ \ \Longleftrightarrow  \ \ \omega ^{2N}=\alpha ^{2N}.$}
\item[$\bullet$] p.342, 
the equations in the 14th and 16th lines should be modified as follows, respectively: 
\par 
\par \centerline{$\tilde{\theta }_{\omega }(x_{ij}^2)
=(\tilde{\tau }_{\alpha \beta }^{\lambda })^{-1}(x_{ii},x_{ii})\tilde{\theta }_{\omega }(x_{ij})\tilde{\theta }_{\omega  }(x_{ij})(\tilde{\tau }_{\alpha \beta }^{\lambda })^{-1}(x_{jj},x_{jj})
=\tilde{\theta }_{\omega }(x_{ij})^2\text{\colorbox[gray]{0.7}{$\beta ^{-2}$}},$}
\par \centerline{$(\tau _{\alpha \beta }^{\lambda })^{-1}(x_{jj}^{m-1}, x_{jj})=(\tau _{\alpha \beta }^{\lambda })^{-1}(x_{jj},x_{jj}^{m-1})=\text{\colorbox[gray]{0.7}{$\alpha ^{-(m-1)}$}}$}
\item[$\bullet$] p.342, the equation \lq\lq $=\omega \gamma ^{2(m-1)}\theta _{\omega }(x_{11}^{m-1})\theta _{\omega }(x_{22}^{m})$" in the 19th line should be modified as 
\lq\lq $=\omega \alpha ^{-2(m-1)}\theta _{\omega }(x_{11}^{m-1}),$" and the equation in the 21st line should be modified as 
\lq\lq $= \omega \text{\colorbox[gray]{0.7}{$\alpha ^{-2(m-1)}$}}\theta _{\omega }(x_{22}^{m-1}), $"
\item[$\bullet$] p.342, the equations in the 25th and 27th lines should be modified as follows, respectively: 
\par 
\par \centerline{$\theta _{\omega }(x_{11}^{m})=\theta _{\omega }(x_{22}^{m})=\omega ^m\text{\colorbox[gray]{0.7}{$\alpha ^{-2((m-1)+(m-2)+\cdots +1)}$}}=\omega ^m\text{\colorbox[gray]{0.7}{$\alpha ^{-m(m-1)}$}}.$}
\par  \centerline{$\mbox{(i)}\ \ \Longleftrightarrow  \ \ \omega ^{2N}\text{\colorbox[gray]{0.7}{$\alpha ^{-2N(2N-1)}$}}=1\ \ \Longleftrightarrow  \ \ \omega ^{2N}=1.$}
\item[$\bullet$] p.342, in the fourth line from the bottom 
the sentence 
\lq\lq $(A_{NL}^{\nu \lambda }\rangle , \sigma _{\alpha \beta })$," 
should be modified as 
\lq\lq $(A_{NL}^{\nu \lambda }, \sigma _{\alpha \beta })$ as a braided bialgebra," 
\item[$\bullet$] p.343, l.9; the statement 
\lq\lq By Lemma 8, it follows that $N=1$ and $\omega =\pm \alpha $." 
should be corrected as follows: 
By Lemma 8, it follows that  $\omega ^{2N}=\alpha ^{2N}$. 
Since $\theta _{\omega }(S(x_{ij}))=\delta _{ij}\omega ^{-1}\beta ^{-2}$,  
the condition $\theta _{\omega }\circ S=\theta _{\omega }$ implies $\omega =\pm \beta ^{-1}$. 
\item[$\bullet$] p.343, the part from the 17th line to the 18th line should be modified below:  
Therefore, by Lemma 8, it follows that $\omega ^{2N}=1$. 
Since $\theta _{\omega }(S(x_{ij}))=\delta _{ij}\omega ^{-1}\beta ^{-2}$,  
the condition $\theta _{\omega }\circ S=\theta _{\omega }$ implies $\omega =\pm \beta ^{-1}$. 
\end{enumerate}

\end{document}